\title[Ample canonical heights for endomorphisms]
{Ample canonical heights for endomorphisms on projective varieties}
\author{Takahiro Shibata} 
\date{}
\keywords{canonical height, dynamical degree}
\subjclass[2010]{Primary 37P30, Secondary 14G05}
\address{Department of Mathematics, Graduate School of Science, 
Kyoto University, Kyoto 606-8502, Japan}
\email{tshibata@math.kyoto-u.ac.jp}
\DeclareMathOperator{\Supp}{Supp}
\DeclareMathOperator{\id}{id}
\DeclareMathOperator{\Hom}{Hom}
\DeclareMathOperator{\tor}{tor}
\DeclareMathOperator{\sHom}{\mathscr{H}\kern -.3pt \mathit{om}}
\DeclareMathOperator{\End}{End}
\DeclareMathOperator{\Aut}{Aut}
\DeclareMathOperator{\tors}{tor}
\DeclareMathOperator{\Preper}{Preper}
\DeclareMathOperator{\Per}{Per}
\newtheorem{thm}{Theorem}[section]
\newtheorem{lem}[thm]{Lemma}
\newtheorem{cor}[thm]{Corollary}
\newtheorem{prop}[thm]{Proposition}
\newtheorem{conj}[thm]{Conjecture}
\newtheorem{ques}[thm]{Question}
\theoremstyle{definition}
\newtheorem{defn}[thm]{Definition}
\newtheorem{rem}[thm]{Remark}
\newtheorem{ex}[thm]{Example}
\newtheorem*{ack}{Acknowledgments} 
\newtheorem*{notation}{Notation and Conventions}
\newtheorem{step}{Step}
\begin{document}

\begin{abstract}
We define an ``ample canonical height" for an endomorphism on a projective variety,
which is essentially a generalization of the canonical heights for polarized endomorphisms 
introduced by Call--Silverman.
We formulate a dynamical analogue of the Northcott finiteness theorem 
for ample canonical heights 
as a conjecture, and prove it for endomorphisms on 
varieties of small Picard numbers, abelian varieties, and surfaces.
As applications, for the endomorphisms which satisfy the conjecture,
we show the non-density of the set of preperiodic points over a fixed number field,
and obtain a dynamical Mordell--Lang type result  
on the intersection of two Zariski dense orbits of two endomorphisms on a common 
variety.
\end{abstract}

\maketitle

\tableofcontents 

\section{Introduction}\label{sec1}
Let $X$ be a smooth projective variety over $\overline{\mathbb Q}$ and $f$ a 
polarized endomorphism, that is, a surjective morphism from $X$ onto $X$ with 
an ample divisor $H$ such that $f^*H \sim dH$ for some $d >1$.
Let $h_H$ be a height associated to $H$.
Then we can define the \textit{canonical height associated to $H$} 
due to Call--Silverman \cite{CaSi93}: 
$$\hat h_{H,f}(x)=\lim_{n \to \infty} \frac{h_H(f^n(x))}{d^n}.$$
Then $\hat h_{H,f}$ is considered as a new ample height on $X$ 
which reflects the dynamics of $f$; 
for example, $\hat h_{H,f} \circ f = d \hat h_{H,f}$ holds and, for any point $x$,
$\hat h_{H,f}(x)=0$ if and only if $x$ is $f$-preperiodic 
i.e.~$\{ x, f(x), f^2(x),\ldots \}$ is finite.
Moreover, the Northcott finiteness theorem (Theorem \ref{thm_Northcott}) implies that 
the set 
$$\{ x \in X(K) \mid  \hat h_{H,f}(x)=0 \}$$
is finite for any number field $K$.
This result might be seen as a dynamical version of the Northcott finiteness theorem.
Eventually,
it follows that the set of $f$-preperiodic $K$-rational points 
is finite for any number field $K$.
In particular, any point $x \in X(\overline{\mathbb Q})$ with $\hat h_{H,f}(x)>0$ 
is not $f$-preperiodic.

Thus canonical height is a powerful tool to study the dynamics of 
polarized endomorphisms over number fields.
So it is nice if we have such a canonical height for general endomorphisms.
But the following example shows that we should modify the definition of 
canonical heights for general endomorphisms.

\begin{ex}\label{ex1.1}
Let $E$ be an elliptic curve over $\overline{\mathbb Q}$ and set 
$X=E \times E$.
Take two integers $a \in \mathbb Z_{\geq 2}$, $b \in \mathbb Z \setminus \{0\}$ and 
let $f$ be an endomorphism on $X$ defined as 
$f(x,y)=(ax+by,ay)$ for $(x,y) \in X(\overline{\mathbb Q})$.
Then we have $f^n(x,y)=(a^nx+na^{n-1}by,a^ny)$.
Let $\hat h_E$, $\hat h_X$ be N\'eron--Tate heights on $E,X$ respectively 
such that $\hat h_X(x,y)=\hat h_E(x)+\hat h_E(y)$.
Then $\hat h_X(f^n(0,y))=\hat h_E(na^{n-1}by)+\hat h_E(a^ny)
=a^{2n}(n^2 a^{-2}b^2+1) \hat h_E(y)$.
If $y \in E(\overline{\mathbb Q})$ is not a torsion point, then $\hat h_E(y)>0$ and so 
$\hat h_X(f^n(0,y))$ grows like $a^{2n}n^2$ as $n$ grows.
In this case, we should define the canonical height $\hat h_f(x,y)$ 
with respect to $f$ at $(0,y)$ as 
$$\hat h_f(0,y)
=\lim_{n \to \infty} \frac{\hat h_X(f^n(0,y))}{a^{2n}n^2}
=a^{-2} b^2 \hat h_E(y).$$
The dynamical degree $\delta_f$ of $f$ (cf.~Notation and Conventions below) 
is equal to $a^2$ (cf.~Theorem \ref{thm5.2}).
\end{ex}

Taking this example into account, we will define ample canonical heights for 
general endomorphisms.
Silverman \cite[p.~649]{Sil14} defined the (upper) canonical heights for 
rational self-maps on projective spaces as follows: 
let $\varphi: \mathbb P^d \dashrightarrow \mathbb P^d$ be a rational map with 
$$\delta_{\varphi}=\lim_{n \to \infty} (\deg(\varphi^n))^{1/n} >1.$$
Then the upper canonical height at $P \in \mathbb P^d(\overline{\mathbb Q})$ is 
$$\hat h_{\varphi}(P)=\limsup_{n \to \infty} 
\frac{h(\varphi^n(P))}{n^{l_{\varphi}}\delta_{\varphi}^n},$$
where $h$ is the natural height function on $\mathbb P^d$ and 
$$l_{\varphi}=\inf \left\{ l \geq 0 
\mid \sup_{n \geq 1} \frac{\deg(\varphi^n)}{n^l \delta_{\varphi}^n} < \infty \right\}.$$
Note that we may have $\hat h_{\varphi}(P) = \infty$ for some 
rational self-map $\varphi$ and $P \in \mathbb P^d(\overline{\mathbb Q})$.

Modifying the definition of the canonical height for a self-map on a projective space,
we define (upper/lower) canonical height for (not necessarily polarized) endomorphisms.
For a pair $(X,f)$ of a projective variety $X$ over $\overline{\mathbb Q}$ 
and an endomorphism $f$ on $X$,
fix an ample height $h_X \geq 1$ i.e.~a height associated to an ample divisor on $X$.
Let $\delta_f$ be the (first) dynamical degree of $f$ 
(see Notation and Conventions below), and 
$l_f$ the minimal non-negative integer such that 
the sequence $\{ \delta_f^{-n} n^{-l_f} h_X(f^n(x)) \}_{n=0}^\infty$ is upper bounded 
for every $x \in X(\overline{\mathbb Q})$.
The existence of such $l_f$ is proved 
by Matsuzawa \cite[Theorem 1.6]{Mat16} (cf.~Theorem 2.6).
We define the \textit{upper} (resp.~\textit{lower}) \textit{ample canonical height} 
$\overline h_f, \underline h_f$ as 
$$
\overline h_f(x)=\limsup_{n \to \infty} \frac{h_X(f^n(x))}{\delta_f^n n^{l_f}},
$$
$$
\underline h_f(x)=\liminf_{n \to \infty} \frac{h_X(f^n(x))}{\delta_f^n n^{l_f}}.
$$

It is obvious by definition that $\overline h_f$ and $\underline h_f$ take  
finite and non-negative values at every point.
If $f$ is a polarized endomorphism, then $\overline h_f, \underline h_f$ are 
essentially equivalent to the canonical height of Call--Silverman, 
as we will see in Section \ref{sec4}.
So we can regard the notion of ample canonical heights as a generalization of 
canonical heights for polarized endomorphisms.

On the other hand, Kawaguchi and Silverman \cite{KaSi16a} introduced the canonical height 
$\hat h_{D,f}$ associated to a \textit{nef} $\mathbb R$-Cartier $\mathbb R$-divosor $D$ 
such that $D$ is not numerically trivial and $f^*D$ is numerically equivalent to $\delta_fD$,
which we call a \textit{nef canonical height} (cf.~Definition \ref{defn_nefcnht}).
Note that such $D$ always exists 
due to Perron--Frobenius--Birkhoff theorem (Theorem \ref{thm_pfb}).
Then the following questions naturally arise.

\begin{ques}\label{ques1}
Let $X$ be a smooth projective variety over $\overline{\mathbb Q}$ and $f$ an endomorphism on $X$ with $\delta_f>1$.
\begin{itemize}
\item[(i)] Whether $\overline h_f \asymp \underline h_f$ holds or not?
\item[(ii)] Does there exist  a nef canonical height $\hat h_{D,f}$ such that 
$\overline h_f \asymp \hat h_{D,f}$ and $\underline h_f \asymp \hat h_{D,f}$?
\end{itemize}
\end{ques}

For the definition of the relation ``$\asymp$", see Notation and Conventions below.
We will see that Question \ref{ques1} has positive answers in the following cases.
\begin{itemize}
\item There is an ample $\mathbb R$-divisor $H$ such that $f^*H \equiv \delta_f H$ 
(Theorem \ref{thm4.1} (i)).
\item The Picard number of $X$ is two and $f$ is an automorphism 
(Theorem \ref{thm4.2} (ii)).
\item $X$ is a Calabi--Yau threefold whose Picard number is at most three and 
$f$ is an automorphism (Theorem \ref{thm4.5} (i)).
\item $X$ is a surface and $f$ is an automorphism (Theorem \ref{thm6.6} (i)).
\end{itemize}
But in general the relationship of these height functions is not clear at the moment.

We expect that ample canonical heights have nice properties reflecting the 
dynamics of $f$.
As an analogy with the Northcott finiteness theorem for ample heights,
the set of points at which the (lower) ample canonical height vanishes should be 
``small".
Indeed, the zero set of the canonical height  
for a polarized endomorphism is ``small" as we saw above.

Let $K \subset \overline{\mathbb Q}$ be any subfield.
The symbol $Z_f(K)$ denotes 
the set of $K$-rational points of $X$ at which $\underline h_f$ takes zero.
The main objective of this article is to study the structure of $Z_f(K)$.
For that, we give the following conjecture as a dynamical analogue of 
the Northcott finiteness theorem.

\begin{conj}\label{conj_zf}
Let $X$ be a smooth projective variety over $\overline{\mathbb Q}$ 
and $f$ an endomorphism on $X$ with $\delta_f>1$.
Take any number field $K$.
Then there is an $f$-invariant proper closed subset $V$ of $X$ including $Z_f(K)$.
\end{conj}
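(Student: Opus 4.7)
The plan is to organize the argument around the action of $f^*$ on $N^1(X)_{\mathbb R}$ and the Perron--Frobenius--Birkhoff eigendivisor $D$ with $f^*D \equiv \delta_f D$, then bootstrap on classical Northcott finiteness for ample heights. The guiding idea: if $\underline h_f(x)=0$, then along some subsequence $n_k \to \infty$ the iterates $f^{n_k}(x)$ have height $o(\delta_f^{n_k} n_k^{l_f})$; one wants to translate this into \emph{bounded} ordinary height after modifying by an $f$-equivariant structure (a subvariety, a fibration, or an isogeny factor), so that classical Northcott applies.

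First I would dispose of the ``polarized-like'' sub-case in which some ample $\mathbb R$-divisor $H$ satisfies $f^*H \equiv \delta_f H$. There $\hat h_{H,f}$ is essentially the Call--Silverman height, and by Theorem~\ref{thm4.1}~(i) it is comparable to both $\overline h_f$ and $\underline h_f$; so $\underline h_f(x)=0$ forces $h_H(x)$ to be bounded, and Theorem~\ref{thm_Northcott} makes $Z_f(K)$ \emph{finite}. Its forward $f$-orbit (inside $X(K)$) is still finite, and the Zariski closure gives an $f$-invariant proper closed subset.

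For small Picard number one analyses the Jordan form of $f^*|_{N^1(X)_{\mathbb R}}$. When $\rho(X)=1$ this reduces to the polarized sub-case. When $\rho(X)=2$ and $f$ is an automorphism, $f^*$ has two eigenvalues $\delta_f > \delta_f^{-1}$ with nef eigendivisors $D^+,D^-$; a positive combination lies in the ample cone, so the pair $(\hat h_{D^+,f},\hat h_{D^-,f^{-1}})$ dominates an ample height, and Northcott applied to the intersection of their zero loci on $X(K)$ returns a finite set. An analogous pairing argument with three eigendivisors and the trilinear intersection form should handle the Calabi--Yau threefold case. For abelian varieties I would use an isogeny decomposition $A \sim \prod A_i^{n_i}$, split $f$ into components, and exploit the N\'eron--Tate machinery on each simple factor; Jordan blocks of $f^*$ on $N^1$ account for the $n^{l_f}$ factor as in Example~\ref{ex1.1}, and $Z_f(K)$ lands inside preimages of torsion-type loci on lower-dimensional abelian subvarieties, which form a proper $f$-invariant subvariety.

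The main obstacle, which is presumably why the general statement is left as a conjecture, arises when the Perron--Frobenius eigendivisor $D$ is nef but not big. Then $\hat h_{D,f}$ fails to dominate any ample height, its zero locus can be Zariski dense, and the reduction to Northcott on $h_H$ breaks. One must either pair $D$ with a complementary canonical height whose sum is ample, or use the Iitaka fibration of $D$ as an $f$-equivariant structure and push the problem to a lower-dimensional base. For surfaces the Kodaira--Enriques classification supplies exactly such $f$-equivariant elliptic fibrations (after replacing $f$ by an iterate), making Theorem~\ref{thm6.6} accessible; in higher dimension, producing and exploiting an $f$-compatible fibration attached to a non-big nef eigendivisor seems to require genuinely new input.
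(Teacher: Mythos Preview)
The statement is a \emph{conjecture}, and the paper does not prove it in general; it establishes only the special cases listed in Theorem~\ref{thm_main}. You correctly recognize this and frame your proposal as a sketch of those special cases together with an identification of the obstruction, so there is no ``gap'' in the sense of a failed proof. That said, two points in your special-case sketches deserve correction or comparison.

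First, your treatment of Calabi--Yau threefolds overreaches. The paper does \emph{not} establish Conjecture~\ref{conj_zf} for them; Theorem~\ref{thm4.5} proves only the weaker Conjecture~\ref{conj_main}, and even that is conditional on the abundance conjecture (Conjecture~\ref{conj4.4}). The three-eigendivisor pairing you propose runs into exactly the non-big obstacle you later identify: the third eigendivisor $D_0$ has eigenvalue~$1$, so no canonical height is attached to it, and the sum $D^+ + D^-$ need not be big. The paper circumvents this by using abundance to make $aD^+ + bD^-$ semiample and then applying Corollary~\ref{cor_Northcott}, which yields only non-density of the zero set along a dense orbit, not an $f$-invariant closed subset containing all of $Z_f(K)$.

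Second, for abelian varieties your route (isogeny decomposition into simple factors, then N\'eron--Tate on each) differs from the paper's. The paper instead exploits that $\{f^n\}$ satisfies a $\mathbb Q$-linear recurrence in $\End(X)_{\mathbb Q}$, passes to the Jordan form of the companion matrix, and extracts a leading term $g_1 \in \End(X)_{\mathbb C}$ whose vanishing locus (after descending to $\End(X)_{\mathbb Q}$ via Lemma~\ref{lem5.4}) is $B + X_{\mathrm{tors}}$ for a proper abelian subvariety~$B$. Your decomposition approach could plausibly be made to work, but it would require tracking how $f$ mixes the simple factors and handling Jordan blocks across the decomposition; the recurrence method sidesteps this by working directly in the endomorphism algebra. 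Otherwise your outline for the polarized, $\rho \le 2$, and surface cases matches the paper's strategy closely.
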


Clearly, it is sufficient for proving Conjecture \ref{conj_zf} 
to show the existence of such a closed subset 
for any sufficiently large number field.
The assumption that $\delta_f >1$ is necessary (see Example \ref{ex3.4.2} below).

We make a weaker conjecture,
which is a generalization of a conjecture of Kawaguchi and Silverman 
\cite[Conjecture 6 (d)]{KaSi16a}
restricted to the endomorphism case (cf.~Proposition \ref{prop3.2} (iv)).

\begin{conj}\label{conj_main}
Let $X$ be a smooth projective variety over $\overline{\mathbb Q}$ 
and $f$ an endomorphism on $X$ with $\delta_f>1$.
For any point $x \in X(\overline{\mathbb Q})$ whose  
$f$-orbit 
$$O_f(x)=\{ x, f(x),f^2(x),\ldots \}$$
is dense in Zariski topology, we have $\underline h_f(x)>0$.
\end{conj}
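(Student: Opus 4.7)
The plan is first to note that Conjecture \ref{conj_main} is a formal consequence of the stronger Conjecture \ref{conj_zf}: if $V \subsetneq X$ is an $f$-invariant closed subset containing $Z_f(K)$ for some number field $K$ over which $x$ is defined, then $x \notin V$ (else $O_f(x) \subset V$ would contradict Zariski density of the orbit), so $x \notin Z_f(K)$, i.e.\ $\underline h_f(x)>0$. Thus any partial progress on \ref{conj_zf} yields the same partial progress on \ref{conj_main}, and in particular the conjecture is established in exactly the regimes (small Picard number, abelian varieties, surfaces) announced in the abstract.

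For a more direct attack not requiring the full force of \ref{conj_zf}, I would bridge $\underline h_f$ to a nef canonical height. By Theorem \ref{thm_pfb}, fix a nonzero nef $\mathbb R$-divisor class $D$ with $f^*D \equiv \delta_f D$, and consider the Kawaguchi--Silverman canonical height $\hat h_{D,f}$, which satisfies $\hat h_{D,f} \circ f = \delta_f \hat h_{D,f}$ and is non-negative. The key substep is to show that over any fixed number field $K$ the zero locus $\{\hat h_{D,f}=0\}\cap X(K)$ is not Zariski dense in $X$; granting this, a point with Zariski dense orbit automatically has $\hat h_{D,f}(x)>0$, because if $\hat h_{D,f}(x)=0$ then $\hat h_{D,f}(f^n(x))=\delta_f^n\cdot 0=0$ for all $n$, forcing $O_f(x)$ into the zero locus.

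The remaining step is a comparison $\underline h_f(x) \geq c\,\hat h_{D,f}(x)\cdot \liminf_n n^{-l_f}$. One obtains this by writing an ample representative of the class underlying $h_X$ as a positive multiple of $D$ plus a nef/effective correction, yielding $h_X(f^n(x)) \geq c\,h_D(f^n(x))-O(1) = c\,\delta_f^n\,\hat h_{D,f}(x)-O(1)$, and dividing by $\delta_f^n n^{l_f}$ before taking $\liminf$.

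The main obstacle is precisely the residual $n^{-l_f}$ factor: when $l_f=0$ (notably when $D$ can be chosen ample, as in Theorem \ref{thm4.1}), positivity transfers cleanly, but when $l_f\geq 1$, as in Example \ref{ex1.1}, the naive comparison collapses. Overcoming this requires a finer analysis of the Jordan decomposition of $f^*$ on $N^1(X)_{\mathbb R}$: one would replace $D$ by a class adapted to the generalized eigenspace of maximal Jordan index for the eigenvalue $\delta_f$, and argue that a Zariski dense orbit must project nontrivially onto that generalized eigenspace, producing exactly the $n^{l_f}$ growth that matches the denominator in $\underline h_f$. This Jordan-block/dynamical-degeneracy analysis is where I expect the bulk of the effort to lie, which is consistent with the paper's restriction to structurally constrained ambient varieties in its unconditional results.
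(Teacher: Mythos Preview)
Your first paragraph is correct and is exactly the paper's treatment: Conjecture~\ref{conj_main} is stated as a conjecture, not a theorem, and the paper only argues (in the paragraph immediately following its statement) that it is implied by Conjecture~\ref{conj_zf} via the $f$-invariance of $Z_f(K)$, precisely as you wrote; unconditional instances then follow from the special cases of Theorem~\ref{thm_main}.

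Your ``direct attack'' does not sidestep the difficulty, however. The step you flag as the key substep---that $\{\hat h_{D,f}=0\}\cap X(K)$ is not Zariski dense---is itself an open problem of the same depth as what you are trying to prove. When $l_f=0$, the inequality $\hat h_{D,f}\leq M_1\,\underline h_f$ (see the proof of Theorem~\ref{thm3.11}) gives $Z_f(K)\subset\{\hat h_{D,f}=0\}\cap X(K)$, so non-density of the latter already yields non-density of $Z_f(K)$, i.e.\ a form of Conjecture~\ref{conj_zf}; and your intermediate conclusion ``$\hat h_{D,f}(x)>0$ whenever $O_f(x)$ is dense'' is a strengthening of the Kawaguchi--Silverman conjecture, which the paper explicitly identifies Conjecture~\ref{conj_main} as generalizing. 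So the route is circular even before the $n^{-l_f}$ collapse enters. Your Jordan-block heuristic is reasonable, and the paper does carry out exactly that kind of analysis---but only in the abelian variety case (Section~\ref{sec5}), where the linear recurrence satisfied by $\{f^n\}$ in $\End(X)_{\mathbb Q}$ makes it rigorous; it is not available on a general $X$. One minor technical slip: under the mere numerical equivalence $f^*D\equiv\delta_fD$ delivered by Theorem~\ref{thm_pfb}, the error in $\hat h_{D,f}=h_D+(\text{error})$ is $O(\sqrt{h_X})$, not $O(1)$ (Theorem~\ref{thm_canht}(ii)); this does not affect your argument but is worth noting.
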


Let $f$ be an endomorphism on a smooth projective variety $X$ with $\delta_f>1$ and 
assume that Conjecture \ref{conj_zf} holds for $f$.
Let $x \in X(\overline{\mathbb Q})$ be a point such that 
$O_f(x)$ is dense.
Here $O_f(x)$ is contained in $X(K)$ for a sufficiently large number field 
$K \subset \overline{\mathbb Q}$.
Suppose $x \in Z_f(K)$.
Then $O_f(x) \subset  Z_f(K)$ since $f(Z_f(K)) \subset Z_f(K)$ (cf.~Proposition \ref{prop3.2}),
but this contradicts Conjecture \ref{conj_zf} for $f$.
Hence $x \not\in Z_f(K)$ i.e.~$\underline h_f(x)>0$.
Thus Conjecture \ref{conj_zf} implies Conjecture \ref{conj_main}.

Our aim in this article is to show that Conjecture \ref{conj_zf} holds for 
certain endomorphisms.
The main result is the following.

\begin{thm}\label{thm_main}
Let $X$ be a smooth projective variety and $f$ an endomorphism on $X$ with $\delta_f>1$.
Then Conjecture \ref{conj_zf} holds in the following situations.
\begin{itemize}
\item  {\rm (Theorem \ref{thm4.1})} 
$f^*H \equiv \delta_f H$ for an ample $\mathbb R$-divisor $H$ on $X$.
This contains the case when the Picard number of $X$ is one.
\item {\rm (Theorem \ref{thm4.2})} 
$\rho(X) \leq 2$ and $f$ is an automorphism.
\item {\rm (Theorem \ref{thm5.1})}
$X$ is an abelian variety.
\item {\rm (Theorem \ref{thm6.6} and Theorem \ref{thm7.7})}
$X$ is a smooth projective surface.
\end{itemize}
\end{thm}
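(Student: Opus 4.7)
The plan is to handle the four cases separately, unified by the following strategy: construct a finite collection of nef canonical heights whose sum bounds $\underline h_f$ from below by an ample height (possibly only outside an explicit proper closed $f$-invariant subset), and then appeal to the classical Northcott finiteness theorem. Once the $K$-rational vanishing locus of $\underline h_f$ is known to lie in a set of bounded ample height, it is finite, and its Zariski closure together with finitely many iterated images under $f$ yields the required $f$-invariant proper closed subset $V$.

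In the polarized case $f^*H\equiv\delta_f H$ with $H$ ample, the Call--Silverman telescoping limit $\hat h_{H,f}=\lim_n\delta_f^{-n}h_H\circ f^n$ converges, $l_f=0$, and one shows $\underline h_f\asymp\hat h_{H,f}$; the zero set of this ample height on $X(K)$ is finite by Northcott. This already covers $\rho(X)=1$. For the automorphism case with $\rho(X)\leq 2$, I would apply Perron--Frobenius--Birkhoff to $f^*$ and $(f^{-1})^*$ on $\mathrm{NS}(X)_{\mathbb R}$ to produce nef classes $D^+,D^-$ with $f^*D^+\equiv\delta_f D^+$ and $(f^{-1})^*D^-\equiv\delta_{f^{-1}}D^-$. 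When $\rho(X)=2$ these sit on opposite extremal rays of the two-dimensional nef cone, so $D^++D^-$ is ample; summing the Kawaguchi--Silverman nef canonical heights for $f$ and for $f^{-1}$ then dominates $\underline h_f+\underline h_{f^{-1}}$ from below by an ample height, and Northcott concludes.

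For abelian varieties, one writes $f=\varphi+b$ with $\varphi$ an isogeny, and (after translating coordinates, which is generically possible since $1-\varphi$ is typically an isogeny) reduces to the isogeny case. Decomposing $X$ up to isogeny and diagonalizing the action of $\varphi$ on $\mathrm{NS}(X)_{\mathbb R}$, the quadratic behaviour of the N\'eron--Tate height expresses $\underline h_f(x)$ as a positive semidefinite form isolating the contribution of the largest Jordan block of $\varphi^*$ at eigenvalue $\sqrt{\delta_f}$; its vanishing locus is then a translate of a proper $f$-invariant abelian subvariety, which serves as $V$.

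The main obstacle is the surface case. Stratifying by whether $f$ is an automorphism and by Kodaira dimension, the automorphism subcase of positive entropy is handled via Hodge index on $\mathrm{NS}(X)_{\mathbb R}$: the nef eigenclasses $D^\pm$ satisfy $(D^\pm)^2=0$ and $D^+\cdot D^->0$, so $D^++D^-$ is big and nef even for large $\rho(X)$, and failure of ampleness is concentrated on the finitely many curves orthogonal to $D^\pm$, which assemble into an $f$-invariant proper closed set. For non-invertible endomorphisms on surfaces, the classification forces either a polarization (reducing to the first case) or an equivariant fibration structure, where the generic fiber and the base are treated by lower-dimensional results and then reassembled. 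Throughout, the delicate point is identifying the explicit $f$-invariant $V$ on which the sum of nef canonical heights degenerates; this geometric content is exactly what Theorems \ref{thm6.6} and \ref{thm7.7} must extract in full generality.
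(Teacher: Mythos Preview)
Your outline for the polarized case and the surface cases matches the paper closely. There are two gaps elsewhere.

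For $\rho(X)=2$ with $f$ an automorphism, your argument only bounds $h_A$ on the set where \emph{both} $\hat h^+\asymp\underline h_f$ and $\hat h^-\asymp\underline h_{f^{-1}}$ vanish, whereas Conjecture~\ref{conj_zf} concerns $Z_f(K)$ alone. The paper first shows that any non-periodic $x$ has $\hat h^+(x)>0$ (since Northcott forces $\{\delta_f^n\hat h^+(x)+\delta_f^{-n}\hat h^-(x)\}_n$ to be unbounded along the orbit), giving $Z_f=\Per_f=\Per_{f^{-1}}=Z_{f^{-1}}$; only then does the ample bound apply. The same missing link recurs implicitly in your surface-automorphism paragraph.

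The abelian-variety sketch has deeper problems. Saying ``$1-\varphi$ is typically an isogeny'' is not an argument; the paper uses Silverman's splitting (Lemma~\ref{lem5.9}) into an isogenous product $X_1\times X_2$ with $\id-\varphi_1$ surjective on $X_1$ and $\delta_{\varphi_2}=1$, translating on $X_1$ and discarding $X_2$. The paper then works not on $\mathrm{NS}(X)_{\mathbb R}$ but with the linear recurrence for $\{f^n\}$ in $\End(X)_{\mathbb Q}$: the Jordan form of the companion matrix yields $f^n=\sum_i\lambda_i^n n^{l_i}g_i$ with $g_i\in\End(X)_{\mathbb C}$, and $\underline h_f(x)\asymp\hat h_X(g_1(x))$. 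Crucially, $g_1$ has \emph{complex} coefficients, so $\{g_1=0\}$ is not a priori an abelian subvariety; the paper invokes the Kawaguchi--Silverman rationality lemma (Lemma~\ref{lem5.4}) to replace the real and imaginary parts of $g_1$ by integral endomorphisms, obtaining $Z_f(\overline{\mathbb Q})=B(\overline{\mathbb Q})+P_0+X(\overline{\mathbb Q})_{\tors}$. This is not a single translate of $B$ as you assert, and containing $Z_f(K)$ in a proper $f$-invariant closed subset still requires the finiteness of $(X/B)(K)_{\tors}$ together with a Chevalley--Weil argument, neither of which appears in your plan.
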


Let us briefly see how these results are proved.
The first case is easily shown 
because the ample canonical height for a polarized endomorphism 
is equivalent to the canonical height due to Call--Silverman.

For the $\rho(X)=2$ case, we can take two nef $\mathbb R$-divisors $D_{\pm}$ which are 
eigenvectors of $f^*$ in $N^1(X)_{\mathbb R}$ and the associated canonical heights 
$\hat h_{D_{\pm},f}$, which help us to compute the ample canonical height.

If $X$ is an abelian variety and $f \in \End(X)$, then $\{f^n \}_{n=0}^\infty$ satisfies  
a $\mathbb Q$-linear recurrence relation in $\End(X)_{\mathbb Q}$.
Then we can compute ample canonical heights 
with the aid of the recurrence relation.

Surface automorphism case follows from arguments due to Kawaguchi 
 \cite{Kaw08} and Kawaguchi--Silverman \cite{KaSi14}.
We take two nef canonical heights $\hat h^{\pm}$ for $f^{\pm}$.
Then it turns out that 
$\hat h^{+}$ is equivalent to the ample canonical height.
So the assertion follows from results in \cite{Kaw08}.
Surface endomorphism case is proved by using the results due to Matsuzawa, Sano, and 
the author in \cite{MSS17}.
In \cite{MSS17}, it is proved that any non-automorphic endomorphism on 
a minimal surface which is isomorphic to neither $\mathbb P^2$ nor abelian surfaces 
admits a certain fibration to a curve.
Then some comutation of height on the surface 
is reduced to computation of a height on the curve.

\begin{rem}\label{rem_rat}
It is not clear that the above definition 
of ample canonical height works for an arbitrary rational self-map $f: X \dashrightarrow X$ 
because we do not know whether a non-negative integer $l$ which makes
$\{ \delta_f^{-n} n^{-l} h_X(f^n(x)) \}_{n=0}^\infty$ bounded for every $x$ 
exists or not.
\end{rem}

There are already various constructions of ``canonical heights" for certain self-maps.
Here the term ``canonical heights" means functions which are constructed from 
a (ample or nef) height function and reflect some dynamical behavior of the self-map.
So the definition of canonical heights in the following references are different in general. 
As mentioned above, Call and Silverman \cite{CaSi93} defined canonical heights 
for polarized endomorphisms, which includes the N\'eron--Tate heights on abelian varieties 
as a special case.
Kawaguchi \cite{Kaw06}, \cite{Kaw13} and Lee \cite{Lee13} constructed canonical heights 
for regular polynomial automorphisms.
Kawaguchi \cite{Kaw08} constructed canonical heights for surface automorphisms.
Siverman \cite{Sil14} defined and studied canonical heights for rational self-maps on 
projective spaces.
Kawaguchi and Silverman \cite{KaSi16a} showed that there always exists 
a nef canonical height, that is, a canonical height 
associated to a nef $\mathbb R$-divisor 
for any endomorphisms on normal projective varieties 
(cf.~Theorem \ref{thm_pfb} and Theorem \ref{thm_canht}).
Jonsson and Wulcan \cite{JoWu12} constructed canonical heights for plane polynomial 
maps of small topological degree.
Jonsson and Reschke \cite{JoRe15} constructed canonical heights for birational self-maps 
on surfaces.

This paper proceeds as follows.
In Section \ref{sec2}, we recall fundamental facts on heights.
In Section \ref{sec3}, we define ample canonical heights again and 
show elementary properties of them.
Section \ref{sec4} treats endomorphisms on smooth projective 
varieties of small Picard numbers.
Endomorphisms on smooth projective varieties of Picard number one,
automorphisms on smooth projective varieties of Picard number $\leq 2$,
and automorphisms on Calabi--Yau threefolds of Picard number $\leq 3$ are mainly studied.
We investigate endomorphisms on abelian varieties in Section \ref{sec5},
and endomorphisms on surfaces in Section \ref{sec6} and Section \ref{sec7}.
In Section \ref{sec8}, we make two applications of Theorem \ref{thm_main}.
First, we see that Conjecture \ref{conj_zf} implies the non-density of 
the preperiodic points over any fixed number field (Proposition \ref{prop8.0}),
and then we obtain such a non-density result for endomorphisms 
appearing in Theorem \ref{thm_main} (Theorem \ref{thm8.0.1}).
Second, we describe the intersection of two dense orbits $O_f(x), O_g(y)$ of 
endomorphisms $f,g$ on a variety.
Main results in Section \ref{sec8} are stated without the notion of height.
\begin{notation}
\begin{itemize}
\item[]
\item Throughout this article, we work over $\overline{\mathbb Q}$,
the algebraic closure of the rational number field.

\item A \textit{curve} (resp.~\textit{surface}) simply means a smooth projective variety 
of dimension one (resp.~dimension two) unless otherwise stated.

\item Let $X$ be a projective variety.
An \textit{endomorphism on $X$} means a surjective morphism from $X$ onto $X$.
A \textit{non-trivial endomorphism on $X$} means an endomorphism on $X$ 
which is not an 
automorphism.

\item Let $X$ be an abelian variety.
$\End(X)$ denotes the set of (not necessarily surjective) 
algebraic group homomorphisms from $X$ to $X$.
Set $\End(E)_{\mathbb K}=\End(E) \otimes_{\mathbb Z} \mathbb K$ 
($\mathbb K=\mathbb Q, \mathbb R$ or $\mathbb C$).

\item Let $X$ be a projective variety and $f$ an endomorphism on $X$.
\begin{itemize}
\item[(i)]
The (\textit{forward}) \textit{$f$-orbit} of a point $x \in X(\overline{\mathbb Q})$ is 
the set $O_f(x) = \{ x, f(x), f^2(x), \ldots \}$.
\item[(ii)]
A point $x \in X(\overline{\mathbb Q})$ is \textit{$f$-periodic} if 
$f^n(x)=x$ for a positive integer $n$.
For any subfield $K \subset \overline{\mathbb Q}$,
$\Per_f(K)$ denotes the set of $f$-periodic $K$-rational points of $X$.
\item[(iii)]
A point $x \in X(\overline{\mathbb Q})$ is \textit{$f$-preperiodic} if 
$f^k(x)$ is $f$-periodic for a positive integer $k$.
For any subfield $K \subset \overline{\mathbb Q}$,
$\Preper_f(K)$ denotes the set of $f$-preperiodic $K$-rational points of $X$.

It is clear that $x$ is $f$-preperiodic if and only if $O_f(x)$ is finite.
Moreover, if $f$ is an automorphism, then $x$ is $f$-preperiodic if and only if 
$x$ is $f$-periodic.
\item[(iv)]
A closed subset $V \subset X$ is \textit{$f$-invariant} if $f(V) \subset V$,
and \textit{$f$-periodic} if it is $f^N$-invariant for some positive integer $N$.
\end{itemize}

\item Let $X$ be a smooth projective variety and $f$ an endomorphism on $X$.
Take an ample divisor $H$ on $X$. Then the limit 
$$\delta_f= \lim_{n \to \infty} ((f^n)^*H \cdot H^{\dim X -1})^{1/n}$$
exists and is independent of the choice of $H$.
The invariant $\delta_f$ is called the (\textit{first}) \textit{dynamical degree of $f$}.

\item Let $\mathbb K$ be $\mathbb R$ or $\mathbb C$.
For a $\mathbb K$-linear endomorphism $f: V \to V$ on a $\mathbb K$-vector space $V$,
$\rho(f)$ denotes the spectral radius of $f$, that is, 
the maximum of absolute values of eigenvalues of $f$.

\item The symbols $\sim$ (resp.~$\sim_{\mathbb Q}$, $\sim_{\mathbb R}$) and 
$\equiv$ mean 
the linear equivalence (resp.~$\mathbb Q$-linear equivalence,  
$\mathbb R$-linear equivalence) and the numerical equivalence on divisors.

\item For a projective variety $X$, $N^1(X)$ denotes 
the abelian group of the numerical equivalence classes of 
Cartier divisors of $X$. 
Set $N^1(X)_{\mathbb R}=N^1(X) \otimes_{\mathbb Z} \mathbb R$ and 
$\rho(X)=\dim_{\mathbb R} N^1(X)_{\mathbb R}$.
The number $\rho(X)$ is called the \textit{Picard number of $X$}.

\item Let $h_1, h_2$ be non-negative functions on a same domain.
We say that \textit{$h_2$ dominates $h_1$}, denoted by $h_1 \prec h_2$,
if there is a positive constant $C$ such that $h_1 \leq Ch_2$.
We say that $h_1$ is \textit{equivalent} to $h_2$,
denoted by $h_1 \asymp h_2$, if $h_1 \prec h_2$ and $h_2 \prec h_1$.

\item Let $f$, $g$ and $h$ be  $\mathbb R$-valued functions on a domain.
The equality $f = g + O(h)$ means that there is a positive constant $C$ such that 
$|f-g| \leq C |h|$.
In particular,
the equality $f=g + O(1)$ means that there is a positive constant $C$ such that 
$|f-g| \leq C$.

\item Let $X$ be a projective variety.
For an $\mathbb R$-Cartier $\mathbb R$-divisor $D$ on $X$,
a function $h_D: X(\overline{\mathbb Q}) \to \mathbb R$ is determined up to the 
difference of a bounded function.
$h_D$ is called the \textit{height function associated to $D$}.
For definition and properties of height functions, see e.g.~\cite[Part B]{HiSi00} or \cite[Chapter 3]{Lan83}.

\item For a projective variety $X$, we always fix an ample height function $h_X$,
that is, a height function associated to an ample divisor, with $h_X \geq 1$.
If $h_1, h_2$ are ample height functions on $X$ with $h_1, h_2 \geq 1$,
then $h_1 \asymp h_2$ (cf.~Lemma \ref{lem2.1}).

\item Let $X$ be a normal projective variety and $f$ an endomorphism on $X$.
Then the limit 
$$\alpha_f(x)=\lim_{n \to \infty} h_X(f^n(x))^{1/n}$$
exists and is independent of the choice of $h_X$ for every $x \in X(\overline{\mathbb Q})$ 
(\cite[Theorem 3 (a)]{KaSi16b}).
The number $\alpha_f(x)$ is called the \textit{arithmetic degree of $f$ at $x$}.
For details, see \cite{KaSi16a} and \cite{KaSi16b}.

\end{itemize}
\end{notation}

\begin{ack}
I would like to thank Professors Osamu Fujino and Mattias Jonsson 
for valuable comments.
I am grateful to Kaoru Sano and  Yohsuke Matsuzawa 
for insightful discussions and comments,
and Kenta Hashizume for answering some questions.
I appreciate the referee giving me so many important comments.
\end{ack}

\section{Basic results on heights}\label{sec2}

In this section, we recall some basic results on heights which are used later.

\begin{lem}\label{lem2.1}
Let $X$ be a projective variety.
For any $\mathbb R$-divisor $D$ on $X$,
$h_D \prec h_X$.
\end{lem}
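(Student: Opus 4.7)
The plan is to prove the lemma by bounding $h_D$ above by a multiple of $h_X$, exploiting the standard principle that heights associated to ample divisors are bounded below. The key observation is that while $h_D$ is only determined up to $O(1)$, the inequality $h_D \leq C h_X$ is insensitive to such perturbations because $h_X \geq 1$ by convention, so any bounded error can be absorbed into the constant.

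First I would fix an ample divisor $A$ on $X$ with $h_X = h_A + O(1)$. Since the ample cone in $N^1(X)_{\mathbb R}$ is open, I can choose a real number $c > 0$ large enough that $cA - D$ is an ample $\mathbb R$-divisor.

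Next, I would invoke the standard fact that the height function associated to any ample $\mathbb R$-divisor is bounded below on $X(\overline{\mathbb Q})$. For a very ample integral divisor $H$, this is because $h_H$ agrees up to $O(1)$ with the pullback of the non-negative naive height on $\mathbb P^N$ via the embedding determined by $|H|$. The general case follows by writing an ample $\mathbb R$-divisor as a positive $\mathbb R$-linear combination of very ample integral divisors and using the $\mathbb R$-linearity of heights modulo bounded functions. Applied to $cA - D$, this yields $h_{cA - D} \geq -C_1$ for some constant $C_1 \geq 0$.

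Finally, additivity of heights up to $O(1)$ gives
\[
h_D \;=\; c\, h_A \;-\; h_{cA - D} \;+\; O(1) \;\leq\; c\, h_A + C_1 + O(1).
\]
Combining $h_A = h_X + O(1)$ with $h_X \geq 1$ to absorb the additive constant, I conclude $h_D \leq C h_X$ for some $C > 0$, which is exactly $h_D \prec h_X$. There is no real obstacle here: every ingredient is standard from the theory of height functions (cf.\ \cite[Part B]{HiSi00}), and the main conceptual point is simply the reduction of an arbitrary $\mathbb R$-divisor to a difference of ample $\mathbb R$-divisors.
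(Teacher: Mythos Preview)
Your proof is correct and essentially identical to the paper's: both choose a large enough multiple of the ample divisor so that the difference with $D$ is ample, use that ample heights are bounded below, and then absorb the resulting additive constant into the multiplicative constant using $h_X \geq 1$. The only cosmetic difference is that the paper takes $h_X = h_H$ directly (rather than $h_X = h_A + O(1)$) and uses an integer multiple $N$ instead of a real $c$, which makes no substantive difference.
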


\begin{proof}
We set $h_X=h_H \geq 1$ for some ample divisor $H$ on $X$.
Take a sufficiently large integer $N$ such that $NH-D$ is ample.
Then $h_H = (1/N)h_{NH} +O(1)=(1/N)(h_D+h_{NH-D}) +O(1) \geq (1/N)h_D+O(1)$.
So $h_D \leq N h_H + O(1)$.
Since $h_H \geq 1$, we can take a sufficiently large $C>0$ such that $h_D \leq C h_H$.
\end{proof}

\begin{thm}[Northcott finiteness theorem]\label{thm_Northcott}
Let $X$ be a projective variety over a number field $K$, 
$H$ an ample $\mathbb R$-Cartier 
$\mathbb R$-divisor on $X$,
$d$ a positive integer,
and $B$ a positive constant.
Then the set 
$$\{ x \in X(L) \mid L \mathrm{\ is\ a\ number\ field\ with\ } 
[L: K] \leq d,\ h_H(x) \leq B \}$$ 
is finite.
\end{thm}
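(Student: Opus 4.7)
The plan is to reduce to the classical Northcott theorem for the standard Weil height on $\mathbb{P}^n$, taking that statement as the atomic fact to prove last.

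First I would reduce the case of an ample $\mathbb{R}$-divisor $H$ to an ample Cartier divisor. Since the ample cone in $N^1(X)_{\mathbb{R}}$ is open, I can find an ample Cartier divisor $A$ and a positive rational number $\epsilon$ with $H - \epsilon A$ still ample. By additivity of heights, $h_H = \epsilon h_A + h_{H - \epsilon A} + O(1)$, and up to a bounded function the height of the ample divisor $H - \epsilon A$ is bounded below (pull back a non-negative standard height from a projective embedding via a very ample multiple). Consequently, a bound $h_H(x) \leq B$ yields a bound $h_A(x) \leq B'$ for some constant $B'$ depending only on $B$, $\epsilon$, and the implicit $O(1)$ constants. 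Now pass to a very ample multiple $mA$ defining a closed embedding $\phi : X \hookrightarrow \mathbb{P}^n$ over $K$; by functoriality of heights $h_{mA}(x) = h(\phi(x)) + O(1)$, where $h$ is the standard height on $\mathbb{P}^n$. Since $\phi$ is injective, it suffices to prove the finiteness statement in $\mathbb{P}^n$.

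For $\mathbb{P}^n$, writing $P = [x_0 : \cdots : x_n] \in \mathbb{P}^n(L)$ and normalizing by some $x_j \neq 0$, each coordinate $\xi_i = x_i / x_j$ is an algebraic number whose absolute multiplicative height is bounded by an explicit function of $h(P)$ and whose degree over $\mathbb{Q}$ is bounded by $[L : \mathbb{Q}] \leq d \cdot [K : \mathbb{Q}]$. Since the coefficients of the minimal polynomial of $\xi_i$ over $\mathbb{Q}$ are elementary symmetric functions in its Galois conjugates, and those conjugates share the same height, the coefficients are themselves bounded in terms of $h(P)$ and the degree. Hence only finitely many minimal polynomials occur, yielding finitely many candidates for each $\xi_i$ and thus for $P$.

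The essential obstacle is the last step, the classical Northcott theorem for $\mathbb{P}^n$, which is the actual content; the preceding reductions are routine applications of the functorial properties of height functions recalled at the end of the Notation section. In practice the author will likely cite this fact from \cite[Part B]{HiSi00} or \cite[Chapter 3]{Lan83} rather than reproduce the argument.
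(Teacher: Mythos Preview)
Your anticipation is exactly right: the paper states Theorem~\ref{thm_Northcott} without proof, treating it as a classical result (the references \cite[Part B]{HiSi00} and \cite[Chapter 3]{Lan83} are given in the Notation section for height functions generally). Your reduction sketch from an ample $\mathbb{R}$-divisor to the classical statement on $\mathbb{P}^n$ is correct and is the standard way one would fill in the details if pressed.
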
 

From the Northcott finiteness theorem, we can deduce a similar result for semiample 
divisors.

\begin{cor}\label{cor_Northcott}
Let $X$ be a projective variety over a number field $K$, 
$D$ a semiample 
Cartier divisor on $X$ with $D \not\sim 0$,
$d$ a positive integer,
and $B$ a positive constant.
Then the set 
$$\{ x \in X(L) \mid L \mathrm{\ is\ a\ number\ field\ with\ } 
[L: K] \leq d,\ h_D(x) \leq B \}$$ 
is not dense.
\end{cor}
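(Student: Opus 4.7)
The plan is to reduce the statement to the Northcott finiteness theorem via the morphism defined by a base-point-free multiple of $D$. Since $D$ is semiample, I first choose an integer $m \geq 1$ such that $|mD|$ is base-point-free, and let $\varphi = \varphi_{|mD|} \colon X \to \mathbb{P}^N$ be the associated morphism, with $Y = \varphi(X)$. Then $mD \sim \varphi^*H$ for a hyperplane $H$ on $\mathbb{P}^N$, and by functoriality of heights one has
$$m \, h_D(x) = h_H(\varphi(x)) + O(1).$$

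Next I would verify $\dim Y \geq 1$. Were $Y$ a point, then $\varphi^*H \sim 0$ would force $mD \sim 0$, which is incompatible with the hypothesis $D \not\sim 0$ (understood in the non-torsion sense; if some positive multiple of $D$ were principal then $h_D$ would be bounded and the statement would be vacuous). Granted this, whenever $h_D(x) \leq B$ one obtains $h_H(\varphi(x)) \leq mB + C$ for an explicit constant $C$, and applying Theorem \ref{thm_Northcott} to $(\mathbb{P}^N, H)$ shows that the set
$$S = \{y \in \mathbb{P}^N(L) \mid [L:K] \leq d,\ h_H(y) \leq mB + C\}$$
is finite.

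Finally, every point $x$ in the set of interest satisfies $\varphi(x) \in S$, so the set is contained in the closed subvariety $\varphi^{-1}(S)$. Because $\dim Y \geq 1$ and $S$ is finite, $\varphi^{-1}(S)$ is a proper closed subset of $X$, whence the set is not Zariski dense. The sole delicate point is the verification $\dim Y \geq 1$, which rests on interpreting the non-triviality hypothesis on $D$ appropriately; once that is in place, the remainder is a direct use of the functoriality of height functions and Theorem \ref{thm_Northcott}.
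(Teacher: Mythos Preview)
Your argument is correct and matches the paper's approach: pass to the morphism $\varphi$ defined by a base-point-free multiple of $D$, bound $h_H\circ\varphi$ in terms of $h_D$, and apply Theorem~\ref{thm_Northcott} on the target to conclude that the set lies in the proper closed preimage of finitely many points. One small correction to your parenthetical: if $D$ were torsion the statement would not be vacuous but actually \emph{false} for $B$ large (since $h_D$ is then bounded and the set becomes all points of bounded degree, which is dense), so the non-torsion reading of $D\not\sim 0$ is indeed forced---the paper's own proof glosses over this $\dim Y\ge 1$ verification, and your explicit attention to it is an improvement.
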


\begin{proof}
Take a sufficiently large integer $N$ such that $ND$ is base point free.
Then there is a surjective morphism $\phi: X \to Y$ to a projective variety $Y$ 
such that $ND \sim \phi^*H$ for some ample divisor $H$ on $Y$.
Then $h_H \circ \phi = Nh_D+ O(1)$, so we can take $C>0$ such that 
$h_H \circ \phi \leq Nh_D + C$.
Set 
$$S=\{ x \in X(L) \mid L \mathrm{\ is\ a\ number\ field\ with\ } 
[L: K] \leq d,\ h_D(x) \leq B \},$$
$$T=\{ y \in Y(L) \mid L \mathrm{\ is\ a\ number\ field\ with\ } 
[L:K] \leq d,\ h_H(y) \leq NB+C \}.$$
Then $S \subset \phi^{-1}(T)$, and $T$ is a finite set by Theorem \ref{thm_Northcott}.
So $S$ is contained in a proper closed subset.
\end{proof}

As an application of Perron--Frobenius--Birkhoff theorem, we obtain the following 
(cf.~{\cite[Remark 31]{KaSi16a}}).

\begin{thm}\label{thm_pfb}
Let $X$ be a projective variety and $f$ an endomorphism on $X$.
Then there is a nef $\mathbb R$-Cartier $\mathbb R$-divisor $D$ on $X$ such that 
$D \not\equiv 0$ and $f^*D \equiv \delta_fD$.
\end{thm}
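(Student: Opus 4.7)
The plan is to apply the Perron--Frobenius--Birkhoff theorem to the pullback action $f^*$ on the nef cone in $N^1(X)_{\mathbb R}$. Recall the theorem: if $T: V \to V$ is a linear endomorphism of a finite-dimensional real vector space and $C \subset V$ is a closed, strict (i.e.\ $C \cap (-C) = \{0\}$), full-dimensional convex cone with $T(C) \subseteq C$, then $T$ has an eigenvector in $C$ whose eigenvalue equals the spectral radius $\rho(T)$.

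First I would verify the cone-theoretic hypotheses. The nef cone $\mathrm{Nef}(X) \subset N^1(X)_{\mathbb R}$ is closed and convex by definition, full-dimensional (since it contains the nonempty open ample cone), and strict: if both $[D]$ and $-[D]$ are nef, then $(D \cdot C) = 0$ for every irreducible curve $C \subset X$, hence $[D] = 0$. Since $f$ is a surjective morphism, $f^*$ preserves $\mathrm{Nef}(X)$: for a nef Cartier class $[D]$ and an irreducible curve $C \subset X$, the projection formula gives $(f^*D \cdot C) = (\deg f|_C)(D \cdot f(C))$ when $f(C)$ is a curve and $0$ when $f(C)$ is a point, non-negative in either case. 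Applying Perron--Frobenius--Birkhoff then produces a nonzero $[D] \in \mathrm{Nef}(X)$ with $f^*[D] = \lambda[D]$ for $\lambda = \rho(f^*|_{N^1(X)_{\mathbb R}})$.

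It remains to identify $\lambda$ with $\delta_f$. Fix any ample class $H$ and any norm $\|\cdot\|$ on $N^1(X)_{\mathbb R}$. Gelfand's formula gives $\|(f^n)^*H\|^{1/n} \to \rho(f^*)$, and continuity of the linear functional $\alpha \mapsto (\alpha \cdot H^{\dim X -1})$ yields the upper bound
$$\delta_f = \lim_{n \to \infty} \bigl((f^n)^*H \cdot H^{\dim X -1}\bigr)^{1/n} \leq \rho(f^*).$$
Conversely, since $H$ lies in the interior of $\mathrm{Nef}(X)$ and $f^*$ preserves the cone, each $(f^n)^*H$ is nef; on the compact slice $\mathrm{Nef}(X) \cap \{\|\alpha\|=1\}$ the continuous functional $\alpha \mapsto (\alpha \cdot H^{\dim X -1})$ is strictly positive, so by compactness it is bounded below by a positive constant, which after homogenization gives $\rho(f^*) \leq \delta_f$, and hence $\lambda = \delta_f$.

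The main obstacle is precisely this last strict positivity: showing that $(\alpha \cdot H^{\dim X -1}) > 0$ for every nonzero nef $\alpha$ is not entirely formal and relies on a non-trivial input (Khovanskii--Teissier inequalities, or the duality between the nef cone and the cone of movable curves). However, the resulting equality $\rho(f^*) = \delta_f$ is a standard fact about the first dynamical degree of surjective endomorphisms of projective varieties (cf.\ \cite{KaSi16a}), and may be invoked as such rather than reproved in detail.
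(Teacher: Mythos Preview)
Your argument is correct and aligns with what the paper intends: the paper does not give a proof of this theorem at all, but states it with the parenthetical ``(cf.\ \cite[Remark 31]{KaSi16a})'' as an application of the Perron--Frobenius--Birkhoff theorem, which is precisely the approach you spell out. Your identification of the one nontrivial ingredient---that $(\alpha \cdot H^{\dim X-1})>0$ for every nonzero nef class $\alpha$---is on target; the paper itself invokes exactly this fact later (in the proof of Theorem~\ref{thm3.11}) as \cite[Lemma 20]{KaSi16a}, so citing it is appropriate.
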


For an $\mathbb R$-Cartier $\mathbb R$-divisor $D$ which is an eigenvector of 
$f^*: N^1(X)_{\mathbb R} \to N^1(X)_{\mathbb R}$,
we can define the \textit{canonical height associated to $D$} 
under some assumptions.

\begin{thm}[{\cite{CaSi93}} and {\cite[Theorem 5]{KaSi16a}}]\label{thm_canht}
Let $X$ be a projective variety, $f$ an endomorphism on $X$ with $\delta_f >1$, 
and $D$ an $\mathbb R$-Cartier $\mathbb R$-divisor on $X$.
\begin{itemize}
\item[(i)]
Assume that $f^*D \sim_{\mathbb R} \lambda D$ with $\lambda >1$.
Then the limit 
$$\hat h_{D, f}(x)=\lim_{n \to \infty} \frac{h_D(f^n(x))}{\lambda^n}$$
exists for every $x \in X(\overline{\mathbb Q})$ and satisfies
$\hat h_{D,f} = h_D +O(1)$.
\item[(ii)]
Assume that $f^*D \equiv \lambda D$ with $\lambda > \sqrt{\delta_f}$.
Then the limit 
$$\hat h_{D, f}(x)=\lim_{n \to \infty} \frac{h_D(f^n(x))}{\lambda^n}$$
exists for every $x \in X(\overline{\mathbb Q})$ and satisfies
$\hat h_{D,f} = h_D +O(\sqrt{h_X})$.
\end{itemize}
\end{thm}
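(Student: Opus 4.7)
For (i), the plan is the standard Call--Silverman telescoping. Since $f^*D \sim_{\mathbb{R}} \lambda D$, functoriality of heights modulo bounded functions yields a constant $C > 0$ with $|h_D(f(y)) - \lambda h_D(y)| \leq C$ for all $y \in X(\overline{\mathbb{Q}})$. Substituting $y = f^n(x)$ and dividing by $\lambda^{n+1}$ gives
$$\left|\frac{h_D(f^{n+1}(x))}{\lambda^{n+1}} - \frac{h_D(f^n(x))}{\lambda^n}\right| \leq \frac{C}{\lambda^{n+1}},$$
which is summable in $n$ uniformly in $x$ since $\lambda > 1$. The sequence $\{h_D(f^n(x))/\lambda^n\}_n$ is therefore Cauchy, so the limit $\hat h_{D,f}(x)$ exists, and telescoping gives $|\hat h_{D,f}(x) - h_D(x)| \leq C/(\lambda - 1) = O(1)$, as required.

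For (ii), I would run the same telescoping but feed in two refined inputs in place of $|h_D \circ f - \lambda h_D| = O(1)$. The first is the classical height-theoretic comparison for numerical equivalence: if $E \equiv 0$ on $X$, then $h_E = O(\sqrt{h_X})$ (a standard consequence of N\'eron--Tate theory applied to the Picard variety of $X$, cf.~\cite{HiSi00}). Applied to $E = f^*D - \lambda D$, this yields $K > 0$ with $|h_D(f(y)) - \lambda h_D(y)| \leq K\sqrt{h_X(y)}$. The second input is a mild exponential growth bound: for every $\varepsilon > 0$ there is $C_\varepsilon > 0$ with $h_X(f^n(x)) \leq C_\varepsilon (\delta_f + \varepsilon)^n h_X(x)$, obtainable from the Matsuzawa estimate mentioned in the introduction (or directly by iterating an ample comparison $f^*H \preceq (\delta_f + \varepsilon) H$ up to lower-order terms). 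Since $\lambda > \sqrt{\delta_f}$, I may choose $\varepsilon$ so small that $\mu := \sqrt{\delta_f + \varepsilon}/\lambda < 1$. The telescoping estimate then becomes
$$\left|\frac{h_D(f^{n+1}(x))}{\lambda^{n+1}} - \frac{h_D(f^n(x))}{\lambda^n}\right| \leq \frac{K\sqrt{h_X(f^n(x))}}{\lambda^{n+1}} \leq \frac{K\sqrt{C_\varepsilon}}{\lambda}\,\mu^n\,\sqrt{h_X(x)},$$
which is summable in $n$ after factoring out $\sqrt{h_X(x)}$. Convergence of the sequence follows, and telescoping yields $|\hat h_{D,f}(x) - h_D(x)| \leq C'\sqrt{h_X(x)}$ for some $C' > 0$.

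The main obstacle is really the square-root estimate $h_E = O(\sqrt{h_X})$ for numerically trivial $E$: it is the technical ingredient that is not visible on the face of the statement and is precisely what forces the weaker $O(\sqrt{h_X})$ error in (ii) compared with the bounded error in (i). The strict inequality $\lambda > \sqrt{\delta_f}$ enters only at the summability step, to guarantee $\mu < 1$; at $\lambda = \sqrt{\delta_f}$ the geometric-series bound breaks down, which explains why the hypothesis is sharp.
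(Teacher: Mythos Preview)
The paper does not give its own proof of this theorem; it is stated as a cited result from \cite{CaSi93} and \cite[Theorem 5]{KaSi16a} and used as a black box thereafter. Your proposal correctly reconstructs the standard arguments of those references: part (i) is exactly the Call--Silverman telescoping, and part (ii) is the Kawaguchi--Silverman refinement, combining the height bound $h_E = O(\sqrt{h_X})$ for numerically trivial $E$ with the growth estimate $h_X \circ f^n \leq C_\varepsilon(\delta_f+\varepsilon)^n h_X$ (stated in the paper as Theorem~\ref{thm_KSM}) so that $\mu = \sqrt{\delta_f+\varepsilon}/\lambda < 1$ makes the telescoping series summable. There is no gap, and since the paper provides no proof there is nothing further to compare.
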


\begin{defn}\label{defn_nefcnht}
Let $X$ be a projective variety and $f$ an endomorphism on $X$ with $\delta_f>1$.
Take a nef $\mathbb R$-Cartier $\mathbb R$-divisor $D$ on $X$ such that 
$D \not\equiv 0$ and $f^*D \equiv \delta_fD$ by using Theorem \ref{thm_pfb}.
Then Theorem \ref{thm_canht} implies that the limit 
$$\hat h_{D, f}(x)=\lim_{n \to \infty} \frac{h_D(f^n(x))}{\delta_f^n}$$
exists for every $x \in X(\overline{\mathbb Q})$.
We call $\hat h_{D,f}$ a \textit{nef canonical height for $f$ associated to $D$}.
\end{defn}

To estimate the growth of heights $h_X(f^n(x))$ as $n$ increases,
the following result is fundamental.

\begin{thm}[{\cite[Theorem 1.6]{Mat16}}]\label{thm_Mat}
Let $X$ be a projective variety with $\rho(X)=r$ 
and $f$ an endomorphism on $X$.
\begin{itemize}
\item[(i)] 
If $\delta_f=1$, then there is a positive constant $C>0$ such that 
$h_X \circ f^n \leq C n^{2r+2} h_X$ for every $n \in \mathbb Z_{\geq 0}$.
\item[(ii)]
If $\delta_f >1$, then there is a positive constant $C>0$ such that 
$h_X \circ f^n \leq C \delta_f^n n^r h_X$ for every $n \in \mathbb Z_{\geq 0}$.
\end{itemize}
\end{thm}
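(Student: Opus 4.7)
The plan is to reduce Theorem \ref{thm_Mat} to linear algebra on the finite-dimensional space $N^1(X)_{\mathbb R}$. Writing $h_X \circ f^n$ in terms of a fixed basis of ample heights converts the question into an operator-norm estimate for $(f^n)^*$, which in turn is controlled by the Jordan normal form of $f^*$ together with the spectral-radius identity $\rho(f^*|_{N^1(X)_{\mathbb R}}) = \delta_f$.

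Concretely, I would first fix an ample $H$ with $h_X = h_H$ and ample divisors $D_1, \ldots, D_r$ whose classes form a basis of $N^1(X)_{\mathbb R}$ (possible since the ample cone spans). Expanding $(f^n)^*H \equiv \sum_{i=1}^r a_i^{(n)} D_i$ in this basis and combining the functoriality of heights with Lemma \ref{lem2.1}, which gives $|h_{D_i}| \leq C_0 h_X + O(1)$, one obtains
$$h_X \circ f^n \;\leq\; C \Bigl(\sum_{i=1}^r |a_i^{(n)}|\Bigr) h_X + O(\sqrt{h_X}) \;\leq\; C'\, \|(f^n)^*\|\, h_X,$$
where the $\sqrt{h_X}$ term is absorbed using $h_X \geq 1$ and $\|\cdot\|$ is any fixed norm on the endomorphisms of $N^1(X)_{\mathbb R}$. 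Next, because $f$ is surjective, $f^*$ preserves the nef cone, which has nonempty interior; Perron--Frobenius--Birkhoff (Theorem \ref{thm_pfb}) then produces a nef eigenvector with eigenvalue equal to the spectral radius, and a standard argument identifies this spectral radius with $\delta_f$. The Jordan normal form on the $r$-dimensional space $N^1(X)_{\mathbb R}$ yields $\|(f^n)^*\| \leq C n^{r-1} \delta_f^n$ in case (ii) and $\|(f^n)^*\| \leq C n^{r-1}$ in case (i) (every eigenvalue has modulus at most $\delta_f = 1$). Substitution delivers bounds sharper than, hence implying, the ones stated.

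The main obstacle is the passage between linear and numerical equivalence of divisors, which enters at the basis expansion step. Functoriality of heights is cleanest modulo linear equivalence, with $O(1)$ error, but the natural habitat for the Jordan-form analysis is $N^1(X)_{\mathbb R}$; two numerically equivalent divisors differ by an algebraically trivial one, whose height is only $O(\sqrt{h_X})$ by N\'eron--Tate theory. Keeping this error term uniform in $n$ and checking that it is indeed absorbed by the multiplicative $h_X$ factor is the delicate technical point. A secondary input requiring care is the spectral-radius identity $\rho(f^*|_{N^1(X)_{\mathbb R}}) = \delta_f$: one upper bound comes from $f^*$-invariance of the nef cone and Perron--Frobenius, and the matching lower bound from the definition $\delta_f = \lim_{n\to\infty}((f^n)^*H \cdot H^{\dim X-1})^{1/n}$ applied to any ample $H$.
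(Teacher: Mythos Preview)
The paper does not prove Theorem~\ref{thm_Mat}; it is quoted from \cite[Theorem~1.6]{Mat16} and used as a black box. So there is no ``paper's own proof'' to compare against, and your outline should be read as an attempt to reconstruct Matsuzawa's argument.

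Your skeleton is the right one: pass to $N^1(X)_{\mathbb R}$, identify $\delta_f$ with the spectral radius of $f^*$ there, and bound $\|(f^n)^*\|$ by Jordan form. The gap is precisely the point you flag but do not resolve. When you write
\[
h_X \circ f^n \;\leq\; C\Bigl(\sum_i |a_i^{(n)}|\Bigr) h_X + O(\sqrt{h_X}),
\]
the implied constant in $O(\sqrt{h_X})$ comes from the height of the numerically trivial divisor $E_n = (f^n)^*H - \sum_i a_i^{(n)} D_i$, and $E_n$ varies with $n$. There is no a~priori uniform bound on $h_{E_n}$, so you cannot simply ``absorb'' this term using $h_X \geq 1$. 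This is not a technicality that goes away with bookkeeping: it is the substance of the theorem.

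The fix is to track the action of $f^*$ on $\Pic^0(X)_{\mathbb R}$ (equivalently, the dual action on the Albanese) alongside the action on $N^1(X)_{\mathbb R}$, and to iterate a single-step estimate rather than expanding $(f^n)^*H$ directly. Concretely, one writes $f^*D_i \equiv \sum_j b_{ij} D_j$ once, with a fixed numerically trivial discrepancy $P_i$, and then controls the heights $h_{P_i} \circ f^k$ via the eigenvalues of $f^*$ on $H^1(X,\mathcal O_X)$. This extra layer is exactly why the stated exponent in case~(i) is $2r+2$ rather than the $r-1$ your pure Jordan-form count would give: the $\sqrt{h_X}$ errors, once iterated, contribute additional polynomial factors. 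Without this second piece your argument does not close.
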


Theorem \ref{thm_Mat} deduces the following weaker inequality.
Note that the inequality is proved for dominant rational self-maps, which is not needed 
in this article.

\begin{thm}[{\cite[Theorem 26]{KaSi16a}}, {\cite[Theorem 1.4]{Mat16}}]\label{thm_KSM}
Let $X$ be a projective variety, $f$ an endomorphism on $X$ with $\delta_f >1$, 
and $\varepsilon>0$ any positive constant.
Then there is a positive constant $C>0$ such that 
$h_X \circ f^n \leq C (\delta_f+\varepsilon)^n h_X$ for every $n \in \mathbb Z_{\geq 0}$.
\end{thm}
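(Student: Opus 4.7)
The plan is to deduce the inequality directly from Theorem \ref{thm_Mat} (ii), which already provides the sharper polynomial-times-exponential bound $h_X \circ f^n \leq C_0 \delta_f^n n^r h_X$ with $r = \rho(X)$ and some constant $C_0 > 0$. The extra room in the base of the exponential, namely the factor $((\delta_f + \varepsilon)/\delta_f)^n > 1$, will be more than enough to absorb the polynomial factor $n^r$, so no new dynamical input is required beyond Matsuzawa's theorem.

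Concretely, I would set $\lambda = (\delta_f + \varepsilon)/\delta_f > 1$ and use the elementary fact that $n^r/\lambda^n \to 0$ as $n \to \infty$ to define
$$M := \sup_{n \geq 0} \frac{n^r}{\lambda^n} < \infty.$$
Rewriting the Matsuzawa bound then yields
$$h_X \circ f^n \leq C_0 \delta_f^n n^r h_X = C_0 (\delta_f + \varepsilon)^n \cdot \frac{n^r}{\lambda^n} h_X \leq C_0 M (\delta_f + \varepsilon)^n h_X,$$
so one may take $C = C_0 M$, which depends on $X$, $f$, and $\varepsilon$ but not on $n$.

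The entire argument rests on Theorem \ref{thm_Mat}; once that is granted, there is no real obstacle, as everything reduces to the fact that any strictly larger exponential dominates a fixed polynomial. If one wanted to prove Theorem \ref{thm_KSM} without invoking Matsuzawa's theorem, the hard part would instead be to obtain effective control on the operator norms of $(f^n)^*$ acting on $N^1(X)_{\mathbb R}$ (which is essentially what Theorem \ref{thm_Mat} provides via Perron--Frobenius--type arguments together with Gelfand's formula $\rho((f^n)^*)^{1/n} \to \delta_f$); but in the present context this detour is unnecessary.
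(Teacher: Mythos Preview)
Your proposal is correct and matches the paper's own treatment: the paper does not give a separate proof of Theorem~\ref{thm_KSM} but simply remarks that ``Theorem~\ref{thm_Mat} deduces the following weaker inequality,'' and your argument---absorbing the polynomial factor $n^r$ into the exponential slack $((\delta_f+\varepsilon)/\delta_f)^n$---is precisely the intended derivation. (One trivial caveat: the case $n=0$ should be handled separately by taking $C \geq 1$, since for $r \geq 1$ the right-hand side in Theorem~\ref{thm_Mat}~(ii) vanishes at $n=0$; this does not affect the substance of the argument.)
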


Theorem \ref{thm_KSM} implies that $\alpha_f(x) \leq \delta_f$ for every point $x$.
On the other hand,
any dynamical system $(X,f)$ has a point whose arithmetic degree attains the dynamical 
degree:

\begin{thm}[{\cite[Theorem 1.6]{MSS17}}]\label{thm_existence}
Let $X$ be a smooth projective variety and $f$ an endomorphism on $X$.
Then there is a point $x \in X(\overline{\mathbb Q})$ such that $\alpha_f(x)=\delta_f$.
\end{thm}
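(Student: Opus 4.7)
The plan is to reduce to the case $\delta_f > 1$, exhibit a nef canonical height, and show that it is positive at some point whose forward orbit under $f$ then has height growing like $\delta_f^n$. If $\delta_f = 1$, every $x \in X(\overline{\mathbb Q})$ works: the convention $h_X \geq 1$ gives $\alpha_f(x) \geq 1$, while Theorem \ref{thm_KSM} gives $\alpha_f(x) \leq \delta_f + \varepsilon$ for any $\varepsilon > 0$, so $\alpha_f(x) = 1 = \delta_f$. Assume henceforth $\delta_f > 1$. First, apply Theorem \ref{thm_pfb} to obtain a nef $\mathbb R$-divisor $D$ with $D \not\equiv 0$ and $f^*D \equiv \delta_f D$; then Theorem \ref{thm_canht}(ii), valid since $\delta_f > \sqrt{\delta_f}$, yields the nef canonical height $\hat h_{D,f}$ satisfying $\hat h_{D,f} \circ f = \delta_f\, \hat h_{D,f}$ and $\hat h_{D,f} = h_D + O(\sqrt{h_X})$.

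Next, I would produce a point $x \in X(\overline{\mathbb Q})$ with $\hat h_{D,f}(x) > 0$. Since $D$ is nef and $D \not\equiv 0$, some irreducible curve $C \subset X$ satisfies $D \cdot C > 0$ (otherwise $D$ intersects every curve trivially and is numerically zero). Then $D|_C$ has positive degree on the curve $C$, so both $h_{D|_C}$ and $h_X|_C$ are ample heights on $C$, and are equivalent by Lemma \ref{lem2.1} applied in both directions on $C$. Applying Theorem \ref{thm_Northcott} to $C$, I choose $x \in C(\overline{\mathbb Q})$ with $h_X(x)$, and hence $h_D(x)$, arbitrarily large; then the error $O(\sqrt{h_X(x)})$ is dominated by $h_D(x)$, so $\hat h_{D,f}(x) > 0$.

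For this $x$ the functional equation gives $\hat h_{D,f}(f^n(x)) = \delta_f^n\, \hat h_{D,f}(x)$, which grows exactly like $\delta_f^n$. Rewriting via $\hat h_{D,f} = h_D + O(\sqrt{h_X})$ and using the bound $h_X(f^n(x)) \leq C \delta_f^n n^r h_X(x)$ from Theorem \ref{thm_Mat}, the error $O(\sqrt{h_X(f^n(x))})$ is of order at most $\delta_f^{n/2} n^{r/2} = o(\delta_f^n)$, so $h_D(f^n(x)) \geq c\, \delta_f^n$ for some $c > 0$ and all sufficiently large $n$. Lemma \ref{lem2.1} then gives $h_X(f^n(x)) \geq (c/C_1)\delta_f^n$, where $C_1$ satisfies $h_D \leq C_1 h_X$, so $\alpha_f(x) \geq \delta_f$; combined with $\alpha_f(x) \leq \delta_f$ from Theorem \ref{thm_KSM}, this yields $\alpha_f(x) = \delta_f$. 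The hard part is the middle step: because $D$ is only nef, the vanishing locus of $\hat h_{D,f}$ could a priori be large, so Northcott cannot be applied on $X$ directly; restricting to a curve with positive $D$-degree and using the one-dimensional Northcott principle there is the key trick that breaks the deadlock.
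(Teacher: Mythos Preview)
The paper does not prove Theorem~\ref{thm_existence} itself; it is quoted from \cite[Theorem~1.6]{MSS17}. However, the paper proves the closely related Theorem~\ref{thm3.11} and explicitly says that argument is ``almost same as the proof of \cite[Theorem~1.6]{MSS17}''. Your proposal follows exactly this template: produce the nef eigendivisor $D$ via Theorem~\ref{thm_pfb}, form $\hat h_{D,f}$ via Theorem~\ref{thm_canht}(ii), restrict to a curve on which $D$ has positive degree so that $h_D$ becomes an ample height there, and use Northcott on that curve to force $\hat h_{D,f}>0$. So the approach is correct and essentially identical to the one the paper indicates.

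Two small points. First, your invocation of Theorem~\ref{thm_KSM} in the $\delta_f=1$ case is misplaced, since that theorem is stated only for $\delta_f>1$; use Theorem~\ref{thm_Mat}(i) instead, which gives $h_X(f^n(x))\leq Cn^{2r+2}h_X(x)$ and hence $\alpha_f(x)=1$ directly. Second, the curve $C$ you pick from ``$D\not\equiv 0$ implies $D\cdot C>0$ for some $C$'' need not be smooth, so calling $h_D|_C$ an ample height requires passing to the normalization. The paper's proof of Theorem~\ref{thm3.11} avoids this by choosing $C$ as a smooth complete intersection $H_1\cap\cdots\cap H_{\dim X-1}$ of members of a very ample system $|H|$, with $(D\cdot H^{\dim X-1})>0$ guaranteed by \cite[Lemma~20]{KaSi16a}; this also buys the extra freedom to keep $C$ out of any prescribed proper closed subset, which is what makes the density statement in Theorem~\ref{thm3.11} work.
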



\section{Ample canonical heights}\label{sec3}

In this section, we will define ample canonical heights for endomorphisms 
and prove some elementary properties.
In what follows, we always assume the smoothness of projective varieties for simplicity.

We define ample canonical heights as follows.

\begin{defn}\label{defn3.1}
Let $X$ be a smooth projective variety and $f$ an endomorphism on $X$.
\begin{itemize}
\item[(i)]
Let $l_f$ be the smallest non-negative integer such that the sequence 
$$\left\{ \frac{h_X(f^n(x))}{\delta_f^n n^{l_f}} \right\}_{n=0}^\infty$$
 is upper bounded for every 
$x \in X(\overline{\mathbb Q})$.
Theorem \ref{thm_Mat} guarantees the existence of such $l_f$.
\item[(ii)]
Set 
$$\overline h_f(x) = \limsup_{n \to \infty}  \frac{h_X(f^n(x))}{\delta_f^{n} n^{l_f}},\ \ 
\underline h_f(x) = \liminf_{n \to \infty} \frac{h_X(f^n(x))}{\delta_f^{n} n^{l_f}},$$
which we call \textit{upper ample canonical height for $f$},
\textit{lower ample canonical height for $f$},
respectively.
\end{itemize}
\end{defn}

\begin{rem}\label{rem3.1.1}
$\overline h_f$ and $\underline h_f$ depend on the choice of $h_X$.
If $\overline{h}'_f, \underline{h}'_f$ are upper and lower ample canonical heights 
associated to another ample height $h'_X$, then it is clear that 
$\overline h_f \asymp \overline{h}'_f$ and $\underline h_f \asymp \underline{h}'_f$.
In particular, the condition that $\overline h_f(x)=0$ (resp.~$\underline h_f(x)=0$)
is independent of the choice of $h_X$.
\end{rem}

\begin{defn}\label{defn3.1.2}
Let $X$ be a smooth projective variety and $f$ an endomorphism on $X$.
For any subfield $K \subset \overline{\mathbb Q}$, we set 
$$Z_f(K)=\{ x \in X(K) \mid \underline h_f(x)=0 \}.$$
\end{defn}

As we saw in Remark \ref{rem3.1.1}, $Z_f(K)$ is independent 
of the choice of $h_X$.
Proposition \ref{prop3.2} (iii) below shows that 
$Z_f(\overline{\mathbb Q})$ is an $f$-invariant subset i.e.~
$f(Z_f(\overline{\mathbb Q})) \subset Z_f(\overline{\mathbb Q})$.

\begin{ex}\label{ex3.4.1}
Let $X$ be a smooth projective variety and $f: X \to X$ a polarized endomorphism: 
$f^*H \sim dH$ for an ample divisor $H$ and $d >1$.
Then it follows that $\delta_f=d$.
Take a height $h_H$ associated to $H$ as satisfying $h_H \geq 1$.
Then $h_H \asymp h_X$.
So 
$$\limsup_{n \to \infty} d^{-n} h_X(f^n(x)) 
\asymp \lim_{n \to \infty} d^{-n} h_H(f^n(x))= \hat h_{H,f}(x).$$
This implies that $l_f=0$ and $\overline h_f \asymp \hat h_{H,f}$.
Similarly $\underline h_f \asymp \hat h_{H,f}$.
Thus $\overline h_f, \underline h_f$ are essentially equivalent to the canonical 
height $\hat h_{H,f}$ associated to $H$.
It follows that $Z_f(\overline{\mathbb Q})$ is the set of $f$-preperiodic points.
We will show a more general result in Theorem \ref{thm4.1}.
\end{ex}

\begin{ex}\label{ex3.4.2}
On the other hand, let 
$X$ be a smooth projective variety and $f: X \to X$ an endomorphism such that 
$f^n \neq \id_X$ for every $n \in \mathbb Z_{>0}$ and 
$f^*H \sim H$ for an ample divisor $H$ (e.g.~automorphisms of infinite order 
on projective spaces).
Then we have $\delta_f=1$.
As before,
take a height $h_H$ associated to $H$ as satisfying $h_H \geq 1$.
Since $h_H \circ f=h_H+O(1)$, we can take $C>0$ such that 
$|h_H \circ f -h_H| \leq C$.
Then 
$$|h_H \circ f^n| \leq \sum_{k=1}^n |h_H \circ f^k- h_H \circ f^{k-1}| +h_H
\leq nC+h_H.$$
Hence $\limsup_n n^{-1} h_H(f^n(x))<\infty$ and so $l_f \leq 1$.
On the other hand,
we can take a non-$f$-preperiodic point $x$ (cf.~\cite{Ame11}),
and then $\{ h_H(f^n(x)) \}_{n=0}^\infty$ 
is not upper bounded by the Northcott finiteness theorem.
So $l_f=1$.

Set $X=\mathbb P^1$ and $f(x:y)=(x+y:y)$.
Then $f^n(x:y)=(x+ny:y)$.
Fix a number field $K$ and take any point $P=(x:y) \in X(K)$.
Let $h$ be the usual height function on $X$ (cf.~\cite[B.2]{HiSi00}).
Then 
\begin{align*}
h(f^n(P))
&= \sum_{v \in M_K} \log \max \{| |x+ny||_v, ||y||_v \} \\
&\leq \sum_{v \in M_K} \log \max \{||x||_v, ||ny||_v, ||y||_v \}.
\end{align*}
Here $\lim_n n^{-1}\log ||ny||_v =\lim_n n^{-1} (\log n+ \log ||y||_v)=0$,
so $\overline h_f(P)= \limsup_n n^{-1} h(f^n(P))=0$.
Since $K$ and $P$ are arbitrary, $\overline h_f=\underline h_f=0$ and so 
$Z_f(\overline{\mathbb Q})=X(\overline{\mathbb Q})$.
Hence Conjecture \ref{conj_zf} and Conjecture \ref{conj_main} fail for $f$.

This example suggests that ample canonical heights do not work well 
for endomorphisms with 
dynamical degree one, or at least we should modify the definition of ample canonical heights 
for such endomorphisms.
\end{ex}

From now on, we will show some elementary results on ample canonical heights.
The following proposition is similar to \cite[Proposition 19]{Sil14}.

\begin{prop}\label{prop3.2}
Let $X$ be a smooth projective variety and $f$ an endomorphism on $X$.
\begin{itemize}
\item[(i)]
$\overline h_f$ and $\underline h_f$ are non-negative $\mathbb R$-valued functions.
\item[(ii)]
Assume that $\delta_f>1$ or $l_f>0$. Then 
$\overline h_f(x)=0$ for any $f$-preperiodic point $x \in X(\overline{\mathbb Q})$.
\item[(iii)]
$\overline h_f \circ f = \delta_f \overline h_f$, 
$\underline h_f \circ f = \delta_f \underline h_f$.
\item[(iv)]
For $x \in X(\overline{\mathbb Q})$, assume that $\overline h_f(x) >0$.
Then $\alpha_f(x)=\delta_f$.
\end{itemize}
\end{prop}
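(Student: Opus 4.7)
The plan is to attack the four parts in order, each by reducing to properties of $\limsup$/$\liminf$ and the height estimate Theorem \ref{thm_Mat}.

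For (i), nonnegativity is immediate since the defining sequence is a ratio of nonnegative quantities ($h_X \geq 1$ by convention). Finiteness of $\overline h_f(x)$ (and hence of $\underline h_f(x) \leq \overline h_f(x)$) is exactly what the choice of the exponent $l_f$ guarantees: by definition $l_f$ is the smallest exponent for which $\{ \delta_f^{-n} n^{-l_f} h_X(f^n(x)) \}_n$ is bounded for every $x$. For (ii), if $x$ is $f$-preperiodic then $O_f(x)$ is a finite set, so $h_X(f^n(x))$ is bounded by a constant $M(x)$. The ratio $M(x) / (\delta_f^n n^{l_f})$ tends to $0$ provided either $\delta_f > 1$ or $l_f \geq 1$, giving $\overline h_f(x)=0$.

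For (iii), I would do an index-shift computation. Writing
\[
\overline h_f(f(x))
= \limsup_{n \to \infty} \frac{h_X(f^{n+1}(x))}{\delta_f^n n^{l_f}}
= \limsup_{n \to \infty}\, \frac{h_X(f^{n+1}(x))}{\delta_f^{n+1}(n+1)^{l_f}} \cdot \delta_f \left(\frac{n+1}{n}\right)^{l_f},
\]
and using that the second factor converges to $\delta_f$ while the first factor is nonnegative, the multiplicative factor can be pulled outside the $\limsup$. Reindexing $m = n+1$ recognises the remaining limsup as $\overline h_f(x)$, so $\overline h_f(f(x)) = \delta_f\,\overline h_f(x)$. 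Exactly the same manipulation works with $\liminf$ in place of $\limsup$.

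For (iv), suppose $\overline h_f(x) > 0$. Then there exists $c>0$ and a subsequence $n_k \to \infty$ with $h_X(f^{n_k}(x)) \geq c\,\delta_f^{n_k} n_k^{l_f}$. Taking $n_k$-th roots,
\[
h_X(f^{n_k}(x))^{1/n_k} \geq c^{1/n_k}\,\delta_f\, n_k^{l_f/n_k} \longrightarrow \delta_f,
\]
and the limit $\alpha_f(x) = \lim_{n\to\infty} h_X(f^n(x))^{1/n}$ exists (by the cited result of Kawaguchi--Silverman recorded in the Notation), so $\alpha_f(x) \geq \delta_f$. The reverse inequality $\alpha_f(x) \leq \delta_f$ is Theorem \ref{thm_KSM}, giving equality. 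None of the steps look delicate; the only point requiring minor care is the manipulation of $\limsup$ with a multiplicative factor tending to a positive constant in (iii), which I would justify by the standard fact that $\limsup (a_n b_n) = (\lim b_n)(\limsup a_n)$ when $a_n \geq 0$ and $b_n$ has a positive limit.
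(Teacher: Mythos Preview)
Your proof is correct and follows essentially the same approach as the paper's: parts (i) and (ii) are dismissed as clear from the definitions, (iii) is done by the identical index-shift and factoring out $\delta_f\,(1+1/n)^{l_f}$, and (iv) uses the same idea of comparing $h_X(f^n(x))^{1/n}$ with $\delta_f$ via the existence of $\alpha_f(x)$ and the bound $\alpha_f(x)\leq\delta_f$ from Theorem~\ref{thm_KSM}. The only cosmetic difference is that in (iv) you extract an explicit subsequence while the paper argues the contrapositive directly from $\limsup_n\bigl(h_X(f^n(x))^{1/n}/(\delta_f n^{l_f/n})\bigr)^n$.
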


\begin{proof}
(i) and (ii) are clear by definition.

(iii) Take any $x \in X(\overline{\mathbb Q})$. Then 
\begin{align*}
\overline h_f(f(x))
&=\limsup_{n \to \infty} \frac{h_X(f^{n+1}(x))}{\delta_f^n n^{l_f}} \\
&=\limsup_{n \to \infty} \delta_f \left(1+\frac{1}{n} \right)^{l_f}
\frac{h_X(f^{n+1}(x))}{\delta_f^{n+1}(n+1)^{l_f}} \\
&= \delta_f \overline h_f(x).
\end{align*}
Similarly $\underline h_f(f(x))=\delta_f \underline h_f(x)$.

(iv) We compute 
$$\overline h_f(x)= \limsup_{n \to \infty} \frac{h_X(f^n(x))}{\delta_f^n n^{l_f}}
= \limsup_{n \to \infty} \left( \frac{h_X(f^n(x))^{1/n}}{\delta_f n^{l_f/n}} \right)^n.$$
Now it follows that 
$$\lim_{n \to \infty} \frac{h_X(f^n(x))^{1/n}}{\delta_f n^{l_f/n}} 
= \frac{\alpha_f(x)}{\delta_f}.$$
So $\alpha_f(x) < \delta_f$ implies that $\overline h_f(x) =0$.
\end{proof}

\begin{lem}\label{lem3.5}
Let $X$ be a smooth projective variety and $f$ an endomorphism on $X$.
Take a positive integer $N$.
\begin{itemize}
\item[(i)]
$\delta_{f^N}=\delta_f^N$, $l_{f^N}=l_f$.
\item[(ii)]
Let $K \subset \overline{\mathbb Q}$ be a subfield where $X$ and $f$ are defined.
Then 
$Z_f(K)=\bigcup_{i=0}^{N-1} (f^i)^{-1}(Z_{f^N}(K))$.
\item[(iii)]
Conjecture \ref{conj_zf} holds for $f$ if and only if it holds for $f^N$.
\end{itemize}
\end{lem}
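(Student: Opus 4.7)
The plan is to prove the three parts in the stated order, since (i) supplies the scaling constants that (ii) needs, and (ii) reduces (iii) to a topological claim about $f^{-i}(W)$.

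For (i), the identity $\delta_{f^N}=\delta_f^N$ is immediate from the defining limit of the dynamical degree. For $l_{f^N}=l_f$ I would use the equivalent characterization: $l_f$ is the smallest non-negative integer $l$ such that for every $x\in X(\overline{\mathbb{Q}})$ there exists $C_x>0$ with $h_X(f^n(x))\leq C_x\delta_f^n n^{l}$ for all $n\geq 1$. Substituting $n\mapsto Nn$ turns such a bound for $f$ into one for $f^N$ at the same exponent, so $l_{f^N}\leq l_f$. Conversely, for any $n\geq 1$ I would write $n=Nq+r$ with $0\leq r<N$, so $f^n(x)=(f^N)^q(f^r(x))$, and apply a bound for $f^N$ at exponent $l_{f^N}$; after absorbing the finitely many small values of $n$ and the factors $\delta_f^{-r}$ and $C_{f^r(x)}$ into a single constant $C_x'$, one obtains $h_X(f^n(x))\leq C_x'\delta_f^n n^{l_{f^N}}$ for all $n\geq 1$, yielding $l_f\leq l_{f^N}$.

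For (ii), I would decompose $\mathbb{N}$ into its $N$ residue classes modulo $N$ and apply the elementary identity $\liminf_n a_n=\min_{0\leq i<N}\liminf_m a_{Nm+i}$ to $a_n=h_X(f^n(x))/(\delta_f^n n^{l_f})$. Using $f^{Nm+i}(x)=(f^N)^m(f^i(x))$, part (i), and $(Nm+i)^{l_f}/(Nm)^{l_f}\to 1$, a direct computation gives
$$\underline{h}_f(x)=\min_{0\leq i<N}\frac{\underline{h}_{f^N}(f^i(x))}{\delta_f^i N^{l_f}}.$$
Hence $\underline{h}_f(x)=0$ if and only if $\underline{h}_{f^N}(f^i(x))=0$ for some $0\leq i<N$, which is precisely the stated equality (since $f$ is defined over $K$, each $f^i(x)$ lies in $X(K)$ whenever $x$ does).

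For (iii), the forward direction is immediate: any $f$-invariant proper closed $V$ is automatically $f^N$-invariant, and taking $i=0$ in the formula from (ii) gives $Z_{f^N}(K)\subset Z_f(K)\subset V$. For the converse, given an $f^N$-invariant proper closed $W\supset Z_{f^N}(K)$, I would set $V=\bigcup_{i=0}^{N-1}(f^i)^{-1}(W)$. Part (ii) yields $Z_f(K)\subset V$; the invariance $f^N(W)\subset W$ gives $f((f^i)^{-1}(W))\subset(f^{i-1})^{-1}(W)$ for $i\geq 1$ and $f(W)\subset(f^{N-1})^{-1}(W)$, so $f(V)\subset V$; and each $(f^i)^{-1}(W)$ is properly contained in $X$ since $f^i$ is surjective and $W\neq X$, so the finite union $V$ remains proper in the irreducible variety $X$. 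The most delicate step is the second half of (i), where a pointwise bound for $f^N$-iterates must be upgraded to a pointwise bound for all $f$-iterates without losing the polynomial exponent.
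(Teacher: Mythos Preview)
Your proposal is correct and follows essentially the same approach as the paper's proof. The paper packages the argument for (i) via the single identity $B_n^{(l)}(f^k(x))=(N+k/n)^l\delta_f^{-k}A_{Nn+k}^{(l)}(x)$ rather than proving the two inequalities $l_{f^N}\leq l_f$ and $l_f\leq l_{f^N}$ separately, and in (ii) the paper stops at the equivalence $\underline h_f(x)>0 \iff \underline h_{f^N}(f^k(x))>0$ for all $k$, whereas you extract the sharper formula $\underline h_f(x)=\min_{0\leq i<N}\underline h_{f^N}(f^i(x))/(\delta_f^i N^{l_f})$; but these are cosmetic differences, and part (iii) is identical.
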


\begin{proof}
(i) Take an ample divisor $H$ on $X$. Then 
\begin{align*}
\delta_{f^N}
&=\lim_{n \to \infty} ((f^{Nn})^*H \cdot H^{\dim X -1})^{1/n} \\
&=\lim_{n \to \infty} (((f^{Nn})^*H \cdot H^{\dim X -1})^{1/Nn})^N \\
&=\delta_f^N.
\end{align*}
Take any non-negative integer $l$. Set 
$$A_n^{(l)}(x)=  \frac{h_X(f^{n}(x))}{\delta_{f}^n n^l},\ 
B_n^{(l)}(x)= \frac{h_X(f^{Nn}(x))}{\delta_{f^N}^n n^l}.$$
Then 
$$B_n^{(l)}(f^k(x))= \frac{(Nn+k)^l}{n^l} 
\frac{h_X(f^{Nn+k}(x))}{\delta_{f}^{Nn} (Nn+k)^l}
=\left( N+\frac{k}{n} \right)^l \delta_f^{-k} A_{Nn+k}^{(l)}(x).$$
So 
$\{ A_n^{(l)}(x) \}_{n=0}^\infty$ is upper bounded if and only if 
$\{B_n^{(l)}(f^k(x)) \}_{n=0}^\infty$ is upper bounded for every $k \in \{0,1,\ldots,N-1\}$.
This implies that $l_{f^N}=l_f$.

(ii) Set $A_n(x)=A_n^{(l_f)}(x)$ and $B_n(x)=B_n^{(l_f)}(x)$.
The above calculation also shows that 
$\underline h_f(x)=\liminf_n A_n(x)>0$ if and only if 
$\underline h_{f^N}(f^k(x))=\liminf_n B_n(f^k(x))>0$ for every 
$k \in \{0,1,\ldots,N-1\}$.
So the assertion follows.

(iii) Let $K$ be any number field where $X$ and $f$ are defined.
If Conjecture \ref{conj_zf} holds for $f$, we can take an $f$-invariant 
proper closed subset $V \subset X$ such that $Z_f(K) \subset V(K)$.
Then $Z_{f^N}(K) \subset Z_f(K) \subset V(K)$ by (ii), and 
$V$ is clearly $f^N$-invariant.
So Conjecture \ref{conj_zf} holds for $f^N$.

Conversely, assume that Conjecture \ref{conj_zf} holds for $f^N$.
Take an $f^N$-invariant proper closed subset $W \subset X$ such that 
$Z_{f^N}(K) \subset W(K)$.
Then $Z_f(K) \subset \bigcup_{i=0}^{N-1} (f^i)^{-1}(W(K))$ by (ii).
For $x \in W$, we have 
$f^{N-1}(f(x))=f^N(x) \in f^N(W) \subset W$, so $f(x) \in (f^{N-1})^{-1}(W)$.
For $x \in f^{-i}(W)$ with $1 \leq i \leq N-1$, we have 
$f^{i-1}(f(x))=f^i(x) \in W$, so $f(x) \in (f^{i-1})^{-1}(W)$.
Thus $f(\bigcup_{i=0}^{N-1} (f^i)^{-1}(W)) \subset \bigcup_{i=0}^{N-1} (f^i)^{-1}(W)$.
Hence $\bigcup_{i=0}^{N-1} (f^i)^{-1}(W)$ is an $f$-invariant proper closed subset.
\end{proof}

We introduce the lexicographic order on the pairs $(\delta_f, l_f)$.

\begin{defn}\label{defn3.6}
For $(\delta_1, l_1), (\delta_2, l_2) \in \mathbb R_{\geq 1} \times \mathbb Z_{\geq 0}$,
$(\delta_1, l_1) \leq (\delta_2, l_2)$ if 
$\delta_1 < \delta_2$ holds, or $\delta_1=\delta_2$ and $l_1 \leq l_2$ hold.
\end{defn}

\begin{lem}\label{lem3.7}
Let $X, Y$ be smooth projective varieties and $f, g$ endomorphisms on $X, Y$, respectively.
\begin{itemize}
\item[(i)]
$(\delta_{f \times g}, l_{f \times g})=\max \{ (\delta_f, l_f), (\delta_g, l_g) \}$.

\item[(ii)]
$$
\overline h_{f \times g}(x, y) \asymp 
\begin{cases}
\overline h_f(x)\ \ \ \mathrm{if}\ (\delta_f, l_f) > (\delta_g, l_g), \\
\overline h_f(x) + \overline h_g(y)\ \ \ \mathrm{if}\ (\delta_f, l_f) = (\delta_g, l_g), \\
\overline h_g(y)\ \ \ \mathrm{if}\ (\delta_f, l_f) < (\delta_g, l_g).
\end{cases}
$$
The lower ample canonical height $\underline h_{f \times g}$ is similar.

\item[(iii)]
Let $K \subset \overline{\mathbb Q}$ be any subfield where $X,Y,f,g$ are defined.
Then 
$$
Z_{f \times g}(K)=
\begin{cases}
Z_f(K) \times Y(K) \ \ \ \mathrm{if}\ (\delta_f, l_f) > (\delta_g, l_g), \\
Z_f(K) \times Z_g(K)
\ \ \ \mathrm{if}\ (\delta_f, l_f) = (\delta_g, l_g), \\
X(K) \times Z_g(K)\ \ \ \mathrm{if}\ (\delta_f, l_f) < (\delta_g, l_g).
\end{cases}
$$
\end{itemize}
\end{lem}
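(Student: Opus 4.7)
The plan is to pass from heights on $X \times Y$ to sums of heights on $X$ and $Y$ and apply elementary $\limsup/\liminf$ inequalities for sums of non-negative sequences. Fix ample heights $h_X, h_Y \ge 1$ associated to ample divisors $H_X, H_Y$. The height attached to the ample divisor $H = \pi_X^\ast H_X + \pi_Y^\ast H_Y$ on $X \times Y$ satisfies $h_H(x,y) = h_X(x) + h_Y(y) + O(1)$; by Remark~\ref{rem3.1.1}, adopting this $h_H$ changes $\overline h_{f \times g}$ and $\underline h_{f \times g}$ only up to $\asymp$ and leaves $Z_{f \times g}(K)$ unaffected. Combined with $(f \times g)^n = f^n \times g^n$, the basic identity
\[
h_H\bigl((f \times g)^n(x,y)\bigr) = h_X(f^n x) + h_Y(g^n y) + O(1)
\]
drives the whole computation.

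For (i), I would use the K\"unneth-type decomposition $N^1(X \times Y)_{\mathbb R} = \pi_X^\ast N^1(X)_{\mathbb R} \oplus \pi_Y^\ast N^1(Y)_{\mathbb R}$, under which $(f \times g)^\ast$ acts as $f^\ast \oplus g^\ast$; since $\delta_f$ equals the spectral radius of $f^\ast$ on $N^1_{\mathbb R}$ for any surjective endomorphism of a smooth projective variety, one obtains $\delta_{f \times g} = \max(\delta_f, \delta_g)$. Writing $\delta = \delta_{f \times g}$ and letting $l$ denote the second coordinate of $\max\{(\delta_f, l_f),(\delta_g, l_g)\}$, Theorem~\ref{thm_Mat} applied separately to $f$ and $g$ shows that $\delta^{-n} n^{-l}(h_X(f^n x) + h_Y(g^n y))$ is bounded for each $(x,y)$ (the smaller term tending to $0$), while for any $l' < l$ the definition of whichever of $l_f, l_g$ realizes the maximum supplies a point on which the sum is unbounded; hence $l_{f \times g}$ is the claimed maximum. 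For (ii), put $a_n = \delta^{-n} n^{-l} h_X(f^n x)$ and $b_n = \delta^{-n} n^{-l} h_Y(g^n y)$: in the two unequal cases Theorem~\ref{thm_Mat} forces the smaller sequence to tend to $0$, so $\limsup(a_n+b_n)$ and $\liminf(a_n+b_n)$ each reduce to those of the dominant sequence; in the equal case the chain
\[
\max(\limsup a_n, \limsup b_n) \le \limsup(a_n+b_n) \le \limsup a_n + \limsup b_n
\]
yields $\overline h_{f \times g} \asymp \overline h_f + \overline h_g$ at once, while $\liminf a_n + \liminf b_n \le \liminf(a_n+b_n)$ supplies one half of the analogous lower statement.

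For (iii), the forward inclusions are clean: by non-negativity, $\liminf(a_n+b_n) = 0$ forces a common subsequence along which both $a_n, b_n \to 0$, so $\underline h_{f \times g}(x,y) = 0$ implies $\underline h_f(x) = \underline h_g(y) = 0$ in the equal case (and analogues in the unequal cases follow similarly from part (ii)). The main obstacle is the reverse inclusion $Z_f(K) \times Z_g(K) \subseteq Z_{f \times g}(K)$ in the equal case: $\liminf$ is not subadditive from above, so \emph{a priori} two sequences can realize $\liminf = 0$ along disjoint subsequences while their sum has positive $\liminf$. This is precisely Question~\ref{ques1}(i) in disguise; one closes the argument by exploiting the regularity of the canonical sequences (in all concrete settings ultimately treated in this paper one has $\overline h_f = \underline h_f$, so the limit $\lim_n \delta^{-n} n^{-l} h_X(f^n x)$ actually exists, and the subsequence witnessing $\liminf = 0$ for $a_n$ may then be synchronized with one for $b_n$), thereby forcing $\liminf(a_n + b_n) = 0$ as required.
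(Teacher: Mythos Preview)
Your approach is the same as the paper's---reduce to $h_{X\times Y}\asymp h_X\circ p+h_Y\circ q$ and analyze the resulting sum---but two points need correction or comment.

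First, the K\"unneth-type splitting $N^1(X\times Y)_{\mathbb R}=\pi_X^\ast N^1(X)_{\mathbb R}\oplus\pi_Y^\ast N^1(Y)_{\mathbb R}$ fails in general: for an elliptic curve $E$ one has $\rho(E\times E)\ge 3$ (the diagonal contributes an independent class) while the right-hand side has dimension $2$. So your computation of $\delta_{f\times g}$ as the spectral radius on that direct sum is invalid, even though the conclusion $\delta_{f\times g}=\max(\delta_f,\delta_g)$ is correct; the paper gets it by citing the product formula for dynamical degrees \cite{Tru15}. The rest of your argument for $l_{f\times g}$ matches the paper's.

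Second, you are right that the equal case of (ii) for $\underline h$ and of (iii) hides a real issue: from $\liminf a_n=\liminf b_n=0$ one cannot in general infer $\liminf(a_n+b_n)=0$, so neither the upper half of $\underline h_{f\times g}\asymp\underline h_f+\underline h_g$ nor the inclusion $Z_f(K)\times Z_g(K)\subseteq Z_{f\times g}(K)$ follows from the sequence-level $\asymp$ alone. The paper's proof is terser than yours here---it writes only ``similar'' for $\underline h$ and ``(iii) follows from (ii)''---and does not confront this point. Your workaround, appealing to the equality $\overline h=\underline h$ established in the concrete cases of the paper, is an honest observation but does not prove the lemma in the stated generality. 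It is worth noting, however, that the only place Lemma~\ref{lem3.7} is actually invoked later (Step~2 of the proof of Theorem~\ref{thm5.1}) is in the strict case $\delta_{f_1}>1=\delta_{f_2}$, where $b_n\to 0$ and the difficulty does not arise.
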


\begin{proof}
We may assume that $(\delta_f, l_f) \geq (\delta_g, l_g)$ without loss of generality.
Then $\delta_{f \times g}=\max \{ \delta_f, \delta_g \} = \delta_f$ 
by the product formula (cf.~\cite{Tru15}).

Let $p: X \times Y \to X$, $q: X \times Y \to Y$ be the projections.
Since $h_X \circ p + h_Y \circ q$ is an ample height on $X \times Y$,
$h_{X \times Y} \asymp h_X \circ p + h_Y \circ q$.
Take any non-negative integer $l$.
\begin{align*}
\frac{h_{X \times Y}((f \times g)^n(x,y))}{\delta_{f \times g}^n n^l}
&= \frac{h_{X \times Y}(f^n(x), g^n(y))}{\delta_f^n n^l} \\
&\asymp \frac{h_X(f^n(x))}{\delta_f^n n^{l}} 
+ \left( \frac{\delta_g}{\delta_f} \right)^n  \frac{h_Y(g^n(y))}{\delta_g^n n^{l}}.
\end{align*}

If $\delta_g < \delta_f$, then 
$\lim_n \left( \frac{\delta_g}{\delta_f} \right)^n \frac{h_Y(g^n(y))}{\delta_g^n n^{l}} =0$ 
for any $l$,
so $l_{f \times g}=l_f$ and $\overline h_{f \times g}(x,y) \asymp \overline h_f(x)$.

Assume that $\delta_f=\delta_g$. Then 
$$ \frac{h_{X \times Y}((f \times g)^n(x,y))}{\delta_{f \times g}^n n^l} 
\asymp \frac{h_X(f^n(x))}{\delta_f^n n^{l}} 
+ \frac{h_Y(g^n(y))}{\delta_g^n n^{l}}.$$
So $l_{f \times g}= \max \{ l_f, l_g \}=l_f$ and 
$$\overline h_{f \times g}(x,y) \asymp
\begin{cases}
\overline h_f(x)\ \ \ \mathrm{if}\ l_f > l_g, \\
\overline h_f(x) + \overline h_g(y)\ \ \ \mathrm{if}\ l_f = l_g.
\end{cases}
$$
Thus (i) and (ii) hold.
(iii) follows from (ii).
\end{proof}

\begin{lem}\label{lem3.8}
Let $X, Y$ be smooth projective varieties, $f, g$ endomorphisms on $X, Y$, respectively,
and $\pi: X \to Y$ a surjective morphism such that $\pi \circ f = g \circ \pi$.
\begin{itemize}
\item[(i)]
$(\delta_f, l_f) \geq (\delta_g, l_g)$.

\item[(ii)]
Assume that $(\delta_f, l_f)=(\delta_g, l_g)$.
Then $\overline h_g \circ \pi \prec \overline h_f$ 
and $\underline h_g \circ \pi \prec \underline h_f$.
In particular, let $K \subset \overline{\mathbb Q}$ be any subfield where 
all concerned are defined, then 
$Z_f(K) \subset \pi^{-1}(Z_g(K))$.

\item[(iii)]
Assume that $\pi$ is finite.
Then $(\delta_f, l_f)=(\delta_g, l_g)$, $\overline h_g \circ \pi \asymp \overline h_f$ 
and $\underline h_g \circ \pi \asymp \underline h_f$.
In particular,
$Z_f(\overline{\mathbb Q}) = \pi^{-1}(Z_g(\overline{\mathbb Q}))$.
\end{itemize}
\end{lem}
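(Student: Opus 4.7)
The central observation that powers all three parts is a single height comparison. Let $H_Y$ be an ample divisor on $Y$ with associated height $h_Y$. Since $\pi^{*}H_Y$ is nef on $X$, Lemma~\ref{lem2.1} yields constants $C, C' > 0$ (independent of $n$) such that, for every $x \in X(\overline{\mathbb Q})$ and every $n \geq 0$,
\[
h_Y(g^n(\pi(x))) \;=\; h_Y(\pi(f^n(x))) \;\leq\; C\, h_X(f^n(x)) + C'.
\]
I will refer to this inequality as $(\ast)$; everything reduces to extracting the appropriate asymptotic information from it.

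For part (i), I would first invoke Theorem~\ref{thm_existence} to pick $y_0 \in Y(\overline{\mathbb Q})$ with $\alpha_g(y_0) = \delta_g$, lift it to some $x_0 \in \pi^{-1}(y_0)$ (possible by surjectivity of $\pi$ over $\overline{\mathbb Q}$), and take $n$-th roots and limits in $(\ast)$; combined with the bound $\alpha_f(x_0) \leq \delta_f$ coming from Theorem~\ref{thm_KSM}, this yields $\delta_g \leq \delta_f$. If $\delta_f = \delta_g =: \delta$, then dividing $(\ast)$ by $\delta^n n^{l_f}$ shows that $\{\delta^{-n} n^{-l_f} h_Y(g^n(\pi(x)))\}_n$ is bounded for every $x$ (the first term is bounded by the definition of $l_f$; the tail $C'/(\delta^n n^{l_f})$ is always bounded since $\delta \geq 1$). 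As $\pi$ is surjective, every $y \in Y(\overline{\mathbb Q})$ is some $\pi(x)$, so minimality of $l_g$ forces $l_g \leq l_f$.

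For part (ii), under the hypothesis $(\delta_f, l_f) = (\delta_g, l_g) = (\delta, l)$, I would simply divide $(\ast)$ by $\delta^n n^l$ and take $\limsup$ and $\liminf$; absorbing the additive constant $C'$ by using $h_X \geq 1$ yields $\overline h_g \circ \pi \prec \overline h_f$ and $\underline h_g \circ \pi \prec \underline h_f$, from which $Z_f(K) \subset \pi^{-1}(Z_g(K))$ is immediate.

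For part (iii), the crucial new input is that when $\pi$ is finite, $\pi^{*}H_Y$ is actually \emph{ample} on $X$ (not merely nef), so $h_Y \circ \pi$ is an ample height on $X$, and hence $h_Y \circ \pi \asymp h_X$. This reverses $(\ast)$ up to multiplicative constants, so the boundedness analysis in (i) runs symmetrically to give $(\delta_f, l_f) = (\delta_g, l_g)$, and the estimates in (ii) hold in both directions, producing $\overline h_g \circ \pi \asymp \overline h_f$ and $\underline h_g \circ \pi \asymp \underline h_f$; the equality $Z_f(\overline{\mathbb Q}) = \pi^{-1}(Z_g(\overline{\mathbb Q}))$ is then immediate from these equivalences together with part (ii). The only mild bookkeeping issue throughout is controlling the additive $O(1)$ term after dividing by $\delta^n n^l$, but the normalization $h_X \geq 1$ lets one uniformly absorb it into a larger multiplicative constant, so I do not anticipate a genuine obstacle.
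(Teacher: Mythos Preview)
Your argument is correct and, for parts (ii) and (iii), essentially identical to the paper's: both proofs rest on the single inequality $h_Y\circ\pi \leq C\,h_X$ coming from Lemma~\ref{lem2.1} (your additive $C'$ can in fact be absorbed into the multiplicative constant immediately, since $h_X \geq 1$, so the bookkeeping you flag is not even needed), and for (iii) both use that $\pi^*H_Y$ is ample when $\pi$ is finite, so that $h_Y\circ\pi \asymp h_X$ and the inequalities reverse.

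The one genuine difference is how you obtain $\delta_g \leq \delta_f$ in part (i). The paper simply invokes the product formula for dynamical degrees (citing \cite{Tru15}) to get $\delta_f \geq \delta_g$ in one line, and likewise cites it again in (iii) to get equality when $\pi$ is finite. You instead give an arithmetic argument: pick $y_0$ with $\alpha_g(y_0)=\delta_g$ via Theorem~\ref{thm_existence}, lift it through $\pi$, and use the height comparison together with $\alpha_f \leq \delta_f$ (Theorem~\ref{thm_KSM}). Your route is more self-contained, drawing only on results already assembled in Section~\ref{sec2}, whereas the paper's route is quicker but imports a nontrivial external fact about dynamical degrees under semiconjugacy. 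Both are perfectly valid; yours has the mild advantage of keeping everything internal to the height machinery.
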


\begin{proof}
Since $h_Y \circ \pi \prec h_X$ (cf.~Lemma \ref{lem2.1}), 
there is a positive constant $C$ such that $h_Y \circ \pi \leq Ch_X$.

(i) The product formula implies that $\delta_f \geq \delta_g$ (cf.~\cite{Tru15}).
If $\delta_f > \delta_g$, then $(\delta_f, l_f) > (\delta_g, l_g)$.
Assume that $\delta_f=\delta_g$.
Take any $y \in Y(\overline{\mathbb Q})$.
We can take $x \in \pi^{-1}(y)$. Then 
$$\frac{h_Y(g^n(y))}{\delta_g^n n^{l_f}} = \frac{h_Y(\pi f^n(x))}{\delta_f^n n^{l_f}}
\leq C \frac{h_X(f^n(x))}{\delta_f^n n^{l_f}}.$$
So $\left\{ \frac{h_Y(g^n(y))}{\delta_g^n n^{l_f}} \right\}_{n=0}^\infty$ 
is upper bounded and therefore $l_g \leq l_f$.

(ii) By assumption,
$$\frac{h_Y(g^n \pi(x))}{\delta_g^n n^{l_g}}
= \frac{h_Y(\pi f^n(x))}{\delta_f^n n^{l_f}}
\leq C \frac{h_X(f^n(x))}{\delta_f^n n^{l_f}}.$$
So $\overline h_g \circ \pi \leq C \overline h_f$ and 
$\underline h_g \circ \pi \leq C \underline h_f$.

(iii) Since $\pi$ is finite, we have $\delta_f=\delta_g$ and $h_Y \circ \pi \asymp h_X$.
So we can take a positive constant $C'$ such that $h_X \leq C' h_Y \circ \pi$.
Then 
$$\frac{h_X(f^n(x))}{\delta_f^n n^{l_g}} \leq C' \frac{h_Y(g^n \pi(x))}{\delta_g^n n^{l_g}}.$$
So $l_f \leq l_g$. Combining with (i), we obtain $l_f=l_g$.
By the above inequality, $\overline h_f \leq C' \overline h_g \circ \pi$ and 
$\underline h_f \leq C' \underline h_g \circ \pi$.
Combining with (ii), we obtain $\overline h_f \asymp \overline h_g \circ \pi$ and 
$\underline h_f \asymp \underline h_g \circ \pi$.
\end{proof}


The following is a version of the Chevalley--Weil theorem 
(see e.g.~\cite[4.2]{Ser97} and \cite[Exercise C.7]{HiSi00}).

\begin{thm}[Chevalley--Weil]\label{thm_CW}
Let $X,Y$ be normal projective varieties and $\phi: X \to Y$ an  
\'etale morphism 
which are defined over a number field $K$.
Then there is a finite extension $L$ of $K$ such that 
$\phi^{-1}(Y(K)) \subset X(L)$.
In particular, $X$ is potentially dense if and only if $Y$ is potentially dense.
\end{thm}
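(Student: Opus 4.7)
The plan is to prove the Chevalley--Weil statement by spreading out everything over a ring of $S$-integers, then extracting a uniform bound on the ramification of the residue fields of points in fibers, and finally invoking the Hermite--Minkowski finiteness theorem for number fields with bounded degree and restricted ramification.

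First I would choose integral models. Enlarge $K$ if necessary so that $X$, $Y$ and $\phi$ are all defined over $K$. Pick a finite set $S$ of places of $K$ (containing all archimedean ones) and proper flat models $\mathcal{X},\mathcal{Y}$ over $\mathrm{Spec}\,\mathcal{O}_{K,S}$ with generic fibers $X,Y$, such that the generic étale morphism $\phi$ extends to an étale morphism $\Phi:\mathcal{X}\to\mathcal{Y}$; this is possible because étaleness is an open condition on the base and we can shrink by inverting finitely many primes. Let $d$ denote the degree of $\phi$.

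Next, the key step. Take any $y\in Y(K)$. Since $\mathcal{Y}\to\mathrm{Spec}\,\mathcal{O}_{K,S}$ is proper and $Y$ is normal, the valuative criterion gives a unique section $\tilde y:\mathrm{Spec}\,\mathcal{O}_{K,S}\to\mathcal{Y}$ extending $y$. Then $\tilde y^{*}\mathcal{X}\to\mathrm{Spec}\,\mathcal{O}_{K,S}$ is a finite étale cover of degree $d$. Each point $x\in\phi^{-1}(y)$ corresponds to a closed point of $\tilde y^{*}\mathcal{X}$, and the residue field extension $K(x)/K$ therefore satisfies $[K(x):K]\le d$ and is unramified at every prime outside $S$. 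Here I am using the fact that an étale cover of a Dedekind scheme is a disjoint union of spectra of rings of integers of number fields that are unramified outside $S$.

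Now apply Hermite--Minkowski: there are only finitely many number fields contained in a fixed algebraic closure that have degree at most $d$ over $K$ and are unramified outside $S$. Call them $L_{1},\dots,L_{m}$ and put $L=L_{1}\cdots L_{m}$. Then for every $y\in Y(K)$ and every $x\in\phi^{-1}(y)$ we have $K(x)\subset L$, hence $\phi^{-1}(Y(K))\subset X(L)$.

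For the ``in particular'' clause, note that $\phi$ is finite and surjective (surjective because it is étale, hence open, and also proper, hence closed, so its image is all of $Y$). If $Y(K')$ is Zariski-dense in $Y$ for some finite extension $K'/K$, apply the above to $K'$ to obtain a finite extension $L/K'$ with $\phi^{-1}(Y(K'))\subset X(L)$; since $\phi$ is finite surjective, $\phi^{-1}(Y(K'))$ is Zariski-dense in $X$, so $X(L)$ is dense. Conversely, if $X(L)$ is dense in $X$, then $\phi(X(L))\subset Y(L)$ is dense in $Y$. The main obstacle, and the only nontrivial input, is the Hermite--Minkowski finiteness argument together with the verification that étaleness at the generic fiber can be spread out to an étale integral model; everything else is the routine manipulation of valuative criterion and spreading-out.
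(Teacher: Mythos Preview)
Your proof is correct and is essentially the standard argument for the Chevalley--Weil theorem. Note, however, that the paper does not actually prove this theorem: it is stated with references to \cite[4.2]{Ser97} and \cite[Exercise C.7]{HiSi00} and used as a black box, so there is no ``paper's own proof'' to compare against. The approach you outline --- spreading out to an \'etale integral model over a ring of $S$-integers, using the valuative criterion to extend rational points to integral sections, observing that the fiber is finite \'etale over $\mathrm{Spec}\,\mathcal{O}_{K,S}$ so that the residue fields are unramified outside $S$ of bounded degree, and then applying Hermite--Minkowski --- is exactly the argument found in those references. One small cosmetic point: in extending $y$ to a section, what you really use is properness (existence) and separatedness (uniqueness) of $\mathcal{Y}/\mathrm{Spec}\,\mathcal{O}_{K,S}$; the normality of $Y$ plays no role at that step.
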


\begin{lem}\label{lem3.8.1}
Let $X,Y$ be smooth projective varieties, $f,g$ endomorphisms on $X,Y$ respectively,
and $\phi: X \to Y$ be a finite morphism such that $\phi \circ f=g \circ \phi$.
\begin{itemize}
\item[(i)]
Conjecture \ref{conj_zf} holds for $f$ if it holds for $g$.
\item[(ii)]
Assume that $\phi$ is \'etale.
Then Conjecture \ref{conj_zf} holds for $f$ if and only if it holds for $g$.
\end{itemize}
\end{lem}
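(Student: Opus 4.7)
For both parts the strategy is to transfer the proper closed subset required by Conjecture~\ref{conj_zf} across $\phi$, using Lemma~\ref{lem3.8}(iii) (which gives $Z_f(\overline{\mathbb Q}) = \phi^{-1}(Z_g(\overline{\mathbb Q}))$) together with the equivariance $\phi \circ f = g \circ \phi$, while carefully tracking fields of definition.

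For (i), fix a number field $K$ large enough that $X, Y, f, g, \phi$ are all defined over it. By the hypothesis for $g$, choose a $g$-invariant proper closed subset $W \subset Y$ with $Z_g(K) \subset W(K)$, and set $V = \phi^{-1}(W)$. Since $\phi$ is finite and surjective and $W \subsetneq Y$, $V$ is a proper closed subset of $X$; the equivariance gives $f(V) \subset \phi^{-1}(g(W)) \subset \phi^{-1}(W) = V$. For any $x \in Z_f(K)$ we have $x \in Z_f(\overline{\mathbb Q})$, so Lemma~\ref{lem3.8}(iii) yields $\phi(x) \in Z_g(\overline{\mathbb Q})$, and since $\phi$ is a $K$-morphism we also have $\phi(x) \in Y(K)$, hence $\phi(x) \in Z_g(K) \subset W$ and $x \in V$.

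For (ii), the ``if'' direction is (i). For the converse, assume Conjecture~\ref{conj_zf} holds for $f$ and $\phi$ is étale. Fix a number field $K$ of definition. By Theorem~\ref{thm_CW} there is a finite extension $L/K$ with $\phi^{-1}(Y(K)) \subset X(L)$. Apply the hypothesis for $f$ to obtain an $f$-invariant proper closed subset $V \subset X$ with $Z_f(L) \subset V(L)$, and set $W = \phi(V)$. Then $W$ is closed since $\phi$ is proper, $g$-invariant because $g(W) = g(\phi(V)) = \phi(f(V)) \subset \phi(V) = W$, and a proper subset of $Y$ because the irreducibility of $X$ forces $\dim V < \dim X = \dim Y$ and $\phi$ is finite so $\dim \phi(V) = \dim V$. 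For $y \in Z_g(K)$, pick any $x \in \phi^{-1}(y)$; Chevalley--Weil gives $x \in X(L)$, while Lemma~\ref{lem3.8}(iii) gives $x \in Z_f(\overline{\mathbb Q})$, so $x \in Z_f(L) \subset V$ and $y = \phi(x) \in W$.

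The only substantive subtlety lies in (ii): the étale hypothesis is invoked solely to guarantee, via Chevalley--Weil, that the preimages of $K$-rational points of $Y$ are contained in a single number field $L$. Without étaleness, $\phi^{-1}(Y(K))$ could meet number fields of unbounded degree, and it would not be possible to apply Conjecture~\ref{conj_zf} for $f$ to one fixed $L$ to control all of $Z_g(K)$. Every other ingredient is a formal consequence of Lemma~\ref{lem3.8}(iii) and the equivariance of $\phi$.
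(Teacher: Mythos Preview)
Your proposal is correct and follows essentially the same approach as the paper: for (i) you pull back the $g$-invariant closed set along $\phi$, and for (ii) you push forward the $f$-invariant closed set after using Chevalley--Weil to control the field of definition of preimages. The only cosmetic difference is that the paper cites Lemma~\ref{lem3.8}(ii) rather than (iii) for part (i), but since $\phi$ is finite these yield the same containment $Z_f(K) \subset \phi^{-1}(Z_g(K))$; your added verifications (properness of $\phi(V)$ via dimension, closedness via properness of $\phi$) simply spell out what the paper leaves as ``clearly''.
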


\begin{proof}
Let $K$ be any number field where all concerned are defined.
Take any positive integer $d$.

(i) 
By assumption, there is a $g$-invariant proper closed subset $W \subset Y$ such that 
$Z_g(K) \subset W(K)$.
Then $Z_f(K) \subset \phi^{-1}(Z_g(K)) \subset \phi^{-1}(W(K))$ 
by Lemma \ref{lem3.8} (ii) and 
$\phi^{-1}(W)$ is an $f$-invariant proper closed subset of $X$.

(ii) Assume that Conjecture \ref{conj_zf} holds for $f$.
By Theorem \ref{thm_CW},
there is a finite extension $L$ of $K$ such that $\phi^{-1}(Y(K)) \subset X(L)$.
We can take an $f$-invariant proper closed subset $V \subset X$ satisfying 
$Z_f(L) \subset V$.
Take any $y \in Z_g(K)$.
Then we can take $x \in X(L)$ such that $\phi(x)=y$.
Lemma \ref{lem3.8} (iii) implies that 
$x \in Z_f(\overline{\mathbb Q}) \cap X(L)=Z_f(L) \subset V$.
So $y=\phi(x) \in \phi(V)$.
Thus $Z_g(K) \subset \phi(V)$.
Clearly $\phi(V)$ is a $g$-invariant proper closed subset of $Y$.
\end{proof}

For an endomorphism $f$ with $\delta_f>1$ and $l_f=0$,
the following holds.

\begin{thm}\label{thm3.11}
Let $X$ be a smooth projective variety and $f$ an endomorphism on $X$ with 
$\delta_f>1$ and $l_f=0$.
Then  
$X(\overline{\mathbb Q}) \setminus Z_f(\overline{\mathbb Q})$ is dense 
in $X$.
\end{thm}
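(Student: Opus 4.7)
My plan is to exploit the nef canonical height associated to an eigen-divisor of $f^*$ at eigenvalue $\delta_f$. By Theorem \ref{thm_pfb}, fix a nef $\mathbb R$-Cartier $\mathbb R$-divisor $D$ on $X$ with $D \not\equiv 0$ and $f^*D \equiv \delta_f D$. Since $\delta_f > 1$, we have $\delta_f > \sqrt{\delta_f}$, so Theorem \ref{thm_canht} (ii) yields the nef canonical height $\hat h_{D,f}$ satisfying $\hat h_{D,f} = h_D + O(\sqrt{h_X})$. By Lemma \ref{lem2.1} there is $c>0$ with $h_D \leq c\,h_X$; dividing $h_D \circ f^n \leq c\,h_X \circ f^n$ by $\delta_f^n$ and taking $\liminf$ (using $l_f = 0$) gives the key bound $\hat h_{D,f}(x) \leq c\,\underline h_f(x)$ for every $x \in X(\overline{\mathbb Q})$. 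In particular $\hat h_{D,f}(x) > 0$ already forces $x \notin Z_f(\overline{\mathbb Q})$, so it suffices to prove that $\{x \in X(\overline{\mathbb Q}) : \hat h_{D,f}(x) > 0\}$ is Zariski dense in $X$.

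Given a nonempty Zariski open $U \subset X$, the strategy is to exhibit a smooth irreducible curve $C \subset X$ with $C \cap U \neq \emptyset$ and $D \cdot C > 0$. If $\dim X = 1$, take $C = X$. Otherwise fix an ample $H$ (used to define $h_X$) and, for $n$ sufficiently large, take $C$ to be a general complete intersection of $\dim X - 1$ members of $|nH|$. By Bertini, $C$ is smooth and irreducible for generic choices, and the condition $C \cap U \neq \emptyset$ is a nonempty open condition on the parameter space (witnessed by any complete intersection through a chosen point of $U$); both can thus be satisfied simultaneously. Then $C \cap U$ is a cofinite subset of $C$, and one computes
$$D \cdot C = n^{\dim X - 1}\,(D \cdot H^{\dim X - 1}) > 0,$$
using that for a nonzero nef $\mathbb R$-divisor $D$ and an ample $H$ one has $D \cdot H^{\dim X - 1} > 0$ (a standard consequence of Kleiman's criterion; see e.g.\ Lazarsfeld, \emph{Positivity in Algebraic Geometry}, Cor.~1.4.29).

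On this $C$, set $\alpha = D \cdot C > 0$ and $\beta = H \cdot C > 0$, and fix a degree-one ample height $h_{C,O} \geq 0$ on $C$. Standard height theory on curves gives
$$h_D|_C = \alpha\,h_{C,O} + O(\sqrt{h_{C,O}}), \qquad h_X|_C = \beta\,h_{C,O} + O(\sqrt{h_{C,O}}),$$
with the error terms controlled via the N\'eron--Tate pairing on the Jacobian. Since $h_{C,O}$ is unbounded on $C(\overline{\mathbb Q})$ while the finite set $C \setminus U$ has bounded $h_{C,O}$-values, there exist points $y \in (C \cap U)(\overline{\mathbb Q})$ with $h_{C,O}(y)$ arbitrarily large. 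For such $y$,
$$\hat h_{D,f}(y) \geq h_D(y) - C' \sqrt{h_X(y)} \geq \alpha\,h_{C,O}(y) - O(\sqrt{h_{C,O}(y)}),$$
which is strictly positive for $h_{C,O}(y)$ sufficiently large, producing the desired $y \in U \setminus Z_f(\overline{\mathbb Q})$. The main delicate step is the simultaneous arrangement of $C \cap U \neq \emptyset$ and $D \cdot C > 0$; once $C$ is in hand, the rest is a routine height comparison using Theorem \ref{thm_canht} (ii) together with the unboundedness of heights on $C(\overline{\mathbb Q})$.
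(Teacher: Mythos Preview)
Your proof is correct and follows essentially the same route as the paper: reduce to showing $\hat h_{D,f}>0$ on a dense set via the inequality $\hat h_{D,f} \leq c\,\underline h_f$, then restrict to a general complete-intersection curve $C$ (meeting any prescribed open set) with $D\cdot C>0$, and use growth of heights on $C$ to produce points with $\hat h_{D,f}>0$. The only difference is cosmetic: the paper notes that $D|_C$ has positive degree and is therefore ample on $C$, so $h_D|_C$ is directly comparable to any fixed ample height $h_C$ up to $O(1)$, which avoids your appeal to the N\'eron--Tate pairing on the Jacobian.
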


\begin{proof}
The proof is almost same as the proof of \cite[Theorem 1.6]{MSS17}.

Using Theorem \ref{thm_pfb}, 
take a nef $\mathbb R$-divisor $D$ on $X$ such that $D \not\equiv 0$ and 
$f^*D \equiv \delta_fD$.
Since $h_D \prec h_X$, we can take $M_1 >0$ such that $h_D \leq M_1 h_X$.
Then 
$$\hat h_{D,f}= \lim_{n \to \infty} \frac{h_D \circ f^n}{\delta_f^n}
\leq \liminf_{n \to \infty} M_1 \frac{h_X \circ f^n}{\delta_f^n} 
= M_1 \underline h_f.$$
So $\underline h_f(x)>0$ if $\hat h_{D,f}(x)>0$.

Take any proper closed subset $V \subset X$ and a very ample divisor $H$ on $X$.
By \cite[Lemma 20]{KaSi16a}, it follows that $(D \cdot H^{\dim X-1})>0$.
Take $H_1,\ldots,H_{\dim X -1} \in |H|$ such that 
$C=H_1 \cap \cdots \cap H_{\dim X-1}$ is a smooth curve with $C \not\subset V$.
Since $\hat h_{D,f}(x)=h_D+O(\sqrt{h_X})$, we can take $M_2>0$ such that 
$\hat h_{D,f} \geq h_D - M_2 \sqrt{h_X}$.
Now $h_D|_C, h_X|_C$ are ample heights and $h_X|_C \geq 1$,
so we can take $M_3, M_4, M_5 >0$ such that 
$h_D|_C \geq M_3 h_C - M_4$, $h_X|_C \geq M_5 h_C$.
Then 
$$\hat h_{D,f}|_C \geq M_3 h_C - M_4 -M_2 \sqrt{M_5 h_C}
= \sqrt{h_C} (M_3 \sqrt{h_C} -M_2 \sqrt{M_5}) -M_4.$$
So, by the Northcott finiteness theorem, there are infinitely many points on $C$ at which 
$\hat h_{D,f}$ has positive value.
Take $x \in C \setminus V$ such that $\hat h_{D,f}(x)>0$.
Then $x \not\in V$ and $\underline h_f(x)>0$.
\end{proof}

Finally, we prove that $\delta_f$ and the pair $(\delta_f,l_f)$ are characterized 
in terms of 
ample canonical heights.

\begin{defn}\label{defn3.9}
Let $X$ be a smooth projective variety, $f$ an endomorphism on $X$, and 
$(\delta, l) \in \mathbb R_{\geq 1} \times \mathbb Z_{\geq 0}$.
We set 
$$\overline h_{f,\delta,l}(x) = \limsup_{n \to \infty} \frac{h_X(f^n(x))}{\delta^n n^l},
\underline h_{f,\delta,l}(x) = \liminf_{n \to \infty} \frac{h_X(f^n(x))}{\delta^n n^l}.$$
\end{defn}

\begin{thm}\label{thm3.10}
Let $X$ be a smooth projective variety and $f$ an endomorphism on $X$.
Then 
$$\delta_f= \min \{ \delta \in \mathbb R_{\geq 1}  \mid
\overline h_{f,\delta+\varepsilon,0}(x) < \infty \ \mathrm{for\ any\ }\varepsilon >0\ 
\mathrm{and\ } x \in X(\overline{\mathbb Q}) \}.$$
\end{thm}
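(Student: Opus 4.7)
The plan is to show that the set
$$S = \{\delta \in \mathbb R_{\geq 1} \mid \overline h_{f,\delta+\varepsilon,0}(x) < \infty \text{ for all } \varepsilon >0 \text{ and all } x \in X(\overline{\mathbb Q})\}$$
is exactly the interval $[\delta_f, \infty)$. Note that $\delta_f \geq 1$ since $f$ is an endomorphism, so the statement ``the minimum is $\delta_f$'' will make sense, and the min is attained because $\delta_f$ will itself lie in $S$.

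First I would prove the inclusion $\delta_f \in S$ as a direct consequence of Theorem \ref{thm_KSM}. Fix $\varepsilon > 0$. By Theorem \ref{thm_KSM} there exists a constant $C = C(\varepsilon) > 0$ such that $h_X(f^n(x)) \leq C(\delta_f + \varepsilon)^n h_X(x)$ for every $n \geq 0$ and every $x \in X(\overline{\mathbb Q})$. Dividing by $(\delta_f+\varepsilon)^n$ and taking the limsup in $n$ gives $\overline h_{f, \delta_f+\varepsilon, 0}(x) \leq C h_X(x) < \infty$. The same argument works for any $\delta \geq \delta_f$, so $[\delta_f, \infty) \subset S$.

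Next I would show that no $\delta < \delta_f$ lies in $S$, which will identify the minimum. For this I invoke Theorem \ref{thm_existence} to select a point $x_0 \in X(\overline{\mathbb Q})$ with $\alpha_f(x_0) = \delta_f$; recall that $\alpha_f(x_0)$ is a genuine limit (not merely a limsup) by the result cited in the Notation section. Given $\delta < \delta_f$, choose $\varepsilon > 0$ with $\delta + \varepsilon < \delta_f$, and set $\eta = \delta_f/(\delta + \varepsilon) > 1$. Since $h_X(f^n(x_0))^{1/n} \to \delta_f$, for all sufficiently large $n$ we have $h_X(f^n(x_0))^{1/n} > (\delta+\varepsilon)\sqrt{\eta}$, hence
$$\frac{h_X(f^n(x_0))}{(\delta + \varepsilon)^n} > \eta^{n/2} \longrightarrow \infty.$$
Therefore $\overline h_{f, \delta+\varepsilon, 0}(x_0) = \infty$, witnessing $\delta \notin S$.

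Combining the two parts yields $S = [\delta_f, \infty)$, so $\min S = \delta_f$, which is the claim. I do not expect any genuine obstacle here: both directions are packaged in results already quoted in the paper (Theorem \ref{thm_KSM} for the upper bound on orbit heights, and Theorem \ref{thm_existence} for the existence of an orbit saturating $\delta_f$). The only small care needed is to use that the arithmetic degree is a true limit, so that $h_X(f^n(x_0))$ really grows like $\delta_f^{n+o(n)}$ rather than merely along a subsequence; this is why the limsup definition of $\overline h_{f,\delta+\varepsilon,0}$ forces the value $\infty$.
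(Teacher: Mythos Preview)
Your proof is correct and follows essentially the same approach as the paper: both use Theorem~\ref{thm_KSM} to show $\delta_f \in S$ and Theorem~\ref{thm_existence} (plus the fact that $\alpha_f$ is a genuine limit) to rule out smaller $\delta$. The only cosmetic difference is that the paper argues the second part in contrapositive form---given $\delta \in S$ it bounds $\delta_f = \alpha_f(x) \leq \delta + \varepsilon$ for every $\varepsilon$---whereas you fix $\delta < \delta_f$, pick one $\varepsilon$ with $\delta+\varepsilon < \delta_f$, and exhibit the divergence directly.
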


\begin{proof}
By  Theorem \ref{thm_KSM}, $\overline h_{f, \delta_f+\varepsilon,0} < \infty$ for 
every $\varepsilon >0$.

Take any $\delta \in \mathbb R_{\geq 1}$ such that 
$\overline h_{f,\delta+\varepsilon,0} < \infty$ for every $\varepsilon >0$.
By Theorem \ref{thm_existence}, there is a point $x \in X(\overline{\mathbb Q})$ 
such that $\alpha_f(x)=\delta_f$.
Take any $\varepsilon >0$.
Then there is a positive constant $C$ such that $h_X(f^n(x)) \leq C (\delta+\varepsilon)^n$ 
for all $n$ since $\overline h_{f,\delta+\varepsilon,0}(x)<\infty$. So 
$$\delta_f =\alpha_f(x)= \lim_{n \to \infty} h_X(f^n(x))^{1/n} 
\leq \lim_{n \to \infty} C^{1/n}(\delta+\varepsilon) =\delta+\varepsilon.$$
Since $\varepsilon$ is arbitrary, $\delta_f \leq \delta$.
\end{proof}

\begin{thm}\label{thm3.12}
Let $X$ be a smooth projective variety and $f$ an endomorphism on $X$.
Then
$$(\delta_f,l_f)= \min \{ (\delta,l) \in \mathbb R_{\geq 1} \times \mathbb Z_{\geq 0} \mid
\overline h_{f,\delta,l}(x) < \infty \ \mathrm{for\ any\ }
x \in X(\overline{\mathbb Q}) \}.$$
\end{thm}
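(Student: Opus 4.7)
The plan is to verify that $(\delta_f, l_f)$ lies in the set on the right-hand side, and then to show that no pair strictly smaller in the lexicographic order can lie there. The first step is essentially built into Definition \ref{defn3.1}: by the very definition of $l_f$, the sequence $\{\delta_f^{-n} n^{-l_f} h_X(f^n(x))\}_n$ is upper bounded for every $x \in X(\overline{\mathbb Q})$, hence $\overline h_{f, \delta_f, l_f}(x) < \infty$ for every $x$, placing $(\delta_f, l_f)$ in the set.

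For the minimality I would split according to the two ways a pair $(\delta, l)$ can be strictly smaller than $(\delta_f, l_f)$ in the order of Definition \ref{defn3.6}. First, suppose $\delta < \delta_f$. Here I would invoke Theorem \ref{thm_existence} to pick a point $x_0 \in X(\overline{\mathbb Q})$ with $\alpha_f(x_0) = \delta_f$ and choose any auxiliary $\delta' \in (\delta, \delta_f)$. The definition of $\alpha_f$ gives $h_X(f^n(x_0))^{1/n} \to \delta_f > \delta'$, so $h_X(f^n(x_0)) \geq (\delta')^n$ for all sufficiently large $n$. Consequently
$$\frac{h_X(f^n(x_0))}{\delta^n n^l} \;\geq\; \frac{(\delta'/\delta)^n}{n^l} \;\longrightarrow\; +\infty$$
for every $l \geq 0$, witnessing $\overline h_{f, \delta, l}(x_0) = \infty$ and excluding $(\delta, l)$ from the set.

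Second, suppose $\delta = \delta_f$ and $l < l_f$. Here the minimality clause in Definition \ref{defn3.1}(i) directly supplies a point $x_0$ for which $\{\delta_f^{-n} n^{-l} h_X(f^n(x_0))\}_n$ fails to be upper bounded, and hence $\overline h_{f, \delta, l}(x_0) = +\infty$. Combining the two cases shows that no element of the right-hand set is strictly below $(\delta_f, l_f)$, yielding the asserted equality.

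I do not foresee a serious obstacle: the entire argument is a lexicographic dissection of the two defining conditions, and the only non-routine input is the existence of a point attaining the dynamical degree as its arithmetic degree, namely Theorem \ref{thm_existence}. This plays exactly the same role as in the proof of Theorem \ref{thm3.10} and provides the ``witness'' ruling out $\delta < \delta_f$, while the $\delta = \delta_f$ case is settled tautologically by the definition of $l_f$.
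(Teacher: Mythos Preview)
Your proof is correct and follows essentially the same approach as the paper: both verify that $(\delta_f,l_f)$ lies in the set by definition, then use Theorem~\ref{thm_existence} to rule out $\delta<\delta_f$ and the minimality in the definition of $l_f$ to rule out $(\delta_f,l)$ with $l<l_f$. The only cosmetic difference is that the paper packages the first exclusion by citing Theorem~\ref{thm3.10} (noting that $\overline h_{f,\delta,l}<\infty$ implies $\overline h_{f,\delta+\varepsilon,0}<\infty$ for all $\varepsilon>0$), whereas you inline that argument directly.
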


\begin{proof}
By definition, $\overline h_{f,\delta_f,l_f}= \overline h_f < \infty$.

Take any $(\delta,l) \in \mathbb R_{\geq 1} \times \mathbb Z_{\geq 0}$ such that 
$\overline h_{f,\delta,l} < \infty$.
Then $\overline h_{f, \delta+\varepsilon,0} < \infty$ for any $\varepsilon >0$.
So $\delta_f \leq \delta$ by Theorem \ref{thm3.10}.
If $\delta_f < \delta$, then $(\delta_f,l_f) < (\delta,l)$.
If $\delta_f=\delta$, then $l_f \leq l$ by definition of $l_f$.
Eventually, $(\delta_f,l_f) \leq (\delta,l)$.
\end{proof}


\section{Varieties with small Picard numbers}\label{sec4}

This section treats ample canonical heights for endomorphisms 
on smooth projective varieties with small Picard numbers.

If $X$ is a smooth projective variety with $\rho(X)=1$ and $f$ is an endomorphism with 
$\delta_f>1$, we can take an ample divisor $H$ such that $f^*H \equiv \delta_f H$.
So the following includes the case when $\rho(X)=1$.

\begin{thm}\label{thm4.1}
Let $X$ be a smooth projective variety and $f$ an endomorphism on $X$ with 
$\delta_f >1$.
Assume that there is an ample $\mathbb R$-divisor $H$ on $X$ such that 
$f^*H \equiv \delta_f H$.
\begin{itemize}
\item[(i)]
We have $l_f=0$ and $\overline h_f \asymp \underline h_f \asymp \hat h_{H,f}$.
\item[(ii)]
We have $Z_f(\overline{\mathbb Q})=\Preper_f(\overline{\mathbb Q})$,
and $\Preper_f(K)$ is finite for any number field $K$.
\end{itemize}
\end{thm}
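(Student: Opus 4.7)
The plan is to reduce everything to the nef canonical height $\hat h_{H,f}$ given by Theorem \ref{thm_canht}. Since $f^*H \equiv \delta_f H$ and $\delta_f>1$ is equivalent to $\delta_f > \sqrt{\delta_f}$, Theorem \ref{thm_canht}(ii) applies directly: the limit $\hat h_{H,f}(x)=\lim_n \delta_f^{-n} h_H(f^n(x))$ exists for every $x\in X(\overline{\mathbb Q})$ and satisfies the quasi-equality $\hat h_{H,f} = h_H + O(\sqrt{h_X})$. Because $H$ is ample we also have $h_H \asymp h_X$ (both being ample heights bounded below by a positive constant, cf.\ the fixed convention on $h_X$ and Lemma \ref{lem2.1}).

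For (i), first I would show $l_f=0$. Since $h_X \asymp h_H$, we have
\[
\frac{h_X(f^n(x))}{\delta_f^n} \asymp \frac{h_H(f^n(x))}{\delta_f^n} \longrightarrow \hat h_{H,f}(x) < \infty,
\]
so the sequence $\{\delta_f^{-n} h_X(f^n(x))\}_n$ is upper bounded for every $x$, forcing $l_f=0$. Passing to $\limsup$ and $\liminf$ in the same chain of inequalities, and using $c\,h_H \le h_X \le C\,h_H$ for suitable constants $c,C>0$, yields
\[
c\,\hat h_{H,f}(x) \le \underline h_f(x) \le \overline h_f(x) \le C\,\hat h_{H,f}(x),
\]
which is precisely $\overline h_f \asymp \underline h_f \asymp \hat h_{H,f}$.

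For (ii), the inclusion $\Preper_f(\overline{\mathbb Q}) \subset Z_f(\overline{\mathbb Q})$ follows from Proposition \ref{prop3.2}(ii) since $\delta_f>1$. For the reverse inclusion, suppose $x \in Z_f(\overline{\mathbb Q})$. Then $\underline h_f(x) = 0$, hence by (i) also $\hat h_{H,f}(x)=0$. Combining the functional equation $\hat h_{H,f}\circ f = \delta_f \hat h_{H,f}$ with $\hat h_{H,f}=h_H + O(\sqrt{h_X})$ and $h_H \asymp h_X$, I would deduce that for every $n\ge 0$,
\[
0 = \hat h_{H,f}(f^n(x)) \ge c\, h_X(f^n(x)) - C'\sqrt{h_X(f^n(x))},
\]
so $h_X(f^n(x))$ is uniformly bounded in $n$. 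Since all iterates $f^n(x)$ lie in $X(K)$ for a fixed number field $K$ containing $x$ (and a field of definition of $X,f$), the Northcott finiteness theorem \ref{thm_Northcott} forces the orbit $O_f(x)$ to be finite, i.e.\ $x$ is preperiodic.

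For the final finiteness statement, I would apply the same bound to $x$ itself: if $x \in \Preper_f(K)$ then $\hat h_{H,f}(x)=0$ by the functional equation and finiteness of the orbit, so the inequality $c\,h_X(x) \le C'\sqrt{h_X(x)}$ gives $h_X(x) \le (C'/c)^2$, and Theorem \ref{thm_Northcott} again yields finiteness of $\Preper_f(K)$. The only subtle point in the whole argument is the quadratic-type inequality comparing $h_X(f^n(x))$ and $\sqrt{h_X(f^n(x))}$, which is the standard trick for deducing bounded height from vanishing of the canonical height under a merely numerical (rather than linear) eigen-relation $f^*H \equiv \delta_f H$.
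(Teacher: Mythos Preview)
Your proof is correct and follows essentially the same approach as the paper: both use $h_X \asymp h_H$ together with Theorem~\ref{thm_canht}(ii) to get $l_f=0$ and $\overline h_f \asymp \underline h_f \asymp \hat h_{H,f}$, and both deduce $Z_f(\overline{\mathbb Q})=\Preper_f(\overline{\mathbb Q})$ and finiteness of $\Preper_f(K)$ from the quadratic-type inequality $\hat h_{H,f} \ge \sqrt{h_H}(\sqrt{h_H}-C)$ combined with the functional equation and Northcott. The only cosmetic difference is that the paper argues by contrapositive (if $x$ is not preperiodic then some iterate has $\hat h_{H,f}>0$, hence $\hat h_{H,f}(x)>0$), whereas you argue directly (if $\hat h_{H,f}(x)=0$ then all iterates have bounded height, hence the orbit is finite); these are the same idea.
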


\begin{proof}
(i) Take $h_H$ as satisfying $h_H \geq 1$.
Then $h_X \asymp h_H$, so $l_f=0$ and 
$\overline h_f, \underline h_f \asymp \hat h_{H,f}$.

(ii) 
By Proposition \ref{prop3.2} (ii), $Z_f(\overline{\mathbb Q})$ contains all $f$-preperiodic 
points.
Conversely, take any non-$f$-preperiodic point $x$.
Since $\hat h_{H,f} = h_H + O(\sqrt{h_H})$, we can take $C>0$ such that 
$$\hat h_{H,f} \geq h_H - C\sqrt{h_H} = \sqrt{h_H}(\sqrt{h_H} - C).$$
Therefore $\hat h_{H,f}(f^k(x)) >0$ for some $k$ by the Northcott finiteness theorem.
Then $\hat h_{H,f}(x)>0$ since $\hat h_{H,f}(f^k(x))=\delta_f^k \hat h_{H,f}(x)$.
So $\underline h_f(x)>0$ 
because $\underline h_f \asymp \hat h_{H,f}$.
Thus $Z_f(\overline{\mathbb Q})=\Preper_f(\overline{\mathbb Q})$.

Let $K$ be any number field.
The inequality $\hat h_{H,f} \geq \sqrt{h_H}(\sqrt{h_H} - C)$ 
implies that $\sqrt{h_H(x)}(\sqrt{h_H(x)} - C) \leq 0$ for any 
$x \in Z_f(\overline{\mathbb Q})$.
So the Northcott finiteness theorem implies that 
$\Preper_f(K)=Z_f(K)$  is finite.
\end{proof}

Next, we consider the case when $\rho(X) \leq 2$.

\begin{thm}\label{thm4.2}
Let $X$ be a smooth projective variety with $\rho(X) \leq 2$ and $f$ an endomorphism 
on $X$ with $\delta_f >1$.
\begin{itemize}
\item[(i)]
We have $l_f=0$. 
\item[(ii)]
Assume that $f$ is an automorphism.
Then there is a nef canonical height $\hat h_{D,f}$ such that 
$\overline h_f \asymp \underline h_f \asymp \hat h_{D,f}$.
Moreover, 
we have $Z_f(\overline{\mathbb Q})=\Per_f(\overline{\mathbb Q})$, 
and $\Per_f(K)$ is finite for any number field $K$.
\end{itemize}
\end{thm}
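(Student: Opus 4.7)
The case $\rho(X)=1$ is immediate from Theorem \ref{thm4.1}, since any ample class in a one-dimensional $N^1(X)_{\mathbb R}$ satisfies $f^*H\equiv\delta_f H$, so the real work lies in $\rho(X)=2$. My plan is to organize the proof around part (ii), the automorphism case, which will simultaneously settle the automorphism subcase of part (i). The key structural input is that, for an automorphism $f$, the action $f^*$ on $N^1(X)/\mathrm{tors}\cong\mathbb Z^2$ lies in $GL_2(\mathbb Z)$, so $\det(f^*)=\pm 1$; consequently, the second eigenvalue of $f^*$ has absolute value exactly $1/\delta_f$.

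First I would apply Theorem \ref{thm_pfb} to $f$ to obtain a nef $\mathbb R$-divisor $D_+$ with $f^*D_+\equiv\delta_f D_+$. Replacing $f$ by $f^2$ via Lemma \ref{lem3.5}, I may assume both eigenvalues $\delta_f^2$ and $1/\delta_f^2$ of $f^*$ are positive and distinct; applying Theorem \ref{thm_pfb} to $f^{-1}$ then yields a second nef eigenvector $D_-$, and $f^*$ is diagonalizable. Linear independence of $D_+,D_-$ places them on the two extremal rays of the nef cone, so $D_++D_-$ is ample and $h_X\asymp h_{D_+}+h_{D_-}+O(1)$. Next, by Theorem \ref{thm_canht}(ii), the nef canonical heights $\hat h_{D_+,f}$ and $\hat h_{D_-,f^{-1}}$ both exist, satisfying $\hat h_{D_+,f}=h_{D_+}+O(\sqrt{h_X})$, $\hat h_{D_-,f^{-1}}=h_{D_-}+O(\sqrt{h_X})$, together with the functional equations $\hat h_{D_+,f}\circ f^n=\delta_f^n\hat h_{D_+,f}$ and $\hat h_{D_-,f^{-1}}\circ f^n=\delta_f^{-n}\hat h_{D_-,f^{-1}}$. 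Substituting into the previous comparison gives the master formula
\[
h_X(f^n x)=\delta_f^n\,\hat h_{D_+,f}(x)+\delta_f^{-n}\,\hat h_{D_-,f^{-1}}(x)+O\!\bigl(\sqrt{h_X(f^n x)}\bigr).
\]

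Dividing by $\delta_f^n$ and absorbing the error term via Theorem \ref{thm_Mat} yields $l_f=0$ and $\overline h_f\asymp\underline h_f\asymp\hat h_{D_+,f}$, proving (i) in the automorphism case and the height comparison in (ii). For the identification $Z_f(\overline{\mathbb Q})=\Per_f(\overline{\mathbb Q})$, $f$-periodic points have vanishing ample canonical height by the functional equation, while if $\hat h_{D_+,f}(x)=0$ then the master formula makes $h_X(f^n x)$ eventually bounded, so $O_f(x)$ is finite by Theorem \ref{thm_Northcott} and $x$ is periodic since $f$ is an automorphism. Finiteness of $\Per_f(K)$ then follows: on a periodic $x$ the two functional equations force $\hat h_{D_+,f}(x)=\hat h_{D_-,f^{-1}}(x)=0$, hence $h_{D_++D_-}(x)=O(\sqrt{h_X(x)})$, and ampleness of $D_++D_-$ combined with Theorem \ref{thm_Northcott} delivers the finiteness.

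The main technical obstacle is obtaining the $O(\sqrt{h_X})$ error in the master formula uniformly in $n$: this cannot be extracted from numerical equivalence of divisors alone, but is supplied by the quantitative asymptotics $\hat h_{D,f}=h_D+O(\sqrt{h_X})$ of Theorem \ref{thm_canht}(ii). For the remaining non-automorphism subcase of part (i) with $\rho(X)=2$, the Perron--Frobenius eigenvector $D_+$ is still at hand, but one must additionally rule out a Jordan block of $f^*$ at the eigenvalue $\delta_f$—which would contribute an unwanted factor of $n$ and force $l_f\ge 1$—via a Birkhoff-type simplicity argument applied to the action of an iterate of $f^*$ on the nef cone, or by showing that the non-diagonalizable scenario reduces after passing to a power to the setting of Theorem \ref{thm4.1}.
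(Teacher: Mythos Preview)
Your treatment of the automorphism case is correct and follows the same route as the paper: build two nef canonical heights $\hat h_{D_+,f}$ and $\hat h_{D_-,f^{-1}}$, write $h_X$ (with the choice $h_X=h_{D_++D_-}$) as their sum plus an $O(\sqrt{h_X})$ error, and let the error die after dividing by $\delta_f^n$. Two cosmetic differences: the paper gets $D_-$ directly as the \emph{other} extremal ray of the nef cone rather than by applying Theorem~\ref{thm_pfb} to $f^{-1}$, and the paper does not pass to $f^2$---the nef-cone argument shows that the second eigenvalue is already positive (hence equal to $1/\delta_f$), so the squaring step is unnecessary, and Lemma~\ref{lem3.5} as stated does not literally transfer the conclusions $\overline h_f\asymp\underline h_f\asymp\hat h_{D,f}$ back from $f^2$ to $f$ without a short extra argument.

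The genuine gap is in the non-automorphism subcase of (i). Your worry about a Jordan block at $\delta_f$ is misplaced, and the fix is much simpler than a Birkhoff simplicity argument or a reduction to Theorem~\ref{thm4.1} after passing to a power (the latter cannot work: a Jordan block persists under iteration). The point is that for \emph{any} surjective endomorphism $f$, the map $f^*$ is a linear automorphism of $N^1(X)_{\mathbb R}$ with inverse $(\deg f)^{-1}f_*$; since $f^*$ and $f_*$ both send nef classes to nef classes, $f^*$ restricts to a \emph{bijection} of the closed nef cone and hence permutes its two boundary rays. One of those rays carries the Perron--Frobenius eigenvector $D$ (unless $D$ is ample, in which case Theorem~\ref{thm4.1} finishes immediately), so $f^*$ fixes that ray and therefore fixes the other ray as well. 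Thus the second boundary ray is spanned by a nef class $D'$ with $f^*D'\equiv\lambda D'$ for some $0<\lambda\le\delta_f$, and $f^*$ is automatically diagonalizable---no Jordan block is possible. With this in hand, the paper writes $h_A=h_D+h_{D'}$ for the ample class $A=D+D'$ and observes that $h_{D'}(f^n(x))/\delta_f^n$ converges (to $0$ by Lemma~\ref{lem4.3}(ii) if $\lambda<\delta_f$, or to $\hat h_{D',f}(x)$ if $\lambda=\delta_f$); together with Lemma~\ref{lem4.3}(i) for the $O(\sqrt{h_A})$ term this gives $l_f=0$.
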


To prove Theorem \ref{thm4.2}, we use the following lemma.

\begin{lem}\label{lem4.3}
Let $X$ be a smooth projective variety and $f$ an endomorphism on $X$ with $\delta_f>1$.
\begin{itemize}
\item[(i)]
We have 
$$\lim_{n \to \infty} \frac{\sqrt{h_X(f^n(x))}}{\delta_f^n}=0$$
for every $x \in X(\overline{\mathbb Q})$.
\item[(ii)]
Let $D$ be an $\mathbb R$-divisor on $X$ such that $f^*D \equiv \lambda D$ with 
$0<\lambda < \delta_f$.
Then 
$$\lim_{n \to \infty} \frac{h_D(f^n(x))}{\delta_f^n}=0$$
for every $x \in X(\overline{\mathbb Q})$.
\end{itemize}
\end{lem}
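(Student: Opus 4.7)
My plan is as follows. For part (i), I would apply Theorem~\ref{thm_Mat}(ii) directly: since $\delta_f>1$, that theorem gives a constant $C>0$ with $h_X(f^n(x)) \leq C \delta_f^n n^r h_X(x)$ for all $n\geq 0$, where $r=\rho(X)$. Taking square roots and dividing by $\delta_f^n$ yields
$$
\frac{\sqrt{h_X(f^n(x))}}{\delta_f^n} \leq \sqrt{C\, h_X(x)}\cdot \frac{n^{r/2}}{\delta_f^{n/2}},
$$
and this tends to $0$ because the exponential factor $\delta_f^{-n/2}$ dominates the polynomial $n^{r/2}$.

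For part (ii), the starting point is the height-machine identity $h_D\circ f = \lambda\, h_D + O(\sqrt{h_X})$, obtained by combining the functoriality relation $h_{f^*D}=h_D\circ f + O(1)$ with the standard fact that the numerically trivial $\mathbb R$-divisor $f^*D-\lambda D$ has height $O(\sqrt{h_X})$. Evaluating at $f^{n-1}(x)$ gives a constant $M>0$ with
$$
\bigl|\,h_D(f^n(x)) - \lambda\, h_D(f^{n-1}(x))\,\bigr| \leq M\sqrt{h_X(f^{n-1}(x))}\quad(n\geq 1).
$$
Now set $a_n=h_D(f^n(x))/\delta_f^n$ and $\mu=\lambda/\delta_f\in(0,1)$. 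Dividing the above inequality by $\delta_f^n$ and applying part~(i) shows that $a_n-\mu a_{n-1}\to 0$. The conclusion $a_n\to 0$ then follows from the standard fact that a recursion $a_n=\mu a_{n-1}+b_n$ with $|\mu|<1$ and $b_n\to 0$ forces $a_n\to 0$: expanding $a_n=\mu^n a_0+\sum_{k=1}^n \mu^{n-k}b_k$ and splitting the convolution at any threshold $N$ with $|b_k|<\varepsilon$ for $k>N$ bounds the tail by $\varepsilon/(1-\mu)$ while the head, of fixed length, vanishes as $n\to\infty$.

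The main point requiring care is the error term $O(\sqrt{h_X})$ in the height-machine identity, which is essential because $f^*D\equiv\lambda D$ is merely numerical equivalence; this is precisely why part~(i) must be invoked to drive the recursion. In contrast with Theorem~\ref{thm_canht}(ii), which needs the spectral gap $\lambda>\sqrt{\delta_f}$ in order to obtain convergence of the ratio $h_D(f^n(x))/\lambda^n$ itself, here the weaker normalization by $\delta_f^n$ absorbs the error for every $\lambda<\delta_f$ without any such gap hypothesis.
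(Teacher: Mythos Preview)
Your proof is correct and follows essentially the same approach as the paper: for (i) you invoke the growth bound from Theorem~\ref{thm_Mat} (the paper uses the equivalent Theorem~\ref{thm_KSM} with $\delta_f+\varepsilon<\delta_f^2$), and for (ii) both arguments start from $h_D\circ f-\lambda h_D=O(\sqrt{h_X})$ and telescope to a convolution $\sum_{k}\mu^{n-k}b_k$; the paper bounds this sum explicitly by $n\mu^{n-1}$ with $\mu=\max\{\lambda,\sqrt{\delta_f+\varepsilon}\}<\delta_f$, while you use the equivalent $\varepsilon$-splitting argument. No substantive difference.
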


\begin{proof}
(i) Take $\varepsilon >0$ such that $\delta_f+\varepsilon < \delta_f^2$.
By Theorem \ref{thm_KSM}, 
there is a positive constant $C$ 
such that 
$h_X \circ f^n \leq C (\delta_f+\varepsilon)^n h_X$ for all $n$.
Then 
$$\frac{\sqrt{h_X \circ f^n}}{\delta_f^n} 
\leq \frac{\sqrt{C (\delta_f+\varepsilon)^n h_X}}{\delta_f^n}
= \sqrt{C} \left( \frac{\delta_f+\varepsilon}{\delta_f^2} \right)^{n/2} \sqrt{h_X}.$$
So the assertion follows.

(ii) Set $\phi=h_D \circ f - \lambda h_D$. Since $\phi=O(\sqrt{h_X})$,
there is a positive constant $C'$ such that $\phi \leq C' \sqrt{h_X}$.
Then 
\begin{align*}
h_D \circ f^n
&= \sum_{k=1}^n \lambda^{n-k} (h_D \circ f^k - \lambda h_D \circ f^{k-1}) + \lambda^n h_D \\
&= \sum_{k=1}^n \lambda^{n-k} \phi \circ f^{k-1} + \lambda^n h_D.
\end{align*}
So 
\begin{align*}
| h_D \circ f^n | 
&\leq \sum_{k=1}^n \lambda^{n-k} |\phi \circ f^{k-1}| + \lambda^n |h_D| \\
&\leq \sum_{k=1}^n \lambda^{n-k} C' \sqrt{h_X \circ f^{k-1}} + \lambda^n |h_D| \\
&\leq \sum_{k=1}^n \lambda^{n-k} C' 
\sqrt{C(\delta_f+\varepsilon)^{k-1}h_X} + \lambda^n |h_D| \\
&\leq C' \sqrt{C}  
\sum_{k=1}^n \lambda^{n-k}  (\delta_f+\varepsilon)^{(k-1)/2} \sqrt{h_X} + \lambda^n |h_D| \\
&\leq C' \sqrt{C} n \mu^{n-1}  \sqrt{h_X} + \lambda^n |h_D|,
\end{align*}
where $\mu=\max \{ \lambda, \sqrt{\delta_f+\varepsilon} \} <\delta_f$.
So $\lim_n h_D(f^n(x))/\delta_f^n =0$ for every $x$.
\end{proof}

\begin{proof}[Proof of Theorem \ref{thm4.2}]
If $\rho(X)=1$, the assertion follows from Theorem \ref{thm4.1}.
So we may assume $\rho(X)=2$.
By the Perron--Frobenius--Birkhoff theorem (Theorem \ref{thm_pfb}),
we have a nef $\mathbb R$-divisor $D \not\equiv 0$ such that $f^*D \equiv \delta_f D$.
If $D$ is ample, then the proof is reduced to Theorem \ref{thm4.1}.
So we may assume that $D$ is not ample.
Then the numerical class of $D$ is on one of two edges of the nef cone of $X$.
Take a nef $\mathbb R$-divisor $D' \not\equiv 0$ whose numerical class is on the other 
edge of the nef cone.
Then $f^*D' \equiv \lambda D'$ for some $0< \lambda \leq \delta_f$ 
since $f^*$ is an automorphism which preserves the boundary of the nef cone.
Since $A=D+D'$ is ample, taking $h_{D}, h_{D'}, h_A$ as satisfying 
$h_A=h_{D}+h_{D'} \geq 1$,
we have $h_X \asymp h_A$.

(i) Since $h_A = \hat h_{D,f}+ h_{D'}+O(\sqrt{h_A})$,
we can take $C>0$ such that $h_A \leq \hat h_{D, f}+ h_{D'} + C\sqrt{h_A}$.
Then 
$$\frac{h_A \circ f^n}{\delta_f^n} \leq \hat h_{D, f} + \frac{h_{D'} \circ f^n}{\delta_f^n}
+ C\frac{\sqrt{h_A \circ f^n}}{\delta_f^n}.$$
Take any point $x \in X(\overline{\mathbb Q})$.
If $\lambda < \delta_f$, then $\left\{ \frac{h_{D'}(f^n(x))}{\delta_f^n} \right\}_n$ converges 
to 0 by Lemma \ref{lem4.3} (ii).
If $\lambda = \delta_f$, then $\left\{ \frac{h_{D'}(f^n(x))}{\delta_f^n} \right\}_n$ converges 
to $\hat h_{D',f}(x)$.
So $\left\{ \frac{h_{D'}(f^n(x))}{\delta_f^n} \right\}_n$ converges in any case.
Furthermore, 
$\left\{ \frac{\sqrt{h_A(f^n(x))}}{\delta_f^n} \right\}_n$ converges to 0 
by Lemma \ref{lem4.3} (i).
Therefore $\left\{ \frac{h_A(f^n(x))}{\delta_f^n} \right\}_n$ is upper bounded.
So it follows that $l_f=0$.

(ii) Since $f$ is an automorphism, the inverse of $f^*: N^1(X) \to N^1(X)$ is  
$(f^{-1})^*: N^1(X) \to N^1(X)$. 
So $\delta_f \lambda = |\det(f^*)|=1$ and therefore $\lambda= \delta_f^{-1}$.
Now we have $h_A = \hat h_{D,f}+ \hat h_{D',f^{-1}}+O(\sqrt{h_A})$.
Set $\phi= h_A - \hat h_{D,f}- \hat h_{D',f^{-1}}$.
Then 
$$ \frac{h_A \circ f^n}{\delta_f^n} = \hat h_{D,f}+ \frac{\hat h_{D',f^{-1}}}{\delta_f^{2n}}
+\frac{\phi \circ f^n}{\delta_f^n}.$$
So $\lim_n \delta_f^{-n} h_A \circ f^n = \hat h_{D,f}$ 
by Lemma \ref{lem4.3}.
Since $h_A \asymp h_X$, we have $\overline h_f, \underline h_f \asymp \hat h_{D,f}$.

We can take $C'>0$ such that 
$\hat h_{D,f}+ \hat h_{D',f^{-1}} \geq h_A - C'\sqrt{h_A} = \sqrt{h_A}(\sqrt{h_A}-C')$.
Take any non-$f$-preperiodic point $x$.
Then 
$$\{ \hat h_{D,f}(f^n(x))+ \hat h_{D',f^{-1}}(f^n(x)) \}_{n=0}^\infty
= \{ \delta_f^n \hat h_{D,f}(x)+ \delta_f^{-n} \hat h_{D',f^{-1}}(x) \}_{n=0}^\infty$$ 
is not upper bounded 
by the Northcott finiteness theorem.
So $\hat h_{D,f}(x)$ must be positive.
Since $\underline h_f \asymp \hat h_{D,f}$, we obtain $\underline h_f(x)>0$.
Therefore $Z_f(\overline{\mathbb Q})=\Preper_f(\overline{\mathbb Q})
=\Per_f(\overline{\mathbb Q})$.

By the same argument for $f^{-1}$,
we obtain $Z_{f^{-1}}(\overline{\mathbb Q})=\Per_{f^{-1}}(\overline{\mathbb Q})$.
Clearly $\Per_{f}(\overline{\mathbb Q})=\Per_{f^{-1}}(\overline{\mathbb Q})$,
so we have $Z_{f^{-1}}(\overline{\mathbb Q})=Z_{f}(\overline{\mathbb Q})$.

Take any $x \in Z_f(\overline{\mathbb Q})$.
Then $\hat h_{D,f}(x)=0$ since $\hat h_{D,f} \asymp \underline h_f$.
Moreover, since $x \in Z_{f^{-1}}(\overline{\mathbb Q})$ 
and $\hat h_{D',f^{-1}} \asymp \underline h_{f^{-1}}$,
we have $\hat h_{D',f^{-1}}(x)=0$.
Then the inequality 
$\hat h_{D,f}+ \hat h_{D',f^{-1}} \geq \sqrt{h_A}(\sqrt{h_A}-C')$ 
implies that $\sqrt{h_A(x)}(\sqrt{h_A(x)}-C') \leq 0$ 
for $x \in Z_f(\overline{\mathbb Q})=\Per_f(\overline{\mathbb Q})$.
So the Northcott finiteness theorem deduces that 
$Z_f(K)=\Per_f(K)$ is finite 
for any number field $K$.
\end{proof}

At last, we consider a Calabi--Yau threefold with Picard number $\leq 3$ 
and an automorphism on it.
The arguments here is based on \cite{LOP17} and \cite{LOP16}.
To obtain a result, we need the following conjecture.

\begin{conj}[The abundance conjecture for Ricci flat manifolds, 
{\cite[Conjecture 4.8]{LOP16}}]\label{conj4.4}
Let $X$ be a smooth projective variety with $K_X \sim 0$ and $H^1(X,\mathcal O_X)=0$.
Then any nef Cartier divisor on $X$ is semiample.
\end{conj}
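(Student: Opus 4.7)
The plan is to split by the numerical dimension $\nu(D)$ of the nef Cartier divisor $D$, since the two extreme values are within reach of standard tools while the intermediate range is the genuine obstacle. If $\nu(D) = \dim X$, then $D$ is nef and big, and because $K_X \sim 0$ the divisor $D - K_X = D$ is also nef and big, so Kawamata's base point free theorem gives that $D$ is semiample. If $\nu(D) = 0$, i.e.\ $D \equiv 0$, the hypothesis $H^1(X, \mathcal O_X) = 0$ forces $\Pic^0(X)$ to be trivial, so numerical triviality of $D$ implies $D \sim_{\mathbb Q} 0$, which is trivially semiample.

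The core issue is the intermediate range $0 < \nu(D) < \dim X$. The natural strategy is to produce a morphism $\pi: X \to Y$ to a lower-dimensional normal projective variety together with an ample $\mathbb Q$-divisor $A$ on $Y$ such that $D \sim_{\mathbb Q} \pi^* A$; the Iitaka fibration attached to some multiple of $D$ is the canonical candidate, but to set it up one first needs nonvanishing ($\kappa(X, D) \geq 0$), and then one must upgrade this to the equality $\kappa(X, D) = \nu(X, D)$ which is the full abundance statement. On a Calabi--Yau manifold, the constraint $K_X \sim 0$ together with the Beauville--Bogomolov--Yau decomposition (after passing to a finite \'etale cover) is a natural source of extra rigidity one would try to exploit; in the threefold case one could additionally analyse possible Iitaka fibrations through the canonical bundle formula on the base, whose general fibres are forced to be Calabi--Yau of strictly smaller dimension, opening the door to an induction on $\dim X$.

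The hard part, and the reason the statement appears here as a conjecture rather than a theorem, is precisely the nonvanishing step: asserting that a nontrivial nef Cartier divisor on a simply connected Calabi--Yau manifold admits some effective multiple is a well-known open problem already in dimension three. So an honest plan here is essentially a reduction: prove nonvanishing for nef divisors on Calabi--Yau manifolds, and then combine the base point free theorem with the Iitaka fibration to conclude. In dimension two the hypotheses force $X$ to be a K3 surface, where abundance for nef divisors is classical, so the conjecture is known; in dimension three it is established in several special cases (for example when $D$ is strictly nef, or under suitable deformation arguments), but open in full generality.
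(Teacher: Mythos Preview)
The statement you are addressing is a \emph{conjecture}, not a theorem: the paper does not prove it and does not attempt to. It is quoted from \cite{LOP16} and used only as a hypothesis in Theorem~\ref{thm4.5}(ii). So there is no ``paper's own proof'' to compare your proposal against.

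Your write-up is not a proof either, and you are aware of this: you correctly isolate the nonvanishing step in the intermediate range $0<\nu(D)<\dim X$ as the genuine open problem. The extremal cases you handle ($\nu(D)=\dim X$ via the base point free theorem, $\nu(D)=0$ via triviality of $\Pic^0$) are fine and standard, but they do not touch the content of the conjecture. What you have produced is a reasonable survey of why the conjecture is hard, not a proof proposal in any meaningful sense. If the assignment was to attempt a proof, the honest answer is that none is available; if it was to explain the obstruction, your discussion is accurate, though the appeal to Beauville--Bogomolov decomposition is somewhat beside the point here since the hypothesis $H^1(X,\mathcal O_X)=0$ together with $K_X\sim 0$ already pins down the relevant building blocks, and the difficulty persists on each irreducible Calabi--Yau or hyperk\"ahler factor.
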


For an automorphism on a Calabi--Yau threefold with Picard number $\leq 3$,
a precise description of $Z_f$ is not obtained at the moment,
but we can prove Conjecture \ref{conj_main} if we assume Conjecture \ref{conj4.4}.

\begin{thm}\label{thm4.5}
Let $X$ be a Calabi--Yau threefold (i.e.~a projective threefold with 
$K_X \sim 0$ and 
$\pi_1(X)=0$) with $\rho(X) \leq 3$ and $f$ an automorphism on $X$ 
with $\delta_f>1$.
\begin{itemize}
\item[(i)] 
We have $l_f=0$, and 
there is a nef canonical height $\hat h_{D^+,f}$ such that 
$\overline h_f \asymp \underline h_f \asymp \hat h_{D^+,f}$.
\item[(ii)]
Assume Conjecture \ref{conj4.4}.
Then Conjecture \ref{conj_main} holds for $f$.
\end{itemize}
\end{thm}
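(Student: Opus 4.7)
The plan: since the case $\rho(X) \leq 2$ is covered by Theorem \ref{thm4.2}, focus on $\rho(X) = 3$. Because $f$ is an automorphism, $f^*$ acts on $N^1(X)$ as a lattice automorphism, so $\det(f^*) = \pm 1$ and its three eigenvalues $\delta_f \geq |\lambda_2| \geq |\lambda_3|$ satisfy $|\lambda_2 \lambda_3| = \delta_f^{-1}$. This forces $|\lambda_3| \leq \delta_f^{-1/2}$ and hence $\delta_{f^{-1}} = 1/|\lambda_3| \geq \sqrt{\delta_f} > 1$, so Perron--Frobenius (Theorem \ref{thm_pfb}) applied to $f$ and to $f^{-1}$ yields nef $\mathbb R$-divisors $D^+, D^-$ with $f^* D^+ \equiv \delta_f D^+$ and $f^* D^- \equiv \delta_{f^{-1}}^{-1} D^-$, for which Theorem \ref{thm_canht}(ii) provides the canonical heights $\hat h_{D^+, f}$ and $\hat h_{D^-, f^{-1}}$.

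For part (i), assume further that $\delta_f$ is a simple eigenvalue of $f^*$, which is known for Calabi--Yau threefolds of Picard number three with positive entropy automorphisms (cf.~\cite{LOP16}), so that $|\lambda_2| < \delta_f$. Decompose $N^1(X)_{\mathbb R} = \mathbb R D^+ \oplus W$, where $W$ is the sum of the generalized eigenspaces for the other two eigenvalues; it is $f^*$-invariant with spectral radius $|\lambda_2| = \delta_{f^{-1}}/\delta_f < \delta_f$. For a chosen ample divisor $A$, write $A \equiv a^+ D^+ + R$ with $R \in W$; the coefficient $a^+$ is non-negative because $(f^n)^* A / \delta_f^n$ is a scaled ample class converging to $a^+ D^+$, whose limit is nef, and $a^+ > 0$ because the linear functional $A \mapsto a^+$ is non-negative on the nef cone and thus strictly positive on its interior. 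Lemma \ref{lem4.3}(ii) extends to $W$ by diagonalizing $f^*|_W$ over $\mathbb C$ and applying the telescoping identity $h_D \circ f = \lambda h_D + O(\sqrt{h_X})$ to each real or imaginary component; this yields $h_R(f^n(x))/\delta_f^n \to 0$. Combined with $h_A = a^+ h_{D^+} + h_R + O(1)$, it follows that
\[
\lim_{n \to \infty} \frac{h_A(f^n(x))}{\delta_f^n} = a^+ \hat h_{D^+, f}(x),
\]
which proves $l_f = 0$ and $\overline h_f \asymp \underline h_f \asymp \hat h_{D^+, f}$.

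For part (ii), suppose $x \in X(\overline{\mathbb Q})$ satisfies $\underline h_f(x) = 0$. By (i), this gives $\hat h_{D^+, f}(x) = 0$. Under Conjecture \ref{conj4.4}, the nef divisor $D^+$ is semiample --- via a suitable $\mathbb Q$-Cartier representative in the same eigendirection, whose existence is drawn from the structure of the nef cone in \cite{LOP16} --- and thus induces an $f$-equivariant surjective morphism $\phi \colon X \to Y$ onto a positive-dimensional projective variety $Y$ with $\phi^* H \equiv D^+$ for an ample $\mathbb R$-divisor $H$ on $Y$. The induced endomorphism $g \colon Y \to Y$ satisfies $g^* H \equiv \delta_f H$ with $\delta_f > 1$, hence is polarized, and Theorem \ref{thm4.1} forces $\phi(x)$ to be $g$-preperiodic. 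Therefore $O_f(x) \subset \phi^{-1}(O_g(\phi(x)))$, a proper closed subset of $X$, so $O_f(x)$ cannot be Zariski-dense, establishing Conjecture \ref{conj_main}.

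The hardest step is the rationality issue for $D^+$: since $\delta_f$ is typically an irrational algebraic integer, $D^+$ lies in $N^1(X)_{\mathbb R}$ but not in $N^1(X)_{\mathbb Q}$, whereas Conjecture \ref{conj4.4} as formulated concerns only nef Cartier divisors. Bridging this gap --- either via an $\mathbb R$-Cartier extension of the abundance conjecture or a case-by-case analysis of Calabi--Yau threefolds of Picard number three following \cite{LOP16}, \cite{LOP17} to locate a semiample rational representative of the ray $\mathbb R_{\geq 0} D^+$ --- is where the technical weight of the proof concentrates.
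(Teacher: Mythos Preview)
Your approach to part (i) is workable but leans on an external citation for the simplicity of $\delta_f$ as an eigenvalue of $f^*$. The paper establishes this directly and more informatively: since $f$ has infinite order, $\Aut(X)$ is infinite, which forces $c_2(X) \neq 0$ in $N_1(X)_{\mathbb R}$; then $f_* c_2 = c_2$ yields an eigenvector $D_0$ of $f^*$ with eigenvalue $1$. Together with $D^+$ and $D^-$ this gives three eigenvectors with distinct eigenvalues $\delta_f$, $1$, $\delta_f^{-1}$, so in particular $\rho(X)=3$, $\delta_{f^{-1}}=\delta_f$, and $f^*$ is diagonalizable over $\mathbb R$. With this in hand, the limit computation for (i) needs only Lemma \ref{lem4.3} as stated, with no extension to complex eigenvalues or Jordan blocks.

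Part (ii) has a genuine gap, and the resolution you propose cannot work. Since $f^*$ acts on the lattice $N^1(X)$ with determinant $\pm 1$, $\delta_f$ is an algebraic integer whose rational values could only be $\pm 1$; as $\delta_f>1$, it is irrational. Consequently the ray $\mathbb R_{>0} D^+$ contains no nonzero rational class: if $D' \in N^1(X)_{\mathbb Q}$ lay on it, then $f^*D' = \delta_f D'$ would be simultaneously rational and an irrational multiple of $D'$, forcing $D'=0$. So there is no ``$\mathbb Q$-Cartier representative in the same eigendirection'' to which Conjecture \ref{conj4.4} could be applied, and no $f$-equivariant semiample fibration arises from $D^+$ alone.

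The paper's key idea is to find rationality not on the ray of $D^+$ but in the \emph{plane} spanned by $D^+$ and $D^-$. The computation $(D^\pm \cdot c_2) = 0$ shows that $D^+, D^-$ span the two-dimensional hyperplane $c_2^\perp \subset N^1(X)_{\mathbb R}$, which is defined over $\mathbb Q$ because $c_2$ is. Hence some $B = aD^+ + bD^-$ with $a,b>0$ is a nef Cartier divisor, and Conjecture \ref{conj4.4} makes $B$ semiample. One does not build a polarized quotient; instead Corollary \ref{cor_Northcott} gives that $\{h_B(f^n(x))\}_n$ is unbounded for any dense orbit, and since $h_B = a\hat h_{D^+,f} + b\hat h_{D^-,f^{-1}} + O(1)$ with $\hat h_{D^-,f^{-1}}(f^n(x)) = \delta_f^{-n}\hat h_{D^-,f^{-1}}(x) \to 0$, this forces $\hat h_{D^+,f}(x)>0$. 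The $c_2$ argument thus does double duty: it pins down the spectrum for (i) and supplies the rational two-plane needed for (ii).
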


\begin{rem}\label{rem4.6}
As we will see in the proof, $\rho(X)$ is automatically equal to 3 under the assumption of 
Theorem \ref{thm4.5}.
However the author does not know 
any example of $(X,f)$ in the theorem 
at the moment.
\end{rem}

\begin{proof}[Proof of Theorem \ref{thm4.5}]
Since $f$ is an automorphism, $f^*: N^1(X) \to N^1(X)$ has the inverse 
$(f^{-1})^*: N^1(X) \to N^1(X)$ and so $|\det(f^*)|=1$.
Now we have $\rho(f^*)=\delta_f>1$,
so $f^*$ has an eigenvalue whose absolute value is in $(0,1)$.
Hence $\delta_{f^{-1}} = \rho((f^{-1})^*)=\rho((f^*)^{-1}) >1$.
We can take nef $\mathbb R$-divisors $D^+, D^-$ on $X$ such that 
$D^+,D^- \not\sim_{\mathbb R} 0$ and 
$f^*D^+ \sim_{\mathbb R} \delta_f D^+$, $(f^{-1})^*D^- \sim_{\mathbb R} \delta_{f^{-1}} D^-$.
Note that we have these in $\mathbb R$-linear equivalence,
not numerical equivalence, since $q(X)=0$.
Since $f$ is of infinite order as an element of $\Aut(X)$,
$\Aut(X)$ is an infinite group.
Then it follows that $c_2=c_2(X) \neq 0$ in $N_1(X)_{\mathbb R}$ 
(cf.~\cite{LOP17}).
Here $f_*c_2 = c_2$ and $f^*$ is the adjoint of $f_*$, so there is an $\mathbb R$-divisor 
$D_0$ such that $D_0 \not\sim_{\mathbb R} 0$ and $f^*D_0 \sim_{\mathbb R} D_0$.

Eventually, we have three eigenvectors $D^+,D^-,D_0$ of $f^*$ 
with three different eigenvalues 
$\delta_f, \delta_{f^{-1}}^{-1}, 1$, respectively.
Then $\rho(X)=3$ since $D^+,D^-,D_0$ are linearly independent.
Since $|\det(f^*)|=1$, we have $\delta_{f^{-1}}=\delta_f$.
Now $D^+, D^-$ are in the nef cone of $X$.
So, replacing $D_0$ by a non-zero multiple,
we may assume that $D=D^+ + D^- + D_0$ is ample.
Taking $h_{D^+}, h_{D^-}, h_{D_0}$ and  $h_D$ as satisfying 
$h_D=h_{D^+}+h_{D^-} + h_{D_0} \geq 1$,
we have $h_X \asymp h_D$.
Moreover, $h_D = \hat h_{D^+,f}+ \hat h_{D^-,f^{-1}} + h_{D_0} +O(1)$.

(i) Set $\phi= h_D - \hat h_{D^+,f}- \hat h_{D^-,f^{-1}} - h_{D_0}$.
Then 
$$ \frac{h_D \circ f^n}{\delta_f^n} 
= \hat h_{D^+,f}+ \frac{\hat h_{D^-,f^{-1}}}{\delta_f^{2n}} + \frac{h_{D_0} \circ f^n}{\delta_f^n} 
+\frac{\phi \circ f^n}{\delta_f^n}.$$
So $\lim_n \delta_f^{-n}  h_D \circ f^n= \hat h_{D^+,f}$ 
by Lemma \ref{lem4.3}.
Since $h_D \asymp h_X$, we have $l_f=0$ and 
$\overline h_f, \underline h_f \asymp \hat h_{D^+,f}$.

(ii) Since $(D^+ \cdot c_2)=(D^+ \cdot f_* c_2)=(f^*D^+ \cdot c_2)=\delta_f(D^+ \cdot c_2)$, 
we have $(D^+ \cdot c_2)=0$.
Similarly $(D^- \cdot c_2)=0$.
Let $c_2^{\perp} \subset N^1(X)_{\mathbb R}$ be the subspace of $N^1(X)_{\mathbb R}$ 
consisting of the elements whose intersection with $c_2$ is zero.
Then $c_2^{\perp}$ is a 2-dimensional rational subspace generated by $D^+,D^-$.
So $\mathbb R_{>0} D^+ + \mathbb R_{>0} D^-$ contains rational points.
Therefore $B=aD^+ + bD^-$ is a nef Cartier divisor for some $a,b >0$.
Applying Conjecture \ref{conj4.4} to $B$, it follows that $B$ is semiample.

Take any dense $f$-orbit $O_f(x)$.
By Corollary \ref{cor_Northcott}, $\{ h_B(f^n(x)) \}_{n=0}^\infty$ is upper unbounded.
Since $h_B=ah_{D^+}+bh_{D^-}+O(1)=a \hat h_{D^+,f}+b \hat h_{D^-,f^{-1}}+O(1)$,
$$\{ a\hat h_{D^+,f}(f^n(x))+b\hat h_{D^-,f^{-1}}(f^n(x)) \}_{n=0}^\infty 
= \{ a\delta_f^n \hat h_{D^+,f}(x)+b\delta_f^{-n} \hat h_{D^-,f^{-1}}(x) \}_{n=0}^\infty$$ is also 
upper unbounded.
Therefore $\hat h_{D^+,f}(x)$ must be positive.
Since $\underline h_f \asymp \hat h_{D^+,f}$, 
we obtain $\underline h_f(x)>0$.
\end{proof}


\section{Abelian varieties}\label{sec5}

For an abelian group $G$,
$G_{\tors}$ denotes the set of torsion elements of $G$.
The main result in this section is the following.

\begin{thm}\label{thm5.1}
Let $X$ be an abelian variety and $f$ an endomorphism 
(which is not necessarily an isogeny) 
on $X$ with $\delta_f>1$.
Then there is 
a proper abelian subvariety $B \subset X$ and a point $P_0 \in X(\overline{\mathbb Q})$ 
such that $B+P_0$ is $f$-invariant and 
$Z_f(\overline{\mathbb Q})=B(\overline{\mathbb Q})+P_0+X(\overline{\mathbb Q})_{\tors}$.
Moreover, Conjecture \ref{conj_zf} holds for $f$.
\end{thm}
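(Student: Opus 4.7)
My plan is to reduce to a group isogeny plus translation and then exploit the quadratic-form structure of the N\'eron--Tate height together with the Galois-equivariant Jordan decomposition of the isogeny to identify $Z_f(\overline{\mathbb Q})$ explicitly.

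Write $P_0 := f(0)$ and $\phi := f - P_0 \in \End(X)$; since every morphism of abelian varieties factors as a translation composed with a homomorphism and $f$ is surjective, $\phi$ is an isogeny. Translations act trivially on $N^1(X)_{\mathbb R}$, so $\delta_f = \delta_\phi > 1$, and induction gives $f^n(x) = \phi^n(x) + T_n$ with $T_n := \sum_{k=0}^{n-1}\phi^k(P_0)$. Fix a symmetric ample divisor $H$ on $X$ and let $\hat h = \hat h_H$ be the associated N\'eron--Tate height, a positive semi-definite quadratic form with $\hat h = h_H + O(1)$; because $\delta_f > 1$, replacing $h_X$ by $\hat h$ does not change $\overline h_f$, $\underline h_f$, or $Z_f$. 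Using the associated symmetric bilinear form $\langle\cdot,\cdot\rangle$ one obtains
$$\hat h(f^n(x)) = \hat h(\phi^n(x)) + 2\langle \phi^n(x), T_n\rangle + \hat h(T_n).$$

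Next I would analyze $\phi_*$ on $H_1(X,\mathbb Q)$: its characteristic polynomial has integer coefficients and eigenvalues of absolute value at most $\sqrt{\delta_f}$. Call an eigenvalue \emph{top} if $|\lambda| = \sqrt{\delta_f}$ and its generalized eigenspace attains the maximal Jordan block size (which determines $l_f$); call a Galois orbit \emph{slow} if it contains no top eigenvalue (Jordan block size is constant on a Galois orbit, but modulus may not be). Let $V_{\text{slow}}^{\mathbb Q}$ be the $\phi$-stable $\mathbb Q$-subspace spanned by the generalized eigenspaces of slow orbits; it is a rational sub-Hodge structure (Galois-invariance is built in by construction, and $\phi$ respects the Hodge decomposition), so by Poincar\'e reducibility it descends to an abelian subvariety $B \subsetneq X$, proper because $\delta_f > 1$ forces at least one non-slow orbit.

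On the quotient $\overline X := X/B$, the induced isogeny $\overline\phi$ has only non-slow eigenvalues; in particular $1$ is slow (as $|1|^2 = 1 < \delta_f$) and absorbed in $B$, so $1 - \overline\phi$ is invertible in $\End(\overline X)_{\mathbb Q}$. Solving $(1 - \overline\phi)(\overline{Q_0}) = \overline{P_0}$ and lifting produces $Q_0 \in X(\overline{\mathbb Q})$ with $f(Q_0) - Q_0 \in B(\overline{\mathbb Q})$, making $B + Q_0$ $f$-invariant. Changing variables to $y := x - Q_0$, an induction shows $f^n(x) = \phi^n(y) + Q_0 + S_n$ with $S_n \in B$, and the $B$-part together with the bilinear term contribute only $O(\delta_f^n n^{l_f - 1})$ to $\hat h(f^n(x))$ (by bounding the Jordan-growth on $B$, where the maximal Jordan block at the top modulus is at most $l_f/2$, and applying Cauchy--Schwarz). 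Hence the leading term is $\hat h(\phi^n(y))/(\delta_f^n n^{l_f})$, a positive semi-definite quadratic form in the image $\overline y$ of $y$ in $\overline X$; this vanishes iff $\overline y$ is torsion in $\overline X$, iff $x \in B(\overline{\mathbb Q}) + Q_0 + X(\overline{\mathbb Q})_{\tors}$. For Conjecture \ref{conj_zf}, the Mordell--Weil theorem makes $X(K)_{\tors}$ finite for any number field $K$; fixing a bound $N$ on its exponent, $Z_f(K) \subset \bigcup_{t \in X[N]}(B + Q_0 + t)$, which is $f$-invariant (since $\phi$ preserves $X[N]$ and $B + Q_0$ is $f$-stable) and proper. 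The main expected obstacle is rigorously establishing the rational sub-Hodge structure $V_{\text{slow}}^{\mathbb Q}$ and its descent to an abelian subvariety: while Galois-invariance of the slow-orbit criterion is automatic, one still needs Poincar\'e reducibility together with the careful asymptotic analysis on $\overline X$ to confirm that the leading-order quadratic form there is positive definite modulo $\overline X$-torsion (so the zero set is precisely torsion, and not something larger).
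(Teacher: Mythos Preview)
Your approach via $H_1(X,\mathbb Q)$ and Hodge theory is genuinely different from the paper's, which works entirely in $\End(X)_{\mathbb Q}$: it expands $\phi^n$ via a linear recurrence, extracts the leading coefficient $g_1 \in \End(X)_{\mathbb C}$, and then descends $g_1$ to finitely many rational endomorphisms using Lemma~\ref{lem5.4}, defining $B$ as the identity component of their common kernel. Your handling of the translation part---solving $(1-\overline\phi)(\overline{Q_0}) = \overline{P_0}$ on the quotient $\overline X$ where $1$ is not an eigenvalue---is a clean alternative to the paper's use of Silverman's splitting (Lemma~\ref{lem5.9}).

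However, your definition of $B$ via slow Galois orbits is \emph{wrong}, and the obstacle you flag at the end is not a technicality to be checked but an actual failure of the construction. Take $X = E \times E$ and $\phi = \begin{pmatrix} 2 & 1 \\ 0 & 2 \end{pmatrix}$ (this is Example~\ref{ex1.1} with $a=2$, $b=1$). The only eigenvalue of $\phi_*$ on $H_1$ is $2$, the single Galois orbit $\{2\}$ contains the top eigenvalue, so your $V_{\text{slow}} = 0$ and your $B = 0$. But the correct $B$ is $E \times \{0\}$: for $y = (x_1,0)$ one has $\phi^n(y) = (2^n x_1, 0)$, hence $\hat h(\phi^n(y)) = 4^n \hat h(x_1)$ and $\underline h_\phi(y) = \lim_n 4^n\hat h(x_1)/(4^n n^2) = 0$ even when $x_1$ is non-torsion. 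The slow directions are not detected by eigenvalue orbits alone: they live in the \emph{lower levels of the Jordan filtration} at a top eigenvalue, and these lower levels can cut out a genuine abelian subvariety even when the orbit itself is non-slow. The paper's method captures this automatically because the leading asymptotic coefficient $g_1$ in the example is (up to scalar) the nilpotent $\begin{pmatrix} 0 & 1 \\ 0 & 0 \end{pmatrix}$, whose kernel is exactly $E \times \{0\}$; Lemma~\ref{lem5.4} then turns this into an honest abelian subvariety. To salvage your approach you would have to replace the orbit-level criterion by something that also tracks the Jordan filtration---at which point you are essentially reconstructing $g_1$.
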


\begin{proof}
\begin{step}
First we assume that $f \in \End(X)$.
It is well-known that 
$\End(X)_{\mathbb Q}$ is a finite-dimensional $\mathbb Q$-vector space 
(cf.~\cite[Chapter IV, Section 19, Theorem 3]{Mum70}).
So the subspace 
generated by $\id_X, f, f^2,\ldots$ also has finite dimension.
Hence we can take a positive integer $m$ and $c_1,c_2,\ldots,c_m \in \mathbb Q$ such that 
$f^m=c_1f^{m-1}+c_2 f^{m-2}+ \cdots +c_m$ in $\End(X)_{\mathbb Q}$.
Let $A=(a_{i,j})_{i,j}$ be an $m \times m$-matrix defined by 
$a_{i,i+1}=1$ for $1 \leq i \leq m-1$, $a_{m,j}=c_{m-j+1}$ for $1 \leq j \leq m$, and 
$a_{i,j}=0$ otherwise.
Then we have 
$$ A
\begin{pmatrix}
f^k \\ f^{k+1}  \\ \vdots \\ f^{k+m-1}
\end{pmatrix}
= 
\begin{pmatrix}
f^{k+1} \\ f^{k+2}  \\ \vdots \\ f^{k+m}
\end{pmatrix}
$$
for every $k \in \mathbb Z_{\geq 0}$.
So, setting $\vec{e}_1=(1,0,0,\ldots,0)$ and 
$\vec{f}=
\begin{pmatrix}
\id_X \\ f \\ f^2 \\ \vdots \\ f^{m-1}
\end{pmatrix}
$,
we have $f^n=\vec{e}_1A^n\vec{f}$.
Take a complex invertible $m \times m$-matrix $P$ such that 
$\Lambda=P^{-1}AP$ is a Jordan normal form of $A$.
Then $f^n=\vec{e}_1 P \Lambda^n P^{-1}\vec{f}$.
Therefore $f^n$ is represented as 
$f^n=\sum_{i=1}^N \lambda_i^n n^{l_i} g_i$,
where $\lambda_i \in \mathbb C$, $l_i \in \mathbb Z_{\geq 0}$, 
and $g_i \in (\sum_{i=0}^{m-1} \mathbb C f^i) \setminus \{0\}$ with 
$(|\lambda_1|, l_1) > (|\lambda_2|, l_2) > \cdots > (|\lambda_N|,l_N)$ with respect to 
the lexicographic order.

Let $\hat h_X = \langle \cdot,\cdot \rangle$ be a N\'eron--Tate height on $X$.
Set $M=X(\overline{\mathbb Q})/X(\overline{\mathbb Q})_{\tors}$.
Then $\langle \cdot,\cdot \rangle$ is reduced to a $\mathbb Z$-bilinear form 
on $M \times M$.
Set $V_{\mathbb K}=M \otimes \mathbb K$ 
($\mathbb K=\mathbb Q, \mathbb R$ or $\mathbb C$).
Then  $\langle \cdot,\cdot \rangle$ (and so $\hat h_X$) is extended 
to a positive definite hermitian form on $V_{\mathbb C} \times V_{\mathbb C}$ by  
$\langle x,\alpha y\rangle = \alpha \langle x,y \rangle 
=\langle \overline{\alpha} x,y \rangle$ for $x,y \in M$ and $\alpha \in \mathbb C$ 
(cf.~\cite[Proposition B.5.3]{HiSi00}).
Take any $x \in V_{\mathbb R}$.
Then 
\begin{align*}
\hat h_X(f^n(x))
&= \hat h_X \left( \sum_{i=1}^N \lambda_i^n n^{l_i} g_i(x) \right) \\
&= |\lambda_1|^{2n} n^{2l_1} \hat h_X(g_1(x)) + o \left( |\lambda_1|^{2n} n^{2l_1} \right) 
\ \ (n \to \infty).
\end{align*}
Write $g_1=\phi+\sqrt{-1}\psi$ with $\phi, \psi \in \End(X)_{\mathbb R}$.
Then $g_1(x)=\phi(x)+\sqrt{-1}\psi(x)$ with $\phi(x),\psi(x) \in V_{\mathbb R}$.
So,  for any $x \in V_{\mathbb R}$, $g_1(x)=0$ if and only if $\phi(x)=\psi(x)=0$.
We use the following lemma due to Kawaguchi and Silverman.

\begin{lem}[{\cite[Lemma 30]{KaSi16b}}]\label{lem5.4}
Let $V, W$ be $\mathbb Q$-vector spaces,
$\mathcal D \subset \Hom_{\mathbb Q}(V,W)$ a $\mathbb Q$-vector subspace,
and $\alpha \in \mathcal D_{\mathbb R}$.
Then there are some $\beta_1,\ldots,\beta_m \in \mathcal D$ such that 
$\alpha(v)=0$ if and only if $\beta_1(v)=\cdots=\beta_m(v)=0$ for any $v \in V$.
\end{lem}

By this lemma, we can take $\beta_1,\ldots,\beta_k \in \End(X)_{\mathbb Q}$ such that 
$\phi(x)=0$ if and only if $\beta_1(x)=\cdots=\beta_k(x)=0$ for any $x \in V_{\mathbb Q}$.
Replacing $\beta_i$ by a multiple, we may assume that $\beta_i \in \End(X)$.
Similarly we can take $\gamma_1,\ldots,\gamma_l \in \End(X)$ such that 
$\psi(x)=0$ if and only if $\gamma_1(x)=\cdots=\gamma_l(x)=0$ 
for any $x \in V_{\mathbb Q}$.
Each member of $\End(X)$ has a kernel as an algebraic subgroup of $X$,
so there is an abelian subvariety $B \subset X$ such that 
$\{ x \in X(\overline{\mathbb Q}) \mid g_1(x)=0 \ \mathrm{in} \ V_{\mathbb C} \}
=B(\overline{\mathbb Q}) + X(\overline{\mathbb Q})_{\tors}$.
Here $B$ is a proper abelian subvariety since $g_1 \neq 0$.

Using Theorem \ref{thm3.12}, we obtain $(|\lambda_1|^2,2l_1)=(\delta_f,l_f)$.
Eventually we have $\overline h_f(x) \asymp \underline h_f(x) \asymp \hat h_X(g_1(x))$ 
for every $x \in X(\overline{\mathbb Q})$.
Hence 
$Z_f(\overline{\mathbb Q})
=\{ x \in X(\overline{\mathbb Q}) \mid g_1(x)=0 \ \mathrm{in} \ V_{\mathbb C} \}
=B(\overline{\mathbb Q}) + X(\overline{\mathbb Q})_{\tors}$.
Since $Z_f(\overline{\mathbb Q})$ is $f$-invariant, $B$ is also $f$-invariant.
\end{step}

\begin{step}
Let us consider the general case.
Set $f=\tau_P \circ \phi$, where $\tau_P$ is the translation by $P$ and $\phi \in \End(X)$.

We use the following lemma due to Silverman 
(cf.~{\cite[Proof of Theorem 2]{Sil17}}).

\begin{lem}\label{lem5.9}
Let $X$ be an abelian variety and $\phi \in \End(X)$.
Then there are abelian subvarieties $X_1,X_2 \subset X$ such that 
\begin{itemize}
\item The addition 
$\lambda: X_1 \times X_2 \to X$, $\lambda(x_1,x_2)=x_1+x_2$ is an isogeny.

\item $\phi(X_i) \subset X_i$ $(i=1,2)$. Set $\phi_i=\phi|_{X_i}$.

\item $(\id_{X_1}-\phi_1)(X_1)=X_1$.

\item $\delta_{\phi_2}=1$.
\end{itemize}
\end{lem}
Take $P_i \in X_i$ such that $P=P_1+P_2$ and set $f_i=\tau_{P_i} \circ \phi_i$ $(i=1,2)$.
Then 
$\lambda(f_1(x_1),f_2(x_2))
=\phi_1(x_1)+P_1+\phi_2(x_2)+P_2
=\phi(x_1+x_2)+P=f(\lambda(x_1,x_2))$.
Thus $\lambda \circ (f_1 \times f_2) = f \circ \lambda$.
Since translation maps induce the identity map on $N^1(X)_{\mathbb R}$, we have 
$\delta_\phi=\delta_f>1$, $\delta_{\phi_1}=\delta_{f_1}$ and $\delta_{\phi_2}=\delta_{f_2}=1$.
So $\delta_{f_1}>1=\delta_{f_2}$.
By Lemma \ref{lem3.7} and Lemma \ref{lem3.8}, we have 
$\underline h_f \circ \lambda \asymp \underline h_{f_1 \times f_2} 
\asymp \underline h_{f_1}$ and so $Z_f=\lambda(Z_{f_1} \times X_2)$.

Since $(\id_{X_1}-\phi_1)(X_1)=X_1$, there is a point $P_0 \in X_1(\overline{\mathbb Q})$ 
such that $P_0-\phi_1(P_0)=P_1$.
Then 
$(f_1 \circ \tau_{P_0})(x_1)
=\phi_1(x_1+P_0)+P_1
=\phi_1(x_1)+\phi_1(P_0)+P_1
=\phi_1(x_1)+P_0
=(\tau_{P_0} \circ \phi_1)(x_1)$.
Thus $f_1 \circ \tau_{P_0}=\tau_{P_0} \circ \phi_1$.
By Lemma \ref{lem3.8} (iii), $\underline h_{f_1} \circ \tau_{P_0}=\underline h_{\phi_1}$ 
and so $Z_{f_1}=\tau_{P_0}(Z_{\phi_1})=Z_{\phi_1}+P_0$.
By Step 1, there is a proper abelian subvariety $B_1$ of $X_1$ such that 
$Z_{\phi_1}=B_1+(X_1)_{\tor}$.
As a consequence, we have 
\begin{align*}
Z_f
&=Z_{f_1}+X_2 \\
&=Z_{\phi_1}+P_0+X_2 \\
&=B_1+(X_1)_{\tor}+P_0+X_2 \\
&=(B_1+X_2)+P_0+(X_1)_{\tor}+(X_2)_{\tor} \\
&=B+P_0+X_{\tor},
\end{align*}
where we set $B=B_1+X_2$.
Then $B$ is a proper abelian subvariety of $X$.
We compute 
\begin{align*}
f(B+P_0)
&=\phi(B_1+X_2+P_0)+P \\
&= \phi_1(B_1)+\phi_2(X_2)+\phi_1(P_0)+P \\
&\subset B_1+X_2+\phi_1(P_0)+P \\
&= B_1+(X_2+P_2)+(\phi_1(P_0)+P_1) \\
&=B_1+X_2+P_0 \\
&=B+P_0.
\end{align*}
Thus $B+P_0$ is $f$-invariant.
\end{step}

\begin{step}
Finally we prove that Conjecture \ref{conj_zf} holds for $f$.
By Step 2, we have 
$Z_f(\overline{\mathbb Q})
= B(\overline{\mathbb Q}) +X(\overline{\mathbb Q})_{\tors}+P_0$
for a proper abelian subvariety $B \subset X$ and a point $P_0 \in X(\overline{\mathbb Q})$ 
such that $f(B+P_0) \subset B+P_0$.

Let $\pi: X \to Y=X/B$ be the quotient map.
Take a number field $K$ where 
$X,Y,\pi$ are defined and $P_0 \in X(K)$.
Take any $x \in Z_f(K)$.
Then $x-P_0 \in B(\overline{\mathbb Q})+X(\overline{\mathbb Q})_{\tor}$,
so $\pi(x-P_0) \in Y(K)_{\tor}$.
Let $N$ be the order of the finite group $Y(K)_{\tor}$,
then $\pi(N(x-P_0))=N\pi(x-P_0)=0$ and so $N(x-P_0) \in B(K)$.
Therefore $Z_f(K) \subset [N]^{-1}(B(K))+P_0$.
The Chevalley--Weil theorem (Theorem \ref{thm_CW}) implies that there is a finite extension 
$L \supset K$ such that $[N]^{-1}(B(K)) \subset B(L)$.
So we have $Z_f(K) \subset B(L)+P_0$.
\end{step}
\end{proof}

If $f$ is a self-isogeny on a power of an elliptic curve, we can compute 
$\delta_f$ and $l_f$ from the matrix representation of $f$.

\begin{defn}\label{defn5.1}
Take $A \in M_r(\mathbb C)$, a complex $r \times r$-matrix.
Let $\Lambda=\Lambda_1 \oplus \cdots \oplus \Lambda_t$ be the Jordan normal form 
of $A$, where $\Lambda_i$ is a Jordan block of size $(l_i+1) \times (l_i+1)$ with 
eigenvalue $\lambda_i$.
Then we define $l(A)$ as $l(A)=\max \{ l_i \mid |\lambda_i|=\rho(A) \}$.
\end{defn}

\begin{thm}\label{thm5.2}
Let $E$ be an elliptic curve, $X=E^r$, and $f \in \End(X)$ a self-isogeny.
Represent $f$ as $f(x_1, \ldots,x_r)=(\sum_j a_{1j}x_j,\ldots,\sum_j a_{rj}x_j)$, 
where $A=(a_{ij}) \in M_r(\End(E))$.
Then we have $\delta_f=\rho(A)^2$ and $l_f=2l(A)$.
\end{thm}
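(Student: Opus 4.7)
My strategy is to reduce to the linear-recurrence analysis already carried out in Step 1 of the proof of Theorem \ref{thm5.1}. That step decomposes, in $\End(X)_{\mathbb C}$,
$$f^n = \sum_{i=1}^N \lambda_i^n n^{l_i} g_i,$$
where $\lambda_i \in \mathbb C$, $l_i \in \mathbb Z_{\geq 0}$, each $g_i \neq 0$, and the pairs $(|\lambda_i|, l_i)$ strictly decrease in lexicographic order; and then, via Theorem \ref{thm3.12}, it identifies $(\delta_f, l_f) = (|\lambda_1|^2, 2l_1)$. So the task reduces to identifying $(|\lambda_1|, l_1)$ with $(\rho(A), l(A))$.

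To carry this out, I would make explicit the complex structure on
$$\End(X)_{\mathbb C} = M_r(\End(E) \otimes_{\mathbb Z} \mathbb C).$$
Since $\End(E)$ is either $\mathbb Z$ (if $E$ has no complex multiplication) or an order in an imaginary quadratic field $K$ (the CM case), I would fix an embedding $\iota\colon \End(E) \hookrightarrow \mathbb C$. The invariants $\rho(A)$ and $l(A)$ of Definition \ref{defn5.1} are then independent of the choice of $\iota$, because the only alternative choice in the CM case is $\bar\iota$ and complex conjugation preserves spectral radii and Jordan block sizes. In the non-CM case $\End(E) \otimes_{\mathbb Z} \mathbb C = \mathbb C$, so $\End(X)_{\mathbb C} = M_r(\mathbb C)$ with $f$ corresponding to $\iota(A)$; the $\lambda_i$ are precisely the distinct complex eigenvalues of $\iota(A)$ and the $l_i$ are (largest Jordan block size for $\lambda_i$) $-1$, which by Definition \ref{defn5.1} immediately gives $(|\lambda_1|, l_1) = (\rho(A), l(A))$. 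In the CM case $\End(E) \otimes_{\mathbb Z} \mathbb C \cong \mathbb C \oplus \mathbb C$ via $(\iota, \bar\iota)$, so $\End(X)_{\mathbb C} = M_r(\mathbb C) \oplus M_r(\mathbb C)$ with $f$ corresponding to $(\iota(A), \bar\iota(A))$; the $\lambda_i$'s run over the eigenvalues of both factors, whose spectral data are complex conjugates of each other and hence yield the same maxima, so again $(|\lambda_1|, l_1) = (\rho(A), l(A))$. Combining these cases gives $(\delta_f, l_f) = (\rho(A)^2, 2l(A))$.

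\textbf{Main obstacle.} The only genuine subtlety is the bookkeeping in the CM case: one must verify that the Jordan-form decomposition of $A^n$ in $\End(X)_{\mathbb C}$ respects the algebra splitting $\End(E) \otimes_{\mathbb Z} \mathbb C \cong \mathbb C \oplus \mathbb C$, and that the two complex-conjugate factors contribute identical spectral data so that no larger $|\lambda_i|$ or $l_i$ is missed. Once the complex model of $\End(X)_{\mathbb C}$ is set up correctly, the identification $(\delta_f, l_f) = (\rho(A)^2, 2l(A))$ follows formally from Definition \ref{defn5.1}.
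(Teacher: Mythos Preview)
Your approach is correct and takes a genuinely different route from the paper's proof. The paper does not invoke Step~1 of Theorem~\ref{thm5.1}; instead it conjugates the matrix $A$ itself (not a companion matrix) to Jordan normal form and computes $\hat h_X(f^n(x))$ directly by expanding the N\'eron--Tate height term by term. In the CM case the paper does not split $\End(E)\otimes_{\mathbb Q}\mathbb C\cong\mathbb C\oplus\mathbb C$; rather it observes that $V_{\mathbb R}$ already carries a $\mathbb C$-vector-space structure via the action of $\End(E)_{\mathbb R}=\mathbb C$, proves the compatibility $\langle\alpha x,\alpha y\rangle_E=|\alpha|^2\langle x,y\rangle_E$ (Lemma~\ref{lem5.7}), and then runs the same Jordan-form expansion as in the non-CM case. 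What this buys the paper is an explicit asymptotic $\overline h_f\asymp\underline h_f\asymp\sum_i\hat h_E\circ\Phi_{i,l}$, not just the pair $(\delta_f,l_f)$. Your route is shorter because it recycles the machinery of Theorem~\ref{thm5.1}, but yields only the numerical invariants. One point you should make explicit: the $\lambda_i,l_i$ in Step~1 come from the Jordan form of a companion matrix for an arbitrary $\mathbb Q$-linear recurrence satisfied by $f$, not from $A$ directly. Once the terms with $g_i=0$ are dropped, what remains is governed by the minimal polynomial of $f$ in $\End(X)_{\mathbb C}$; this minimal polynomial equals that of $\iota(A)\in M_r(\mathbb C)$ in the non-CM case and $\mathrm{lcm}(\mu_{\iota(A)},\mu_{\bar\iota(A)})$ in the CM case, from which the identification $(|\lambda_1|,l_1)=(\rho(A),l(A))$ follows.
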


\begin{proof}
It is well-known that 
$\End(E)_{\mathbb Q}=\mathbb Q(\sqrt{-d})$ for some 
$d \in \mathbb Z_{\geq 0}$.
Set $\omega=\sqrt{-d}$.
Let $\hat h_E(x)=\langle x,x \rangle_E$ be a N\'eron--Tate height on $E$.
For $x=(x_1,\ldots,x_r), y=(y_1,\ldots,y_r) \in X(\overline{\mathbb Q})$,
set $\langle x,y \rangle_X=\sum_{i=1}^r \langle x_i,y_i \rangle_E$,
$\hat h_X(x)=\langle x,x \rangle_X=\sum_{i=1}^r \hat h_E(x_i)$.
Clearly $\hat h_X$ is a N\'eron--Tate height on $X$.
Set $M=E(\overline{\mathbb Q})/E(\overline{\mathbb Q})_{\tors}$.
Then $\langle \cdot,\cdot \rangle_E$ is reduced to  a $\mathbb Z$-bilinear form 
on $M \times M$.
Set $V_{\mathbb K}=M \otimes \mathbb K$ $(\mathbb K=\mathbb Q,\  \mathbb R 
\mathrm{\ or\ }\mathbb C)$.

Take $P \in GL_r(\mathbb C)$ such that $\Lambda=PAP^{-1}$ is a Jordan normal form.
Set $\Lambda=\Lambda_1 \oplus \cdots \oplus \Lambda_t$, where $\Lambda_i$ is 
a Jordan block of size $(l_i +1) \times (l_i+1)$ with eigenvalue $\lambda_i$.
Set $\rho_i=|\lambda_i|$, $\rho=\rho(A)$, $l=l(A)$.
We may assume that, in the lexicographic order,
$(\rho_i,l_i)=(\rho,l)$ for $1 \leq i \leq s$ and $(\rho_i,l_i) < (\rho,l)$ for $s+1 \leq i \leq t$.
We will prove the theorem in the cases when $\omega=0$ and $\omega \neq 0$,
respectively.

\underline{\textbf{The $\omega=0$ case.}}

This is the case when $\End(E)_{\mathbb Q}=\mathbb Q$.
We extend $\langle \cdot,\cdot \rangle_E$ to a hermitian form on $V_{\mathbb C}$ as 
$\langle x,\alpha y \rangle_E=\alpha \langle x,y \rangle_E
=\langle \overline \alpha x, y \rangle_E$ for $x,y \in M$, $\alpha \in \mathbb C$.
Then $\hat h_E, \langle \cdot,\cdot \rangle_X, \hat h_X$ are also extended and 
$\langle \cdot,\cdot \rangle_E, \langle \cdot,\cdot \rangle_X$ are positive definite 
hermitian forms.
We define $\mathbb C$-linear maps $F, G, \Phi: V_{\mathbb C}^r \to V_{\mathbb C}^r$ 
as $F(x)=Ax$, $G(x)=\Lambda x$ and $\Phi(x)=Px$.
Then $\Phi \circ F=G \circ \Phi$.
Take any $y=(y_1,\ldots,y_t) \in V_{\mathbb C}^r$,
where $y_i=(y_{i,0},\ldots,y_{i,l_i})$, $y_{i,j} \in V_{\mathbb C}$.
Then 
$$
\begin{cases}
G^n(y)=(\Lambda_1^n y_1,\ldots,\Lambda_t^n y_t), \\
\Lambda_i^n y_i=(\sum_{k=0}^{l_i} \binom{n}{k} \lambda_i^{n-k} y_{i,k},
\sum_{k=0}^{l_i-1} \binom{n}{k} \lambda_i^{n-k} y_{i,k+1},\ldots,\lambda_i^n y_{i,l_i}).
\end{cases}
$$
So 
\begin{align*}
\hat h_X(G^n(y))
&= \sum_{i=1}^t \sum_{j=0}^{l_i} 
\hat h_E \left( \sum_{k=0}^{l_i-j} \binom{n}{k} \lambda_i^{n-k}y_{i,k+j} \right) \\
&= \sum_{i=1}^t \sum_{j=0}^{l_i} \sum_{k,k'=0}^{l_i-j} \binom{n}{k} \binom{n}{k'}
\rho_i^{2n} \langle \lambda_i^{-k} y_{i,k+j}, \lambda_i^{-k'} y_{i,k'+j} \rangle_E \\
&= \sum_{i=1}^s \binom{n}{l}^2 \rho^{2n-2l} \hat h_E(y_{i,l}) + o(\rho^{2n} n^{2l})
\ \ (n \to \infty).
\end{align*}
This implies that 
$$\lim_{n \to \infty} \frac{\hat h_X(G^n(y))}{\rho^{2n}n^{2l}}
= \frac{1}{(l! \rho^l)^2} \sum_{i=1}^s \hat h_E(y_{i,l}).$$

We prepare the following easy lemma.
\begin{lem}\label{lem5.3}
Let $\langle \cdot,\cdot \rangle, \langle \cdot,\cdot \rangle'$ be 
positive definite hermitian (resp.~quadratic) forms 
on a finite dimensional $\mathbb C$-vector 
space (resp.~$\mathbb R$-vector space) $W$.
Set $f(x)=\langle x,x \rangle$, $g(x)=\langle x,x \rangle'$.
Then $f \asymp g$.
\end{lem}

\begin{proof}
Introduce a norm $||\cdot||$ on $W$ and let $S$ be the subset of $W$ 
consisting of the elements of norm 1.
Since $f/g$ is a non-vanishing continuous function on the compact space $S$,
we can take $A,B>0$ such that $A \leq f(x)/g(x) \leq B$ for $x \in S$.
Here $f(ax)/g(ax)=f(x)/g(x)$ for $x \in W  \setminus \{0\}$ and 
$a \neq 0$. 
So $Ag(x) \leq f(x) \leq Bg(x)$ for $x \in W$.
Thus $f \asymp g$.
\end{proof}

Fix a number field $K$ where all concerned are defined.
Since $\hat h_X, \hat h_X \circ \Phi^{-1}$ are positive definite hermitian forms on 
the finite dimensional $\mathbb C$-vector space $X(K)_{\mathbb C}$,
we can take $C_1,C_2>0$ such that 
$C_1 \hat h_X \leq \hat h_X \circ \Phi^{-1} \leq C_2 \hat h_X$ on $X(K)_{\mathbb C}$ 
by Lemma \ref{lem5.3}.
Take any $x \in X(K)$. Then $f^n(x)=F^n(x)=\Phi^{-1}G^n\Phi(x)$. So 
$$C_1 \frac{\hat h_X(G^n\Phi(x))}{\rho^{2n}n^{2l}} \leq \frac{\hat h_X(f^n(x))}{\rho^{2n}n^{2l}} 
\leq C_2 \frac{\hat h_X(G^n\Phi(x))}{\rho^{2n}n^{2l}}.$$
Set $\hat h_X^+=\hat h_X+1$, then $\hat h_X^+ \asymp h_X$.
Represent $\Phi(x)$ as 
$$\Phi(x)=(\Phi_{1,0}(x),\ldots,\Phi_{1,l_1}(x),\ldots,\Phi_{t,0}(x),\ldots,\Phi_{t,l_t}(x)).$$
By the above calculation, we have 
$$\frac{C_1}{(l!\rho^l)^2} \sum_{i=1}^s \hat h_E(\Phi_{i,l}(x)) 
\leq \liminf_{n \to \infty} \frac{\hat h_X^+(f^n(x))}{\rho^{2n}n^{2l}}$$
and
$$\limsup_{n \to \infty} \frac{\hat h_X^+(f^n(x))}{\rho^{2n}n^{2l}}
\leq \frac{C_2}{(l!\rho^l)^2} \sum_{i=1}^s \hat h_E(\Phi_{i,l}(x)).$$
Now $\Phi_{i,l}: V_{\mathbb C}^r \to V_{\mathbb C}$ is not identically zero for each $i$ 
since $\Phi:V_{\mathbb C}^r \to V_{\mathbb C}^r$ is an automorphism.
Hence $\sum_{i=1}^s \hat h_E \circ \Phi_{i,l}$ is not identically zero 
on $X(\overline{\mathbb Q})$.
So Theorem \ref{thm3.12} implies that $(\delta_f,l_f)=(\rho^2,2l)$ and 
$\overline h_f \asymp \underline h_f \asymp \sum_{i=1}^s \hat h_E \circ \Phi_{i,l}$.

\underline{\textbf{The $\omega \neq 0$ case.}} 

This is the case when $E$ has complex multiplication.
We extend $\langle \cdot,\cdot \rangle_E$ to a quadratic form on $V_{\mathbb R}$ as 
$\langle x,\alpha y \rangle_E=\alpha \langle x,y \rangle_E
=\langle \alpha x, y \rangle_E$ for $x,y \in M$, $\alpha \in \mathbb R$.
Then $\hat h_E, \langle \cdot,\cdot \rangle_X, \hat h_X$ are also extended and 
$\langle \cdot,\cdot \rangle_E, \langle \cdot,\cdot \rangle_X$ are positive definite 
quadratic forms.
Since $\mathbb Q(\omega)$ acts on $V_{\mathbb Q}$ and 
$\mathbb Q(\omega) \otimes_{\mathbb Q} \mathbb R = \mathbb R(\omega)=\mathbb C$,
$V_{\mathbb R}$ already has a structure of $\mathbb C$-vector space.
We define $\mathbb C$-linear maps $F, G, \Phi: V_{\mathbb R}^r \to V_{\mathbb R}^r$ 
as $F(x)=Ax$, $G(x)=\Lambda x$ and $\Phi(x)=Px$.
Here we will make some lemmas.

\begin{lem}\label{lem5.5}
Let $W$ be a smooth projective variety, $f,g$ endomorphisms on $W$ with 
$\delta_f,\delta_g >1$ and $f \circ g = g \circ f$.
Let $D$ be an $\mathbb R$-divisor on $W$ such that $D \not\equiv 0$,
$f^*D \equiv \alpha D$, $g^*D \equiv \beta D$ for some $\alpha > \sqrt{\delta_f}$ and 
$\beta > \sqrt{\delta_g}$.
Then $\hat h_{D,f}=\hat h_{D,g}$.
\end{lem}

\begin{proof}
Since $h_D \circ f=h_D+O(\sqrt{h_W})$, we can take $C>0$ such that 
$|h_D \circ f - h_D| \leq C\sqrt{h_W}$.
Take $\varepsilon >0$ such that $\delta_f+\varepsilon < \delta_f^2$ and 
$\delta_g+\varepsilon < \delta_g^2$, and use Theorem \ref{thm_KSM} to take 
$C'>0$ such that 
$h_W \circ f^n \leq C' (\delta_f+\varepsilon)^n h_W$ and 
$h_W \circ g^n \leq C' (\delta_g+\varepsilon)^n h_W$ for every $n$.
We compute 
\begin{align*}
\left| \frac{h_D \circ f^k}{\alpha^k}-\frac{h_D \circ f^{k-1}}{\alpha^{k-1}} \right|
&\leq C \frac{\sqrt{h_W \circ f^{k-1}}}{\alpha^k} \\
&\leq C\sqrt{C'} \frac{\sqrt{(\delta_f+\varepsilon)^{k-1} h_W}}{\alpha^k} \\
&\leq C\sqrt{C'} \left( \frac{\delta_f+\varepsilon}{\alpha^2} \right)^{k/2}  \sqrt{h_W}.
\end{align*}
So we obtain 
\begin{align*}
\left| \frac{h_D \circ f^n}{\alpha^n}-h_D\right|
&\leq \sum_{k=1}^n 
\left| \frac{h_D \circ f^k}{\alpha^k}-\frac{h_D \circ f^{k-1}}{\alpha^{k-1}} \right| \\
&\leq C\sqrt{C'} \sum_{k=1}^n 
\left( \frac{\delta_f+\varepsilon}{\alpha^2} \right)^{k/2} \sqrt{h_W} \\
&\leq C'' \sqrt{h_W},
\end{align*}
where $C''=C\sqrt{C'} \sum_{k=1}^\infty ((\delta_f+\varepsilon)/\alpha^2)^{k/2}$,
and 
\begin{align*}
\left| \frac{h_D \circ f^n \circ g^n}{\alpha^n \beta^n}-\frac{h_D \circ g^n}{\beta^n} \right|
&\leq C'' \frac{\sqrt{h_W \circ g^n}}{\beta^n} \\
&\leq C'' \frac{\sqrt{C'(\delta_g +\varepsilon)^n h_W}}{\beta^n} \\
&= C''\sqrt{C'} \left( \frac{\delta_g+\varepsilon}{\beta^2} \right)^{n/2} \sqrt{h_W}.
\end{align*}
So 
$$\hat h_{D,f \circ g}=\lim_{n \to \infty} \frac{h_D \circ f^n \circ g^n}{\alpha^n \beta^n}
= \lim_{n \to \infty} \frac{h_D  \circ g^n}{ \beta^n} = \hat h_{D,g}.$$
Similarly $\hat h_{D,f \circ g}= \hat h_{D,f}$.
So we obtain $\hat h_{D,f}=\hat h_{D,g}$.
\end{proof}

\begin{lem}\label{lem5.6}
Let $\phi \in \End(E)$ be an isogeny with $\delta_{\phi} >1$.
Then $\hat h_E \circ \phi =\delta_{\phi} \hat h_E$.
\end{lem}

\begin{proof}
Let $H$ be a symmetric ample divisor on $E$.
Then $[2]^*H \sim 4H$ and $f^*H \equiv \deg(f) H = \delta_f H$.
Since $f$ is a group homomorphism, $f \circ [2] = [2] \circ f$.
So $\hat h_E=\hat h_{H, [2]}=\hat h_{H,f}$ by Lemma \ref{lem5.5} and hence 
$\hat h_E \circ f=\hat h_{H,f} \circ f=\delta_f \hat h_{H,f}=\delta_f \hat h_E$.
\end{proof}

\begin{lem}\label{lem5.7}
We have $\langle \alpha x,\alpha y \rangle_E= |\alpha |^2 \langle x,y \rangle_E$ for 
$x,y \in V_{\mathbb R}$, $\alpha \in \mathbb C$.
\end{lem}

\begin{proof}
We can take a positive integer $m$ such that 
$\phi=m\omega \in \End(E)$.
Then $\phi^2=[-m^2d]$, so $\deg(\phi)=\sqrt{\deg(\phi^2)}=\sqrt{m^4d^2}=m^2d$.
By Lemma \ref{lem5.6}, $\hat h_E \circ \phi= m^2d \hat h_E$.
For any $x,y \in E(\overline{\mathbb Q})$,
\begin{align*}
\langle \phi(x),\phi(y) \rangle_E
&=\frac{1}{2} \left(\hat h_E(\phi(x)+\phi(y))-\hat h_E(\phi(x))-\hat h_E(\phi(y)) \right) \\
&=\frac{1}{2} \left(m^2d \hat h_E(x+y)-m^2d \hat h_E(x)-m^2d \hat h_E(y) \right) \\
&=m^2d \langle x,y \rangle_E.
\end{align*}
By linearity, $\langle \phi(x),\phi(y) \rangle_E= m^2d \langle x,y \rangle_E$ holds 
for $x,y \in V_{\mathbb R}$.
Then  
$\langle \omega x,\omega y \rangle_E  =m^{-2} \langle \phi(x),\phi(y) \rangle_E 
= d \langle x,y \rangle_E$ for $x,y \in V_{\mathbb R}$.

Take any $\alpha \in \mathbb C$.
Set $\alpha=\alpha_1+ \alpha_2 \omega$, $\alpha_1,\alpha_2 \in \mathbb R$.
Then 
\begin{align*}
\langle \alpha x,\alpha y \rangle_E
&= \langle \alpha_1x, \alpha_1y \rangle_E+\langle \alpha_1x, \alpha_2 \omega y \rangle_E
+ \langle \alpha_2 \omega x, \alpha_1 y \rangle_E
+ \langle \alpha_2 \omega x, \alpha_2 \omega y \rangle_E \\
&= \alpha_1^2 \langle x,y \rangle_E + \alpha_1\alpha_2 \langle x,\omega y \rangle_E
+ \alpha_1 \alpha_2 \langle \omega x,y \rangle_E+ \alpha_2^2d \langle x,y \rangle_E \\
&= |\alpha|^2 \langle x,y \rangle_E
+ \alpha_1 \alpha_2(\langle x, \omega y \rangle_E + \langle \omega x, y \rangle_E).
\end{align*}
Here $\langle x, \omega y \rangle_E = d^{-1}\langle \omega x, \omega^2 y \rangle_E
= d^{-1}\langle \omega x, -dy \rangle_E = -\langle \omega x, y \rangle_E$,
so $\langle \alpha x,\alpha y \rangle_E= |\alpha|^2 \langle x,y \rangle_E$.
\end{proof}

Take $y \in V^r_{\mathbb R}$.
We use the notation as in the $\omega=0$ case.
Using Lemma \ref{lem5.7}, we compute  
\begin{align*}
\hat h_X(G^n(y))
&= \sum_{i=1}^t \sum_{j=0}^{l_i} 
\hat h_E \left( \sum_{k=0}^{l_i-j} \binom{n}{k} \lambda_i^{n-k}y_{i,k+j} \right) \\
&= \sum_{i=1}^t \sum_{j=0}^{l_i} \sum_{k,k'=0}^{l_i-j} \binom{n}{k} \binom{n}{k'}
\rho_i^{2n} \langle \lambda_i^{-k} y_{i,k+j}, \lambda_i^{-k'} y_{i,k'+j} \rangle_E \\
&= \sum_{i=1}^s \binom{n}{l}^2 \rho^{2n-2l} \hat h_E(y_{i,l}) + o(\rho^{2n} n^{2l})
\ \ (n \to \infty).
\end{align*}
This implies that 
$$\lim_{n \to \infty} \frac{\hat h_X(G^n(y))}{\rho^{2n}n^{2l}}
= \frac{1}{(l! \rho^l)^2} \sum_{i=1}^s \hat h_E(y_{i,l}).$$
Then we obtain $(\delta_f,l_f)=(\rho^2,2l)$ as in the $\omega=0$ case.
\end{proof}


\section{Automorphisms on surfaces}\label{sec6}

In this section, we study automorphisms on surfaces.
Kawaguchi \cite{Kaw08} constructed nef canonical heights for an automorphism 
and its inverse, and proved that the zero set of the sum of those heights is the union 
of the periodic curves and the periodic points (\cite[Theorem 5.2]{Kaw08}).
The arithmetic degrees for automorphisms on surfaces are well understood by 
 Kawaguchi--Silverman \cite{KaSi14}.
We compute the ample canonical heights for automorphisms on surfaces in this section.
However,
most of the computations here are essentially contained in Kawaguchi \cite{Kaw08} 
and Kawaguchi--Silverman \cite{KaSi14}.

As a related result, Jonsson--Reschke \cite{JoRe15} proved that 
a nef canonical height for a birational surface self-map converges at every point with 
well-defined forward orbit.
As we will see in Theorem \ref{thm6.6} below, such a nef canonical height is equivalent 
to the upper and lower ample canonical heights if the self-map is an automorphism.

Our aim in this section is to prove the following 
(cf.~{\cite[Theorem 5.2]{Kaw08}} and {\cite[Theorem 9, 10]{KaSi14}}).

\begin{thm}\label{thm6.6}
Let $X$ be a surface and $f$ an automorphism on $X$ with $\delta_f>1$.

\begin{itemize}
\item[(i)]
We have $l_f=0$, and there is a nef canonical height $\hat h^+$ such that 
$\overline h_f \asymp \underline h_f \asymp \hat h^+$.
\item[(ii)]
Take $x \in X(\overline{\mathbb Q})$.
Then the following are equivalent.
\begin{itemize}
\item[(1)]
$O_f(x)$ is dense.
\item[(2)]
$\overline h_f(x)>0$.
\item[(3)]
$\underline h_f(x)>0$.
\item[(4)]
$\alpha_f(x)=\delta_f$.
\end{itemize}
Moreover, if $\alpha_f(x)<\delta_f$, then $\alpha_f(x)=1$.
\item[(iii)]
Let $\mathcal C=\{C_i\}$ be the set of $f$-periodic irreducible curves on $X$.
Then $Z_f(\overline{\mathbb Q})
=\Per_f(\overline{\mathbb Q}) \cup \bigcup_i C_i(\overline{\mathbb Q})$,
and 
$\Per_f(K) \setminus \bigcup_i C_i(K)$ 
is finite for any number field $K$.
\end{itemize}
\end{thm}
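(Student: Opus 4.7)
The argument parallels Kawaguchi's analysis in \cite{Kaw08} combined with the orbit-type classification of Kawaguchi--Silverman in \cite{KaSi14}. Since $f$ is an automorphism of a surface, $f^{\ast}$ acts on $N^{1}(X)_{\mathbb R}$ with $|\det(f^{\ast})|=1$, so by the Hodge index theorem the spectrum of $f^{\ast}$ consists of $\delta_{f}$, $\delta_{f}^{-1}$, and eigenvalues on the unit circle; in particular $\delta_{f^{-1}}=\delta_{f}$. Applying Theorem \ref{thm_pfb} to $f$ and to $f^{-1}$ yields nef $\mathbb R$-divisors $D^{\pm}\not\equiv 0$ with $f^{\ast}D^{+}\equiv\delta_{f}D^{+}$ and $(f^{-1})^{\ast}D^{-}\equiv\delta_{f}D^{-}$, and Theorem \ref{thm_canht}(ii) gives nef canonical heights $\hat h^{\pm}=\hat h_{D^{\pm},f^{\pm 1}}$ satisfying $\hat h^{\pm}=h_{D^{\pm}}+O(\sqrt{h_{X}})$ together with $\hat h^{+}\circ f=\delta_{f}\hat h^{+}$ and $\hat h^{-}\circ f=\delta_{f}^{-1}\hat h^{-}$.

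For (i), Hodge index also gives $(D^{\pm})^{2}=0$ and $(D^{+}\!\cdot\!D^{-})>0$, so $D^{+}+D^{-}$ is big and nef, and the $f^{\ast}$-stable orthogonal complement $V^{0}$ of $\langle D^{+},D^{-}\rangle$ in $N^{1}(X)_{\mathbb R}$ is negative definite with $\rho(f^{\ast}|_{V^{0}})\le 1<\delta_{f}$. For a fixed ample $H$ I would write $H=aD^{+}+bD^{-}+D^{0}$ with $a,b>0$ and $D^{0}\in V^{0}$, so that $(f^{n})^{\ast}H\equiv a\delta_{f}^{n}D^{+}+b\delta_{f}^{-n}D^{-}+(f^{n})^{\ast}D^{0}$, and divide by $\delta_{f}^{n}$. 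The $D^{+}$-part contributes $a\hat h^{+}(x)$, the $D^{-}$-part decays as $\delta_{f}^{-2n}\hat h^{-}(x)$, the $V^{0}$-part is controlled by Lemma \ref{lem4.3}(ii) together with standard Jordan-block bounds on $(f^{n})^{\ast}|_{V^{0}}$, and the $O(\sqrt{h_{X}})$-errors vanish in the limit by Lemma \ref{lem4.3}(i). This simultaneously yields $l_{f}=0$ and $\overline h_{f}\asymp\underline h_{f}\asymp\hat h^{+}$.

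For (ii), the chain (3)$\Rightarrow$(2)$\Rightarrow$(4) is immediate from Proposition \ref{prop3.2}(iv), while (2)$\Leftrightarrow$(3) is part (i). I would prove (4)$\Rightarrow$(1) contrapositively: if $\overline{O_{f}(x)}\subsetneq X$, then after replacing $f$ by an iterate each irreducible component of $\overline{O_{f}(x)}$ is $f$-invariant; each such component is either a point or a curve on which $f$ restricts to an automorphism of dynamical degree $1$, and Theorem \ref{thm_KSM} applied to $f|_{C}$ forces $\alpha_{f}(x)\le 1<\delta_{f}$. The same analysis yields the dichotomy $\alpha_{f}(x)\in\{1,\delta_{f}\}$.

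For (iii), the inclusion $\Per_{f}(\overline{\mathbb Q})\cup\bigcup_{i}C_{i}(\overline{\mathbb Q})\subset Z_{f}(\overline{\mathbb Q})$ rests on two vanishings: (a) for $f^{N}(x)=x$ the functional equation $\hat h^{+}(x)=\delta_{f}^{-N}\hat h^{+}(x)$ forces $\hat h^{+}(x)=0$; (b) for $f^{N}(C_{i})=C_{i}$ the push-forward identity $f_{\ast}^{N}D^{+}\equiv\delta_{f}^{-N}D^{+}$ combined with $f_{\ast}^{N}C_{i}=C_{i}$ gives $(D^{+}\!\cdot\!C_{i})=0$, so $\hat h^{+}|_{C_{i}}$ is bounded on $C_{i}(\overline{\mathbb Q})$. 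The reverse inclusion follows because $\underline h_{f}(x)=0$ forces $\hat h^{+}(x)=0$, hence $O_{f}(x)$ is not dense by (ii), and the discussion above places $x$ on a periodic curve or at a periodic point. The Northcott-type finiteness I expect to be the main obstacle: any $x\in\Per_{f}(\overline{\mathbb Q})$ has $\hat h^{+}(x)=\hat h^{-}(x)=0$, and using Kodaira's lemma to write $N(D^{+}+D^{-})\sim_{\mathbb R}A+E$ with $A$ ample and $E$ effective, one must check via the identity $f_{\ast}(D^{+}+D^{-})\equiv\delta_{f}^{-1}D^{+}+\delta_{f}D^{-}$ that the support of $E$ can be arranged to consist of $f$-periodic curves; on the complement of these curves one then obtains $h_{X}(x)=O(1)$, and Theorem \ref{thm_Northcott} finishes the proof. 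Identifying the augmented base locus of $D^{+}+D^{-}$ with $\bigcup_{i}C_{i}$ is precisely the step that requires the full force of Kawaguchi's surface-specific analysis.
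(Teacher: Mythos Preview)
Your argument for (i) takes a genuinely different route from the paper. The paper does not use the orthogonal decomposition $H\equiv aD^{+}+bD^{-}+D^{0}$ with $D^{0}$ in the negative-definite complement $V^{0}$; instead it invokes Lemmas \ref{lem6.3} and \ref{lem6.5} to write an \emph{ample} divisor as $A=D^{+}+D^{-}-E$ with $E$ effective and $\operatorname{Supp}E$ equal to the union of the $f$-periodic curves, and then controls $h_{E}\circ f^{n}/\delta_{f}^{n}$ via $(f^{N})^{*}E\sim E$ and Lemma \ref{lem4.3}(ii). Your linear-algebraic approach is valid for (i)---the $V^{0}$-contribution can indeed be handled by a matrix version of Lemma \ref{lem4.3}(ii)---but it discards geometric information that the paper's decomposition carries forward.

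That loss produces a genuine gap in your part (ii). Your chain $(3)\Leftrightarrow(2)\Rightarrow(4)\Rightarrow(1)$ never closes: you are missing $(1)\Rightarrow(2)$, and your assertion that ``the same analysis yields the dichotomy $\alpha_{f}(x)\in\{1,\delta_{f}\}$'' presupposes exactly this missing implication (for a dense orbit you have not ruled out $\alpha_{f}(x)=1$). With only the $V^{0}$-decomposition you cannot conclude from $\hat h^{+}(x)=0$ that $h_{X}(f^{n}(x))$ stays bounded, because $h_{D^{0}}(f^{n}(x))$ may still grow (subexponentially). The paper's mechanism is precisely the structure you postpone to (iii): since $-h_{E}$ is bounded above off $\operatorname{Supp}E=\bigcup_{i}C_{i}$ and a dense orbit meets $\operatorname{Supp}E$ only finitely often by the dynamical Mordell--Lang theorem for \'etale maps, the inequality
\[
\sqrt{h_{A}(f^{n}(x))}\bigl(\sqrt{h_{A}(f^{n}(x))}-C\bigr)\le \delta_{f}^{n}\hat h^{+}(x)+\delta_{f}^{-n}\hat h^{-}(x)-h_{E}(f^{n}(x))
\]
forces $\hat h^{+}(x)>0$. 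You already introduce $E$ and the identification of the augmented base locus with $\bigcup_{i}C_{i}$ for the Northcott step in (iii); if you move that construction forward to (i), as the paper does, the loop in (ii) closes and your reverse inclusion in (iii)---which as written also relies on $(1)\Rightarrow(2)$---goes through.
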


First, we prepare some lemmas.
The following lemma follows from the Hodge index theorem 
(cf.~{\cite[Lemma 1.2 (3)]{Kaw08}}).

\begin{lem}\label{lem6.1}
Let $X$ be a surface and $v_1,v_2 \in N^1(X)_{\mathbb R} \setminus \{0\}$ be 
nef classes which are linearly independent.
Then $v_1+v_2$ is nef and big.
\end{lem}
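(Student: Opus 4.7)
The plan is to split the conclusion into the nef part and the big part. Nefness is immediate because the nef cone in $N^1(X)_{\mathbb R}$ is a closed convex cone, so $v_1+v_2$ is nef. For bigness, I would appeal to the standard surface-theoretic fact that a nef $\mathbb R$-Cartier class $D$ on a projective surface is big if and only if $D^2>0$ (obtained from asymptotic Riemann--Roch together with Kodaira's lemma, after approximating by rational classes). Thus it is enough to show $(v_1+v_2)^2>0$.

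Expanding gives $(v_1+v_2)^2 = v_1^2 + 2\,v_1 \cdot v_2 + v_2^2$, and each term is non-negative: $v_i^2 \geq 0$ because $v_i$ is a nef $\mathbb R$-class on a surface, and $v_1 \cdot v_2 \geq 0$ because any two nef classes have non-negative intersection. So the only way the sum can fail to be positive is if all three quantities vanish simultaneously, and the main step is to prove this is impossible.

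For this I would use the Hodge index theorem. Assume for contradiction that $v_1^2 = v_2^2 = v_1 \cdot v_2 = 0$. Then for every $s,t \in \mathbb R$ we have $(sv_1+tv_2)^2 = 0$, so the two-dimensional subspace $W = \mathbb R v_1 + \mathbb R v_2 \subset N^1(X)_{\mathbb R}$ is totally isotropic for the intersection pairing. By the Hodge index theorem, that pairing has signature $(1,\rho(X)-1)$, so its Witt index equals $\min(1,\rho(X)-1)$. For $\rho(X) \geq 2$ this is $1$, which precludes a totally isotropic subspace of dimension $2$; the case $\rho(X)=1$ is already excluded by the hypothesis that $v_1,v_2$ are linearly independent. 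Either way we reach a contradiction, so $(v_1+v_2)^2>0$, as required.

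The argument is essentially a routine application of Hodge index, and I do not anticipate any serious obstacle. The only mild care needed is in invoking the equivalence ``$D$ nef with $D^2>0$ $\Leftrightarrow$ $D$ big'' for real classes on surfaces, and in recording the signature of the intersection form; both are classical.
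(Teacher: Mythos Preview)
Your proof is correct and follows exactly the route the paper indicates: the paper does not spell out an argument but simply records that the lemma ``follows from the Hodge index theorem'' and cites \cite[Lemma 1.2 (3)]{Kaw08}. Your reduction to $(v_1+v_2)^2>0$ via the nef--big criterion on surfaces, together with the observation that a Lorentzian form of signature $(1,\rho(X)-1)$ admits no two-dimensional totally isotropic subspace, is precisely the Hodge index argument the paper has in mind.
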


\begin{defn}\label{defn6.2}
Let $X$ be a surface and $v \in N^1(X)_{\mathbb R}$ a  class on $X$.
We set $Z(v)=\{ C \mid  C \mathrm{\ is\ an\ irreducible\ curve\ on\ }$X$ 
\mathrm{\ with\ } (C \cdot v)=0 \}$. 
\end{defn}

\begin{lem}[{\cite[Proposition 1.3]{Kaw08}}]\label{lem6.3}
Let $X$ be a surface and $v \in N^1(X)_{\mathbb R}$ a nef and big class on $X$.
\begin{itemize}
\item[(i)]
$Z(v)$ is a finite set. 
\item[(ii)]
There is an effective divisor $Z$ on $X$ such that 
$\Supp Z= \bigcup_{C \in Z(v)}C$ and $v-\varepsilon Z$ is ample for sufficiently small 
$\varepsilon >0$.
\end{itemize}
\end{lem}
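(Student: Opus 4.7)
The plan is to prove (i) by a short positivity argument and (ii) by combining the Hodge index theorem with $M$-matrix linear algebra, then verifying ampleness via Nakai--Moishezon.

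For (i), since $v$ is big, I will write $v \equiv A + E$ in $N^1(X)_{\mathbb R}$ with $A$ ample and $E$ an effective $\mathbb R$-divisor. If $C$ is an irreducible curve with $(v \cdot C)=0$, then $(E \cdot C) = -(A \cdot C) < 0$, which forces $C$ to be a component of $\Supp E$. Since $E$ has only finitely many components, $Z(v)$ is finite.

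For (ii), enumerate $Z(v)=\{C_1,\ldots,C_n\}$. Their classes all lie in $v^{\perp}$, and because $v^2>0$ the Hodge index theorem makes the intersection form negative definite on $v^{\perp}$. Hence the matrix $M=((C_i \cdot C_j))_{i,j}$ is symmetric negative definite; its off-diagonal entries are non-negative since distinct irreducible curves meet with non-negative multiplicity, so $-M$ is a Stieltjes matrix. Standard $M$-matrix theory then gives that $(-M)^{-1}$ is entrywise non-negative with positive diagonal, so the vector $a=(-M)^{-1}\mathbf{1}$ is strictly positive and satisfies $Ma=-\mathbf{1}$. After clearing denominators this produces positive integers $a_i$ for which $Z:=\sum a_i C_i$ is an effective divisor with $\Supp Z=\bigcup_i C_i$ and $(Z \cdot C_i)<0$ for every $i$.

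Finally I verify via Nakai--Moishezon that $v-\varepsilon Z$ is ample for small $\varepsilon>0$. The self-intersection is $(v-\varepsilon Z)^2=v^2+\varepsilon^2 Z^2$ by the orthogonality $(v\cdot Z)=0$, and this stays positive once $\varepsilon$ is small since $v^2>0$. For $C=C_i$ the intersection $((v-\varepsilon Z)\cdot C_i) = -\varepsilon (Z\cdot C_i) > 0$ by construction. The main obstacle is the remaining case of curves $C \notin Z(v)$: one has $(v \cdot C)>0$ and $(Z\cdot C)\geq 0$, but both sides depend on $C$ and a single $\varepsilon$ must work uniformly, whereas neither quantity is a priori controlled. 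I will handle this by returning to the decomposition $v \equiv A+E$ and fixing $N>0$ large enough that $NA-Z$ is nef, so that $(Z \cdot C) \leq N(A \cdot C)$ for every curve $C$. Then for any $C\not\subset\Supp E$ one gets $((v-\varepsilon Z)\cdot C)\geq (1-\varepsilon N)(A \cdot C) > 0$ as soon as $\varepsilon<1/N$, while the finitely many components of $E$ lying outside $Z(v)$ are dealt with individually by further shrinking $\varepsilon$.
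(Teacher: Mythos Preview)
The paper does not prove this lemma; it is quoted from Kawaguchi \cite[Proposition~1.3]{Kaw08} without argument, so there is no in-paper proof to compare against. Your proposal supplies a correct, self-contained argument.

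A few minor remarks. In (ii) you assert that $M$ is negative \emph{definite}; this requires the classes $[C_i]$ to be linearly independent in $v^{\perp}$, which you do not explicitly check. It follows from the standard Zariski-type argument: split a hypothetical relation $\sum\lambda_i[C_i]=0$ into its positive and negative parts $D^\pm$, note $(D^+)^2=(D^+\cdot D^-)\geq 0$ while Hodge index on $v^\perp$ gives $(D^+)^2\leq 0$, hence $D^+\equiv 0$, and then intersect with an ample class. Your $M$-matrix step is then valid, and since $M$ has integer entries the solution $a$ is rational, so clearing denominators is legitimate. Your uniformity fix for Nakai--Moishezon, bounding $(Z\cdot C)\leq N(A\cdot C)$ via nefness of $NA-Z$ and treating the finitely many components of $E$ outside $Z(v)$ separately, is exactly right. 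Finally, since $v$ is only an $\mathbb R$-class, you are implicitly using the Campana--Peternell extension of Nakai--Moishezon to $\mathbb R$-divisors, which is fine on a surface.
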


\begin{lem}\label{lem6.4}
Let $X$ be a surface and $f$ an automorphism on $X$.
Then $\delta_{f^{-1}}=\delta_f$.
\end{lem}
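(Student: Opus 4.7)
The plan is to exploit the fact that on a surface, pullback by an automorphism preserves the intersection pairing, and then to rewrite the intersection numbers defining $\delta_{f^{-1}}$ in terms of those defining $\delta_f$.

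First I would recall that for any automorphism $g$ of a smooth projective variety and any divisors $D,E$, one has $g_{*}g^{*}=\mathrm{id}$, so the projection formula yields $(g^{*}D\cdot g^{*}E)=(D\cdot g_{*}g^{*}E)=(D\cdot E)$. In particular, $g^{*}$ is an isometry of the intersection form on $N^{1}(X)_{\mathbb R}$.

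Next, fix an ample divisor $H$ on $X$. Applying this isometry property to $g=f^{-n}$ with $D=H$ and $E=(f^{n})^{*}H$, and using $(f^{-n})^{*}(f^{n})^{*}=\mathrm{id}$, I would obtain
\begin{equation*}
((f^{-n})^{*}H\cdot H)\;=\;((f^{-n})^{*}H\cdot (f^{-n})^{*}(f^{n})^{*}H)\;=\;(H\cdot (f^{n})^{*}H)\;=\;((f^{n})^{*}H\cdot H).
\end{equation*}
Taking $n$th roots and passing to the limit then gives
\begin{equation*}
\delta_{f^{-1}}\;=\;\lim_{n\to\infty}((f^{-n})^{*}H\cdot H)^{1/n}\;=\;\lim_{n\to\infty}((f^{n})^{*}H\cdot H)^{1/n}\;=\;\delta_{f},
\end{equation*}
which is the desired equality.

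There is no serious obstacle here; the only subtlety is making sure that the definition of $\delta_{f}$ used in the paper, namely $\delta_{f}=\lim_{n}((f^{n})^{*}H\cdot H^{\dim X-1})^{1/n}=\lim_{n}((f^{n})^{*}H\cdot H)^{1/n}$ in the surface case, is matched on both sides so that the identity of intersection numbers translates directly into an identity of dynamical degrees.
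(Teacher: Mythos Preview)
Your proof is correct and follows essentially the same route as the paper. The paper's argument is the one-line computation
\[
\delta_{f^{-1}}=\lim_{n}((f^{-n})^{*}H\cdot H)^{1/n}=\lim_{n}((f^{n})_{*}H\cdot H)^{1/n}=\lim_{n}(H\cdot (f^{n})^{*}H)^{1/n}=\delta_{f},
\]
which uses $(f^{-n})^{*}=(f^{n})_{*}$ together with the projection formula, whereas you first package the projection formula as the statement that $g^{*}$ is an isometry and then apply it with $g=f^{-n}$; the underlying identity $((f^{-n})^{*}H\cdot H)=((f^{n})^{*}H\cdot H)$ and the passage to the limit are the same in both.
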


\begin{proof}
Take an ample divisor $H$ on $X$.
Then 
$\delta_{f^{-1}}=\lim_{n \to \infty} ((f^{-n})^*H \cdot H)^{1/n}
=\lim_{n \to \infty} ((f^n)_*H \cdot H)^{1/n}
=\lim_{n \to \infty} (H \cdot (f^n)^*H)^{1/n}
=\delta_f$.
\end{proof}

\begin{lem}\label{lem6.5}
Let $X$ be a surface, $f$ an automorphism on $X$ with $\delta_f>1$,
and $D^+,D^-$ be nef $\mathbb R$-divisors such that $D^+,D^- \not\equiv 0$ and 
$f^*D^+ \equiv \delta_f D^+$, $(f^{-1})^*D^- \equiv \delta_f D^-$.
Set $D=D^++D^-$.
Then $D$ is nef and big and,
for any irreducible curve $C$ on $X$,
$C \in Z(D)$ if and only if $C$ is $f$-periodic.
\end{lem}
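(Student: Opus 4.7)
The plan is to treat the two claims (nef-and-bigness of $D$, and the curve characterization of $Z(D)$) separately, with Lemma \ref{lem6.1} and Lemma \ref{lem6.3} (i) doing the real work.

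First I would show $D$ is nef and big. Nefness is immediate since $D^+,D^-$ are both nef. By Lemma \ref{lem6.1} it remains to verify that $D^+$ and $D^-$ are linearly independent in $N^1(X)_{\mathbb R}$. Suppose for contradiction $D^+ \equiv cD^-$ for some $c \in \mathbb R$; then $c \neq 0$ since $D^+ \not\equiv 0$. On one hand $f^*D^+ \equiv \delta_f D^+ \equiv c\delta_f D^-$. On the other hand, applying $f^*$ to $(f^{-1})^*D^- \equiv \delta_f D^-$ gives $f^*D^- \equiv \delta_f^{-1} D^-$, hence $f^*D^+ \equiv c f^*D^- \equiv c\delta_f^{-1}D^-$. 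Comparing, $\delta_f D^- \equiv \delta_f^{-1}D^-$, which forces $\delta_f^2 = 1$, contradicting $\delta_f > 1$. So $D^+,D^-$ are independent nef classes, and Lemma \ref{lem6.1} gives that $D$ is nef and big.

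Next I would handle the characterization of $Z(D)$. Since $D^+$ and $D^-$ are both nef, for any irreducible curve $C$ we have $(C \cdot D^\pm) \geq 0$, so $(C \cdot D) = 0$ if and only if $(C \cdot D^+) = (C \cdot D^-) = 0$. For the \emph{only if} direction, assume $C$ is $f$-periodic, say $f^N(C) = C$. Because $f^N$ is an automorphism that preserves $C$ set-theoretically, the push-forward of cycles satisfies $(f^N)_* C = C$; combined with the projection formula,
\[
(C \cdot D^+) = ((f^N)_*C \cdot D^+) = (C \cdot (f^N)^*D^+) = \delta_f^N(C \cdot D^+),
\]
forcing $(C \cdot D^+) = 0$ since $\delta_f^N \neq 1$, and analogously $(C \cdot D^-) = 0$.

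For the \emph{if} direction, suppose $C \in Z(D)$, i.e.\ $(C \cdot D^+) = (C \cdot D^-) = 0$. For every $n \in \mathbb Z$, the curve $f^n(C)$ is irreducible (since $f$ is an automorphism), and by the projection formula together with $(f^n)^*D^\pm \equiv \delta_f^{\pm n} D^\pm$,
\[
(f^n(C) \cdot D^\pm) = (C \cdot (f^n)^*D^\pm) = \delta_f^{\pm n}(C \cdot D^\pm) = 0.
\]
Hence $f^n(C) \in Z(D)$ for all $n \in \mathbb Z$. But $Z(D)$ is a \emph{finite} set by Lemma \ref{lem6.3} (i), since $D$ is nef and big. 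By the pigeonhole principle, $f^m(C) = f^n(C)$ for some $m < n$, and applying $f^{-m}$ gives $f^{n-m}(C) = C$, so $C$ is $f$-periodic.

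The only conceptual obstacle is the linear-independence step for bigness; the curve characterization is then a clean two-way argument using the eigenvalue property of $f^*$ on $D^\pm$ together with the finiteness in Lemma \ref{lem6.3} (i).
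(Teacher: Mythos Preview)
Your argument is correct and follows essentially the same route as the paper: linear independence of $D^+,D^-$ (the paper phrases this as ``eigenvectors with different eigenvalues'') plus Lemma~\ref{lem6.1} for bigness, then the projection formula and finiteness of $Z(D)$ from Lemma~\ref{lem6.3}~(i) for the curve characterization. One cosmetic slip: you have the ``if'' and ``only if'' labels swapped relative to the statement ``$C \in Z(D)$ if and only if $C$ is $f$-periodic''.
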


\begin{proof}
$D^+,D^-$ are linearly independent in $N^1(X)_{\mathbb R}$ since they are eigenvectors 
with different eigenvalues.
So $D$ is nef and big by Lemma \ref{lem6.1}.

Let $C$ be any irreducible curve on $X$.
Note that $(C \cdot D)=0$ if and only if $(C \cdot D^+)=(C \cdot D^-)=0$.
Assume that $C \in Z(D)$.
Then $(D^+ \cdot f(C))=(D^+ \cdot f_*C)=(f^*D^+ \cdot C)=\delta_f(D^+ \cdot C)=0$.
Similarly $(D^- \cdot f(C))=0$, so $f(C) \in Z(D)$.
Lemma \ref{lem6.3} implies that $Z(D)$ is finite.
Since $C,f(C),f^2(C),\ldots \in Z(D)$, 
it follows that $f^k(C)=f^l(C)$ for some $k<l$.
Then $C=f^{l-k}(C)$ since $f$ is an automorphism.
Thus $C$ is $f$-periodic.

Conversely, assume that $C$ is $f$-periodic.
Then $f^N(C)=C$ for some $N \in \mathbb Z_{>0}$.
So $(D^+ \cdot C)=(D^+ \cdot (f^N)_*C)=((f^N)^* D^+ \cdot C)=\delta_f^N(D^+ \cdot C)$.
Since $\delta_f^N>1$, $(D^+ \cdot C)$ must be zero.
Similarly $(D^- \cdot C)=0$, so $C \in Z(D)$.
\end{proof}

\begin{proof}[Proof of Theorem \ref{thm6.6}]
(i) Take  nef $\mathbb R$-divisors $D^+,D^-$ on $X$ such that 
$D^+,D^- \not\equiv 0$, $f^*D^+ \equiv \delta_f D^+$ and 
$(f^{-1})^*D^- \equiv \delta_f D^-$.
Set $D=D^++D^-$.
Lemma \ref{lem6.5} implies that  $\mathcal C=Z(D)$. 
By Lemma \ref{lem6.3},  
$\mathcal C$ is a finite set and we can take $a_i>0$ for each $C_i \in \mathcal C$ 
such that, setting $E=\sum_i a_i C_i$, $A=D-E$ is ample.
Set $\hat h^+=\hat h_{D^+,f}$, $\hat h^-=\hat h_{D^-,f^{-1}}$.
Take a height $h_A$ associated to $A$ as satisfying $h_A \geq 1$.
Then $h_A=h_D-h_E+O(1)=\hat h^+ +\hat h^- -h_E+O(\sqrt{h_A})$.
Set $\phi=h_A-\hat h^+ -\hat h^- +h_E$.
We have 
\begin{align*}
\frac{h_A \circ f^n}{\delta_f^n}
&= \frac{\hat h^+ \circ f^n}{\delta_f^n} +\frac{\hat h^- \circ f^n}{\delta_f^n}
-\frac{h_E \circ f^n}{\delta_f^n} +\frac{\phi \circ f^n}{\delta_f^n} \\
&= \hat h^+ +\frac{\hat h^-}{\delta_f^{2n}}
-\frac{h_E \circ f^n}{\delta_f^n} +\frac{\phi \circ f^n}{\delta_f^n}.
\end{align*}
Lemma \ref{lem4.3} (i) implies that $\lim_n \delta_f^{-n} \phi(f^n(x))=0$ for every $x$.
Since every irreducible component of $E$ is $f$-periodic, $(f^N)^*E \sim E$ for 
some $N \in \mathbb Z_{>0}$.
So, applying Lemma \ref{lem4.3} (ii),
$\lim_n \delta_f^{-Nn} h_E(f^{Nn}(x))=0$ 
for every $x$.
This implies that $\lim_n \delta_f^{-n} h_E(f^{n}(x))=0$ for every $x$.
Therefore $\lim_n \delta_f^{-n} h_A \circ f^n=\hat h^+$.
So $l_f=0$ and $\overline h_f, \underline h_f \asymp \hat h^+$.

(ii) Assume (1).
Take $C>0$ such that $h_A \leq \hat h^+ + \hat h^- -h_E +C\sqrt{h_A}$.
Then 
\begin{align*}
\sqrt{h_A(f^n(x))}(\sqrt{h_A(f^n(x))}-C) 
&\leq \hat h^+(f^n(x))+\hat h^-(f^n(x))-h_E(f^n(x)) \\
&\leq \delta_f^n \hat h^+(x) + \delta_f^{-n} \hat h^-(x) -h_E(f^n(x)).
\end{align*}
So $\{ \delta_f^n \hat h^+(x) + \delta_f^{-n} \hat h^-(x) -h_E(f^n(x)) \}_{n=0}^\infty$ is 
upper unbounded by the Northcott finiteness theorem.
Since $O_f(x)$ is dense, the set $O_f(x) \cap \Supp E$ is finite due to the dynamical 
Mordell--Lang theorem for \'etale endomorphisms (cf.~\cite[Corollary 1.4]{BGT10}).
Moreover, $-h_E$ is upper bounded on $X \setminus \Supp E$.
Therefore $\{ -h_E(f^n(x)) \}_{n=0}^\infty$ is upper bounded.
Then $\hat h^+(x)$ must be positive.
So $\overline h_f(x)>0$ since $\overline h_f \asymp \hat h^+$.

(2) is equivalent to (3) because $\overline h_f \asymp \underline h_f$.

(3) implies (4) by Proposition \ref{prop3.2} (iv).

Finally, assume that $O_f(x)$ is not dense and we show that $\alpha_f(x)=1$.
Let $Z$ be the Zariski closure of $O_f(x)$.
If $\dim Z=0$, then $x$ is $f$-preperiodic and so $\alpha_f(x)=1$.
Assume that $\dim Z=1$.
We have $f(Z)=Z$ since $f(Z) \subset Z$ and $f$ is an automorphism.
So $f|_Z$ is an automorphism on $Z$.
Replacing $f$ by a power, we may assume that $f(Z_i)=Z_i$ for every irreducible component 
$Z_i$ of $Z$. So we may assume that $Z$ is irreducible.
Take the normalization $C$ of $Z$ and let $\nu: C \to X$ be the induced morphism.
Then $f|_Z$ induces an automorphism $g$ on $C$ such that $\nu \circ g=f \circ \nu$.
Since $\nu$ is finite, $h_C \asymp h_X \circ \nu$.
So, taking $x_0 \in \nu^{-1}(x)$, we have 
$\alpha_f(x)=\lim_{n} h_X(f^n(x))^{1/n}=\lim_{n}h_X(\nu g^n(x_0))^{1/n}
=\lim_{n} h_C(g^n(x_0))^{1/n}=\alpha_g(x_0) \leq \delta_g=1$,
where $\delta_g=1$ because $g$ is an automorphism on a curve.
Therefore $\alpha_f(x)=1$.

(iii) 
By Proposition \ref{prop3.2} (ii), $\Per_f(\overline{\mathbb Q})
=\Preper_f(\overline{\mathbb Q}) \subset Z_f(\overline{\mathbb Q})$.
For any $x \in \bigcup_i C_i(\overline{\mathbb Q})$, 
we have $O_f(x) \subset \bigcup_i C_i(\overline{\mathbb Q})$ and so it is not dense.  
Then $\underline h_f(x)=0$ by (ii).
Thus $\bigcup_i C_i(\overline{\mathbb Q}) \subset Z_f(\overline{\mathbb Q})$.
Conversely, take any $x \in Z_f(\overline{\mathbb Q})$.
Then $O_f(x)$ is not dense by (ii).
Let $W$ be the closure of $O_f(x)$.
Then $\dim W \leq 1$ and $f(W) \subset W$.
So each irreducible component of $W$ is an $f$-periodic curve or an $f$-periodic point,
which implies that $x \in W(\overline{\mathbb Q}) \subset 
\Per_f(\overline{\mathbb Q}) \cup \bigcup_i C_i(\overline{\mathbb Q})$.
Thus $Z_f(\overline{\mathbb Q})
=\Per_f(\overline{\mathbb Q}) \cup \bigcup_i C_i(\overline{\mathbb Q})$.

Take $M>0$ such that $-h_E \leq M$ on 
$X \setminus \Supp E=X \setminus \bigcup_i C_i$.
Take any 
$x \in \Per_f(\overline{\mathbb Q}) \setminus \bigcup_i C_i(\overline{\mathbb Q})$.
Then $x$ is also $f^{-1}$-periodic since $f$ is an automorphism.
Moreover, we have 
$O_f(x) \cap \left( \bigcup_i C_i(\overline{\mathbb Q}) \right) =\varnothing$ 
since  $x \not\in \bigcup_i C_i$ and 
$\{ C_i \}_i$ are the whole of $f^{-1}$-periodic curves.
So the inequality 
$\sqrt{h_A}(\sqrt{h_A}-C) \leq \hat h^+ + \hat h^- -h_E$ 
implies that 
$\sqrt{h_A(x)}(\sqrt{h_A(x)}-C) \leq M$ for 
$x \in \Per_f(\overline{\mathbb Q}) \setminus \bigcup_i C_i(\overline{\mathbb Q})$.
Then the Northcott finiteness theorem shows that 
$\Per_f(K) \setminus \bigcup_i C_i(K)$ 
is finite for any number field $K$.
\end{proof}


\section{Non-trivial endomorphisms on surfaces}\label{sec7}

The aim in this section is to prove the following.

\begin{thm}\label{thm7.7}
Let $X$ be a surface and $f$ a non-trivial endomorphism on $X$ with $\delta_f>1$.
Then Conjecture \ref{conj_zf} holds for $f$.
Moreover, if $X$ is not birational to an abelian surface, then $l_f=0$.
\end{thm}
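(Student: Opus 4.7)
The plan is to reduce to the minimal model of $X$ and then invoke the classification of minimal surfaces admitting non-trivial surjective endomorphisms, following the strategy outlined in the introduction. By Lemma \ref{lem3.5} I may replace $f$ with any positive iterate; after such a replacement every exceptional $(-1)$-curve contracted to a minimal model can be assumed $f$-invariant, so $f$ descends to a non-trivial endomorphism $f'$ on a minimal surface $X'$ via an $f$-equivariant birational morphism $\mu\colon X\to X'$ with $\delta_{f'}=\delta_f$. Lemma \ref{lem3.8} then transfers Conjecture \ref{conj_zf} from $(X',f')$ to $(X,f)$ and preserves the value of $l_f$, so it suffices to treat minimal $X$.

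For minimal $X$ the classification splits into three branches. If $X\cong\mathbb{P}^2$ then $\rho(X)=1$, and Theorem \ref{thm4.1} immediately gives both Conjecture \ref{conj_zf} and $l_f=0$. If $X$ is an abelian surface, Theorem \ref{thm5.1} gives Conjecture \ref{conj_zf} (this is the single branch in which $l_f>0$ can occur, consistent with Example \ref{ex1.1}). In every remaining case, the main structural result of \cite{MSS17} produces, after replacing $f$ by a further iterate, an $f$-equivariant fibration $\pi\colon X\to C$ onto a smooth curve together with an induced endomorphism $g$ on $C$ satisfying $\pi\circ f=g\circ\pi$.

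When $\delta_g>1$, necessarily $C\cong\mathbb{P}^1$ and $g$ is polarized, so Theorem \ref{thm4.1} provides a $g$-invariant proper closed $W\subset C$ with $Z_g(K)\subset W(K)$, and Lemma \ref{lem3.8}(ii) then yields $Z_f(K)\subset\pi^{-1}(W)$, supplying the proper $f$-invariant closed subset required by Conjecture \ref{conj_zf}. The conclusion $l_f=0$ in this subcase follows because the base contributes $l_g=0$ (polarized endomorphism on a curve), while the vertical dynamics on an $f$-periodic fiber is again polarized on $\mathbb{P}^1$ or on an elliptic curve and hence has $l=0$ by Theorem \ref{thm4.1}.

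The main obstacle is the subcase $\delta_g\le 1$, where the base no longer witnesses the dominant dynamics of $f$. Here I would use the finer information from \cite{MSS17}: the general fiber of $\pi$ is $\mathbb{P}^1$ or an elliptic curve, and the restriction of a suitable iterate of $f$ to an $f$-periodic fiber is a polarized endomorphism of dynamical degree $\delta_f$. Applying Theorem \ref{thm4.1} fiberwise and combining with Theorem \ref{thm_Northcott} (or Corollary \ref{cor_Northcott}) along $C$ yields a proper $f$-invariant closed subset containing $Z_f(K)$. The same fiberwise polarized structure should force $l_f=0$ outside the abelian surface case, since the only mechanism producing a Jordan block of $f^{*}$ on $N^{1}(X)_{\mathbb R}$ with the required growth is a translation part in abelian fibers over an abelian base, and this is precisely excluded by the hypothesis that $X$ is not birational to an abelian surface.
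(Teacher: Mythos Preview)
Your reduction to the minimal model cites the wrong lemma: Lemma~\ref{lem3.8}(i) gives only $(\delta_f,l_f)\ge(\delta_{f'},l_{f'})$ for a surjective $\mu$, and part~(ii) requires this equality as a \emph{hypothesis}, so neither proves $l_f=l_{f'}$ under a blow-down. The paper establishes this separately (Lemma~\ref{lem7.2}), using that the $f$-invariant exceptional curve satisfies $f^*E\equiv dE$ with $0<d\le\delta_f$, so the $h_E$-contribution vanishes after dividing by $\delta_f^n$. Similarly, in your $\delta_g>1$ branch the argument for $l_f=0$ is circular: you need $(\delta_f,l_f)=(\delta_g,l_g)$ before invoking Lemma~\ref{lem3.8}(ii), and ``base and fiber each have $l=0$'' is not a proof that $l_f=0$. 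The paper instead observes that every type~(II) surface (Hirzebruch, $\mathbb{P}^1$-bundle over an elliptic curve, bielliptic) has $\rho(X)\le 2$, so Theorem~\ref{thm4.2}(i) gives $l_f=0$ directly; then $l_g\le l_f=0$ by Lemma~\ref{lem3.8}(i), and Lemma~\ref{lem7.6} applies. (Also, $C$ need not be $\mathbb{P}^1$; it can be an elliptic curve, though this is harmless since $\rho(C)=1$.)

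The substantive gap is your case $\delta_g\le1$. Fiberwise finiteness of preperiodic points does not assemble into a proper $f$-invariant closed subset of $X$: a union of finite sets indexed by $c\in C(K)$ can be Zariski dense, and Northcott along $C$ says nothing when $\delta_g=1$. For properly elliptic surfaces the base can be $\mathbb{P}^1$ or an elliptic curve, so $C(K)$ is typically infinite and your sketch breaks down. The paper's argument is completely different and does not go through the fibers at all: the type~(I) surfaces are $\mathbb{P}^1$-bundles over curves of genus $\ge2$ or properly elliptic surfaces (which by \cite{Fuj02} admit an \'etale cover by $E\times C'$ with $g(C')\ge2$), and in either case $X$ is not potentially dense, so $\overline{X(K)}$ is already a proper $f$-invariant closed subset and Conjecture~\ref{conj_zf} holds trivially. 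For $l_f=0$ in type~(I) the paper uses Lemma~\ref{lem7.5}, which bounds $h_X(f^n(x))$ by restricting to the normalized fiber containing the entire orbit. You also omit operation~(a) of Theorem~\ref{thm7.4} (passing to an \'etale cover), which is genuinely needed in the classification and is justified via Lemma~\ref{lem3.8.1} and Theorem~\ref{thm_CW}.
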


To prove it, we will give some lemmas.

\begin{lem}[{\cite[Lemma 3.3]{MSS17}}]\label{lem7.1}
Let $X,Y$ be smooth projective varieties, $\mu: X \dashrightarrow Y$ a birational map,
and $U \subset X$ an open subset of $X$ such that 
$\mu|_U: U \to \mu(U)$ is an isomorphism.
Then $h_X|_U \asymp (h_Y \circ \mu)|_U$.
\end{lem}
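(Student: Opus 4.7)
The plan is to reduce the claim to a one-sided height bound and prove it via a Kodaira-type decomposition on a resolution of $\mu$.

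I first observe the statement is symmetric. Since $\mu|_U\colon U\to\mu(U)$ is an isomorphism, $\mu^{-1}\colon Y\dashrightarrow X$ is a rational map defined on $\mu(U)$ restricting to an isomorphism $\mu(U)\to U$. Hence it suffices to prove the one-sided estimate: for any birational map $\phi\colon X\dashrightarrow Y$ whose domain contains an open set $V$, $h_Y\circ\phi\prec h_X$ on $V$. Applying this to $\phi=\mu$ gives one direction on $U$, and applying it to $\phi=\mu^{-1}$ and substituting $y=\mu(x)$ (using $\mu^{-1}\circ\mu=\mathrm{id}$ on $U$) gives the reverse.

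To prove the one-sided estimate, fix ample divisors $H_X$ on $X$ and $H_Y$ on $Y$, and resolve the indeterminacy of $\phi$ via Hironaka: take a birational morphism $\pi\colon W\to X$ from a smooth projective $W$, obtained as a composition of blow-ups with smooth centers inside the indeterminacy locus of $\phi$, so that $\pi$ is an isomorphism over $V$ and $\psi:=\phi\circ\pi$ extends to a morphism $W\to Y$. For such an iterated smooth blow-up $\pi$ there is a classical construction of an effective $\pi$-exceptional divisor $G$ with $-G$ relatively ample over $X$; hence $A:=\pi^\ast H_X-\varepsilon G$ is ample on $W$ for small $\varepsilon>0$. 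Setting $F:=\varepsilon G$, this gives a Kodaira-type decomposition $\pi^\ast H_X=A+F$ with $A$ ample and $F\geq 0$ effective $\pi$-exceptional, and in particular $\Supp(F)\cap\pi^{-1}(V)=\varnothing$.

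Lemma \ref{lem2.1} applied on $W$ yields $h_{\psi^\ast H_Y}\prec h_W$. On $\pi^{-1}(V)$, the height of the effective divisor $F$ is bounded below off its support, so $h_{\pi^\ast H_X}=h_A+h_F+O(1)$ differs from the ample height $h_A\asymp h_W$ by a uniformly bounded-below quantity; i.e.\ $h_W\prec h_{\pi^\ast H_X}$ on $\pi^{-1}(V)$. Combining, $h_{\psi^\ast H_Y}\prec h_{\pi^\ast H_X}$ on $\pi^{-1}(V)$. Using height functoriality $h_{\psi^\ast H_Y}=h_Y\circ\psi+O(1)$ and $h_{\pi^\ast H_X}=h_X\circ\pi+O(1)$ on $W$, together with the fact that $\pi\colon\pi^{-1}(V)\to V$ is an isomorphism with $\psi=\phi\circ\pi$ there, this descends to $h_Y\circ\phi\prec h_X$ on $V$. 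The main technical hurdle is arranging $F$ to be exceptional and hence supported away from $\pi^{-1}(V)$, which requires choosing the resolution to admit a $\pi$-ample exceptional divisor; this step is simultaneously what keeps $F$ out of $\pi^{-1}(V)$ and what upgrades the one-sided bound in Lemma \ref{lem2.1} to the equivalence $h_{\pi^\ast H_X}\asymp h_W$ on $\pi^{-1}(V)$ used in the argument.
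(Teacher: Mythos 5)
Your argument is correct. Note that the paper offers no proof of this lemma at all---it is quoted from \cite[Lemma 3.3]{MSS17}---and your route (resolve the indeterminacy by $\pi\colon W\to X$ isomorphic over $V$, use an effective $\pi$-exceptional $G$ with $-G$ relatively ample to write $\pi^\ast H_X$ as ample plus effective supported off $\pi^{-1}(V)$, then apply Lemma \ref{lem2.1} on $W$) is exactly the standard proof of the cited statement; the only point worth making explicit is that the additive $O(1)$ errors are absorbed into the multiplicative constants of $\prec$ via the normalization $h\geq 1$ used throughout the paper.
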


\begin{lem}\label{lem7.2}
Let $X$ be a surface, $E$ a $(-1)$-curve on $X$, $\mu: X \to Y$ the contraction of $E$,
$f$ an endomorphism on $X$ with $f(E)=E$, and $g$ an endomorphism on $Y$ such that 
$\mu \circ f=g \circ \mu$.
\begin{itemize}
\item[(i)]
$(\delta_f,l_f)=(\delta_g,l_g)$.
\item[(ii)]
Let $K \subset \overline{\mathbb Q}$ be any subfield where 
all concerned are defined.
Then 
$Z_f(K) \subset \mu^{-1}(Z_g(K))$.
\end{itemize}
\end{lem}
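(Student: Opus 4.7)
Part (ii) is immediate from part (i) together with Lemma \ref{lem3.8} (ii) applied to the surjective morphism $\mu$: once $(\delta_f,l_f)=(\delta_g,l_g)$ is known, the inequality $\underline h_g\circ\mu\prec\underline h_f$ gives $Z_f(K)\subset\mu^{-1}(Z_g(K))$. So the content is part (i), and since Lemma \ref{lem3.8} (i) already provides $(\delta_f,l_f)\geq(\delta_g,l_g)$, my plan focuses on the reverse inequality.

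The plan for the reverse inequality is to split each forward orbit $\{f^n(x)\}_{n\geq 0}$ according to whether it meets $E$. On the open set $X\setminus E$, where $\mu$ restricts to an isomorphism onto $Y\setminus\{p\}$ with $p:=\mu(E)$, Lemma \ref{lem7.1} gives $h_X\asymp h_Y\circ\mu$, so if the orbit avoids $E$ entirely the growth bound for $g^n(\mu(x))$ translates directly into the same bound for $f^n(x)$. Since $f(E)=E$, the only alternative is that the orbit enters $E$ at some smallest step $n_0$ and is trapped in $E$ afterwards, in which case the trapped tail is governed by $f|_E:E\to E$, a degree-$d$ endomorphism of $E\cong\mathbb P^1$ with $d:=\deg(f|_E)$; heights along $(f|_E)^k$ grow like $d^k$ (with at most a polynomial factor when $d=1$). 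The problem thus reduces to the bound $d\leq\delta_g$.

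Establishing $d\leq\delta_g$ is the main obstacle. My plan is to identify $d$ with a local multiplicity of $g$ at the fixed point $p$. In local coordinates $(y,z)$ at $p$, write $g=(g_1,g_2)$; the requirement that the lift $f$ be a morphism with $f(E)=E$ forces $g_1$ and $g_2$ to vanish at $p$ to a common order $m$, and a jet computation in the blowup charts shows that $f|_E$ is the projectivization $[y:z]\mapsto[G_1(y,z):G_2(y,z)]$ of the degree-$m$ initial homogeneous parts $(G_1,G_2)$, so $d\leq m$. On the other hand, the scheme-theoretic local length of the fiber $g^{-1}(p)$ at $p$ is at least $m^2$ (the intersection number of the two curves $G_i=0$ at the origin), hence the topological degree $\deg_{\mathrm{top}}(g)$ of $g$ is at least $m^2$. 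By the log-concavity inequality $\deg_{\mathrm{top}}(g)\leq\delta_g^2$ for surface endomorphisms, $m^2\leq\delta_g^2$ and therefore $d\leq m\leq\delta_g$.

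With $d\leq\delta_g$ in hand, the trapped-orbit heights are controlled by $C\delta_g^n n^{l_g}$, matching the non-trapped bound, so $\overline h_{f,\delta_g,l_g}(x)<\infty$ for every $x\in X(\overline{\mathbb Q})$, and Theorem \ref{thm3.12} yields $(\delta_f,l_f)\leq(\delta_g,l_g)$, completing part (i). The degenerate borderline case $d=\delta_g=1$, in which $f|_E$ is an infinite-order automorphism of $\mathbb P^1$ contributing a polynomial factor to the trapped orbit, is handled by verifying that the same infinite-order behavior forces $g$ to have polynomial orbits of at least the same degree on $Y$, so that the trapped polynomial growth is absorbed into $n^{l_g}$.
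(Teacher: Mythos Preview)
Your deduction of (ii) from (i) via Lemma~\ref{lem3.8} (ii) is correct and in fact a bit slicker than the paper, which argues (ii) directly by splitting on whether $O_f(x)$ meets $E$: if it does, $\mu(x)$ is $g$-preperiodic; if not, Lemma~\ref{lem7.1} gives $h_X\asymp h_Y\circ\mu$ along the orbit.

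For (i), your route is valid in outline but considerably more involved than the paper's. The paper avoids all local analysis: since $f_*\circ f^*=\deg(f)\,\id$ on $N^1(X)_{\mathbb R}$, the map $f_*$ is injective there, and any irreducible curve $C$ with $f(C)=E$ has $f_*[C]$ proportional to $f_*[E]$, hence $C\equiv aE$. Thus $f^*E\equiv dE$ numerically for some $d>0$, and as an eigenvalue of $f^*$ one gets $d\le\rho(f^*)=\delta_f=\delta_g$ immediately---no local coordinates, no topological degree, no log-concavity. (Pairing with $E$ via the projection formula identifies this $d$ with your $\deg(f|_E)$.) The height comparison is then done globally by choosing $H_X=\mu^*H_Y-bE$ ample for suitable $b>0$ and decomposing
\[
\frac{h_{H_X}\circ f^n}{\delta_f^n n^{l}}=\frac{h_{H_Y}\circ g^n\circ\mu}{\delta_g^n n^{l}}-b\,\frac{h_E\circ f^n}{\delta_f^n n^{l}},
\]
with the second term on the right bounded by Lemma~\ref{lem4.3} (ii); this yields $l_f=l_g$ with no orbit-splitting. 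Your approach via local jets, intersection multiplicity, and the inequality $\deg_{\mathrm{top}}(g)\le\delta_g^2$ reaches the same bound $d\le\delta_g$ but through three nontrivial steps where the paper needs one line. Your claim that $g_1,g_2$ vanish to a \emph{common} order does hold, but its justification requires the surjectivity of $f|_E$ onto $E$ (otherwise one blowup chart shows $f|_E$ collapsing $E$ to a point), which you do not spell out. Your borderline case $d=\delta_g=1$ is handled only by a vague sentence; note, however, that the paper's appeal to Lemma~\ref{lem4.3} also tacitly assumes $\delta_f>1$, so both arguments are really written for that case---which is the only one needed in Theorem~\ref{thm7.7}.
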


\begin{proof}
(i) It follows from the product formula (cf.~\cite{Tru15}) that $\delta_f=\delta_g$.
If $C$ is an irreducible curve on $X$ such that $f(C)=E=f(E)$,
then $f_*C=af_*E$ for some $a>0$.
Now we have the equation $f_* \circ f^*=\deg(f) \id_{N^1(X)_{\mathbb R}}$.
This implies that $f^*$ and $f_*$ are automorphisms on $N^1(X)_{\mathbb R}$.
So $C \equiv aE$ since $f_*$ is injective.
Hence $f^*E \equiv dE$ for some $0< d \leq \delta_f$.
Take an ample divisor $H_Y$ on $Y$.
Then $\mu^*H_Y$ is nef and big, and $H_X=\mu^*H_Y -bE$ is ample 
for some $b>0$.
Take any non-negative integer $l$.
Then 
$$\frac{h_{H_X} \circ f^n}{\delta_f^n n^l}
= \frac{h_{H_Y} \circ \mu \circ f^n-b h_E \circ f^n}{\delta_f^n n^l}
=\frac{h_{H_Y} \circ g^n \circ \mu}{\delta_g^n n^l}-b \frac{h_E \circ f^n}{\delta_f^n n^l}.$$
By Lemma \ref{lem4.3} (ii),
$\limsup_n \delta_f^{-n} |h_E(f^n(x))| < \infty$ for every $x$.
So $l_f=l_g$.

(ii) Take any $x \in Z_f(K)$.
If $O_f(x) \cap E \neq \varnothing$, then $\mu(x)$ is $g$-preperiodic and so 
$\mu(x) \in Z_g(K)$.
Assume that $O_f(x) \cap E = \varnothing$.
By Lemma \ref{lem7.1},
we have $h_X|_{X \setminus E} \asymp (h_Y \circ \mu)|_{X \setminus E}$.
So $\underline h_f(x)=0$ implies $\underline h_g(\mu(x))=0$ by (i).
Thus $Z_f(K) \subset \pi^{-1}(Z_g(K))$.
\end{proof}

\begin{lem}[{\cite[Proposition 10]{Nak02}}]\label{lem7.3}
Let $X$ be a surface and $f$ a non-trivial endomorphism on $X$.
Then there is a positive integer $N$ such that 
$f^N(C)=C$ for every irreducible curve $C$ on $X$ with negative self-intersection.
\end{lem}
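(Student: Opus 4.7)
The plan proceeds in three steps.

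\textbf{Step 1 (the image of a negative curve is a negative curve).} Every surjective endomorphism of a smooth projective surface is automatically finite: if $f$ contracted some irreducible curve $E$, then $f_*[E]=0$ in $N^1(X)$, but $f_*\colon N^1(X)_{\mathbb R}\to N^1(X)_{\mathbb R}$ is invertible (the projection formula gives $f_*f^* = \deg(f)\cdot\mathrm{id}$, and finite-dimensionality then forces $f^*f_* = \deg(f)\cdot\mathrm{id}$ as well), while $[E]\cdot H > 0$ for any ample $H$ rules out $[E]=0$. Hence $f(C)$ is an irreducible curve for each irreducible $C\subset X$, and $f_*[C]=\deg(f|_C)\,[f(C)]$. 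Combining $f^*f_*=\deg(f)\cdot\mathrm{id}$ with the projection formula gives
\[
(\deg(f|_C))^2\,[f(C)]^2 \;=\; (f_*[C])^2 \;=\; [C]\cdot f^*(f_*[C]) \;=\; \deg(f)\,[C]^2,
\]
so $C^2<0$ forces $f(C)^2<0$. Thus $f$ induces a self-map on the set $\mathcal{N}:=\{C\subset X\mid C\text{ irreducible},\,C^2<0\}$.

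\textbf{Step 2 (injectivity on $\mathcal{N}$, and finiteness of $\mathcal{N}$).} If $C_1\neq C_2$ are distinct members of $\mathcal{N}$ with $f(C_1)=f(C_2)$, then $f_*[C_1]$ and $f_*[C_2]$ are positive multiples of the same class; since $f_*$ is invertible on $N^1(X)_{\mathbb R}$, the classes $[C_1]$ and $[C_2]$ are proportional, i.e.\ $[C_2]=\lambda[C_1]$ for some $\lambda>0$. Then $C_1\cdot C_2=\lambda\,C_1^2<0$, contradicting the non-negativity of the intersection of distinct irreducible curves. Hence $f|_{\mathcal N}$ is injective. The finiteness of $\mathcal{N}$ is the crux of the argument and is where the hypothesis $\deg f\geq 2$ is genuinely required: a general smooth projective surface (e.g.\ a blowup of $\mathbb P^2$ at $\geq 9$ very general points) carries infinitely many $(-1)$-curves, so any proof must use that $f$ is not an automorphism. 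I would invoke the classification of smooth projective surfaces admitting a non-trivial surjective endomorphism (à la Fujimoto--Nakayama) — abelian, hyperelliptic, toric, certain ruled surfaces, and so on — and verify case by case that $\mathcal{N}$ is finite for each.

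\textbf{Step 3 (conclusion).} With $\mathcal{N}$ finite and $f|_{\mathcal{N}}$ an injective self-map, $f$ permutes $\mathcal{N}$; let $N$ be the order of this permutation. Then $f^N(C)=C$ for every $C\in\mathcal{N}$, which is the conclusion of the lemma.

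The main obstacle is the finiteness claim in Step 2. Steps 1 and 3 are essentially formal and rely only on the basic intersection-theoretic identities available on a smooth surface; the real content of Nakayama's proposition is that admitting a non-automorphic endomorphism is restrictive enough to make the pool of irreducible negative-self-intersection curves finite — and it is there that I would expect the bulk of the work to lie.
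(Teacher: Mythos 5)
The lemma is quoted verbatim from \cite[Proposition 10]{Nak02}; the paper supplies no proof of its own, so your proposal can only be measured against its own completeness and against Nakayama's argument. Your Step 1 (images of negative curves are negative, via $f_*f^*=f^*f_*=\deg(f)\cdot\mathrm{id}$ and the projection formula) and the injectivity claim in Step 2 are correct, and your observation that the statement genuinely fails for automorphisms (positive-entropy automorphisms of blowups of $\mathbb P^2$ at $\geq 10$ very general points permute infinitely many $(-1)$-curves with unbounded orbit lengths) correctly locates where the hypothesis $\deg f\geq 2$ must enter. The reduction in Step 3 from ``$\mathcal N$ finite and $f|_{\mathcal N}$ injective'' to the uniform $N$ is also fine.

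The gap is that the finiteness of $\mathcal N$ --- which, as you say yourself, is the entire content of the lemma --- is not proved but only deferred to ``invoke the classification \ldots and verify case by case.'' That is not a proof, and it is also methodologically backwards here: Nakayama's paper is the one establishing the classification of ruled surfaces with non-trivial endomorphisms, and Proposition 10 is a tool used on the way to it, so one cannot assume the classification as input. The actual argument avoids classification entirely and runs through the ramification divisor $R$ of $f$. From $f^*f_*[C]=\deg(f)[C]$ and extremality of a negative curve one gets $f^{-1}(f(C))=C$ set-theoretically, so $f^*(f(C))=mC$ with $m\cdot\deg(f|_C)=\deg(f)$ and $(f(C))^2=\bigl(\deg(f)/\deg(f|_C)^2\bigr)C^2$. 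If the forward orbit $C_n=f^n(C)$ were infinite, then all but finitely many $C_n$ would avoid $\Supp R$, forcing $m=1$ and hence $C_{n+1}^2=C_n^2/\deg(f)$ for all large $n$; since $\deg(f)\geq 2$ this drives the negative integers $C_n^2$ to $0$, a contradiction. So every orbit is a finite cycle, and along a cycle of length $p$ one finds $\prod_i m_i=\deg(f)^{p/2}>1$, so every cycle meets $\Supp R$; as $R$ has finitely many components and cycles are disjoint, $\mathcal N$ is finite and the uniform $N$ exists. Without an argument of this kind (or a completed case analysis), your proposal does not establish the lemma.
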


As a result of \cite{MSS17}, we have the following.

%
%
%
%

\begin{thm}[{\cite{MSS17}}]\label{thm7.4}
Let $X$ be a surface and $f$ a non-trivial endomorphism on $X$ with $\delta_f>1$.
Assume that $X$ has no $(-1)$-curve and isomorphic to neither $\mathbb P^2$ nor 
abelian surfaces.
Consider the following two operations to $(X,f)$.

{\rm (a):} 
$X'$ is a surface, $f'$ is an endomorphism on $X'$, and 
$\phi:X' \to X$ is an \'etale morphism 
such that $\phi \circ f'=f \circ \phi$.
Replace $(X,f)$ by $(X',f')$.

{\rm (b):} 
Replace $(X,f)$ by $(X,f^N)$ for a positive integer $N$.

After applying (a) and (b) to $(X,f)$ finite times,
$(X,f)$ falls into one of the following two types.

{\rm (I):}  
There are a curve $C$ and a surjective morphism $\pi: X \to C$ such that $\pi \circ f=\pi$.
$$
\xymatrix{
X \ar[dr]_{\pi} \ar[rr]^{f} & & X \ar[dl]^{\pi} \\ 
 & C & 
}
$$
{\rm (II):} 
There are a curve $C$, an endomorphism $g$ on $C$ with $\delta_g=\delta_f$,
and a surjective morphism $\pi: X \to C$ such that $\pi \circ f=g \circ \pi$.
$$
\xymatrix{
X \ar[d]_{\pi} \ar[r]^{f} & X \ar[d]^{\pi} \\ 
C  \ar[r]^{g}   & C
}
$$

More precisely, under the assumption, 
$X$ is isomorphic to 
a $\mathbb P^1$-bundle, a bielliptic surface, or a properly elliptic surface 
(a minimal surface of Kodaira dimension one), and 
\begin{itemize}
\item
$\mathbb P^1$-bundles over a curve of genus $\geq 2$ and properly elliptic surfaces 
fall into type {\rm (I)}. 
\item
Hirzebruch surfaces, $\mathbb P^1$-bundles over elliptic curves, and bielliptic surfaces 
fall into type {\rm (II)}. 
\end{itemize}
\end{thm}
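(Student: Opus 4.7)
The plan is to reduce Theorem \ref{thm7.7} to cases already treated---Theorems \ref{thm4.1}, \ref{thm4.2}(i), and \ref{thm5.1}---by first passing to the minimal model and then invoking Theorem \ref{thm7.4} to obtain a fibration. Each reduction used preserves both the validity of Conjecture \ref{conj_zf} and the equality $l_f=0$.

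First I would reduce to a minimal model. By Lemma \ref{lem7.3} there is $N$ such that $f^N$ fixes every curve of negative self-intersection, and by Lemma \ref{lem3.5} one may replace $f$ by $f^N$ without changing $l_f$ or the validity of Conjecture \ref{conj_zf}. Each $(-1)$-curve can then be contracted via Lemma \ref{lem7.2}, which preserves $(\delta,l)$ and gives $Z_f(K)\subset\mu^{-1}(Z_g(K))$, so a proper invariant closed set $V\ni Z_g(K)$ on the contraction pulls back to a proper invariant closed $\mu^{-1}(V)\ni Z_f(K)$ on $X$. Reapplying Lemma \ref{lem7.3} between contractions as needed, after finitely many steps one reaches a minimal surface $X_{\min}$ with induced non-trivial endomorphism $f_{\min}$ (non-triviality persists because $\deg$ is preserved under birational morphisms).

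Next, by Enriques--Kodaira the possible $X_{\min}$ admitting a non-trivial endomorphism with $\delta>1$ are $\mathbb{P}^2$, abelian surfaces, $\mathbb{P}^1$-bundles, bielliptic surfaces, and properly elliptic surfaces. The first is handled by Theorem \ref{thm4.1} (both conclusions) and the second by Theorem \ref{thm5.1} (the Conjecture; this is precisely the case excluded from the ``$l_f=0$'' claim). For the remaining three families I would invoke Theorem \ref{thm7.4}; operations (a), (b) preserve both conclusions via Lemma \ref{lem3.8.1}(ii) with Lemma \ref{lem3.8}(iii), respectively Lemma \ref{lem3.5}, so we may assume $(X',f')$ is of type (I) or (II). In type (II), $\pi\circ f'=g\circ\pi$ with $\delta_g=\delta_{f'}>1$; since endomorphisms of a curve of genus $\geq 2$ have $\delta=1$ by Hurwitz, $C$ is $\mathbb{P}^1$ or elliptic and $g$ is polarized, so Theorem \ref{thm4.1}/\ref{thm5.1} makes $Z_g(K)$ finite with $l_g=0$. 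The type (II) surfaces listed in Theorem \ref{thm7.4} all have $\rho\leq 2$, so Theorem \ref{thm4.2}(i) gives $l_{f'}=0$; hence $(\delta_{f'},l_{f'})=(\delta_g,l_g)$, and Lemma \ref{lem3.8}(ii) puts $Z_{f'}(K)\subset\pi^{-1}(Z_g(K))$, a proper closed $f'$-invariant subset.

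In type (I), $\pi\circ f'=\pi$, so every fiber is $f'$-invariant and $f'$ restricts to a polarized endomorphism on the generic fiber. When $X_{\min}$ is a $\mathbb{P}^1$-bundle, Theorem \ref{thm7.4} specifies that the base $C$ has genus $\geq 2$, so $C(K)$ is finite by Faltings' theorem and $Z_{f'}(K)\subset\pi^{-1}(C(K))$ lies in a finite union of fibers; also $l_{f'}=0$ from $\rho(X')=2$ via Theorem \ref{thm4.2}(i). For properly elliptic $X_{\min}$, I would obtain $l_{f'}=0$ by decomposing an ample height $h_{X'}\asymp a\,h_C\circ\pi+h_V$ with $h_V$ associated to a relatively ample divisor: the first summand is preserved by $\pi\circ f'=\pi$, while the second grows fiberwise like $\delta_{f'}^n h_V$ by the fiberwise polarization, yielding $h_{X'}\circ f'^n\prec\delta_{f'}^n h_{X'}$. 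The hard part will be verifying the Conjecture when $C$ has genus $\leq 1$, where $Z_{f'}(K)$ consists of points torsion in their fiber but $C(K)$ may be Zariski dense, so Faltings does not apply. I would attack this via the relative Jacobian $J\to C$: each $n$-torsion locus $J[n]\to C$ is finite \'etale, and the Chevalley--Weil theorem (Theorem \ref{thm_CW}) together with a Silverman specialization argument should confine $K$-rational torsion points to finitely many torsion multisections, which together with the singular fibers would supply the desired proper $f'$-invariant closed subset containing $Z_{f'}(K)$.
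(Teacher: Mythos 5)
There is a fundamental mismatch here: the statement you were asked to prove is Theorem \ref{thm7.4}, the structure theorem (quoted from \cite{MSS17}) asserting that a non-trivial endomorphism with $\delta_f>1$ on a minimal surface other than $\mathbb P^2$ and abelian surfaces becomes, after finitely many equivariant \'etale covers and iterates, compatible with a fibration $\pi:X\to C$ of type (I) or (II), together with the classification of which surfaces ($\mathbb P^1$-bundles, bielliptic, properly elliptic) occur and which type each falls into. Your proposal instead sets out to prove Theorem \ref{thm7.7} (``reduce Theorem \ref{thm7.7} to cases already treated''), and in the middle of the argument you explicitly \emph{invoke} Theorem \ref{thm7.4} as a black box (``For the remaining three families I would invoke Theorem \ref{thm7.4}''). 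As a proof of Theorem \ref{thm7.4} this is circular and contains none of the required content: you never show that a minimal surface $\not\cong\mathbb P^2$, non-abelian, carrying a non-trivial endomorphism with $\delta_f>1$ must be a $\mathbb P^1$-bundle, a bielliptic surface, or a properly elliptic surface (which needs the Enriques--Kodaira classification combined with the Fujimoto--Nakayama structure theory of surfaces admitting non-isomorphic surjective endomorphisms, ruling out K3, Enriques, and general type); you never construct the equivariant fibration in each case (the ruling for $\mathbb P^1$-bundles, the Iitaka fibration for properly elliptic surfaces, the projection after an \'etale cover by a product of elliptic curves in the bielliptic case); and you never verify the claims $\pi\circ f=\pi$ versus $\delta_g=\delta_f$ distinguishing types (I) and (II). Note that the paper itself offers no proof of Theorem \ref{thm7.4} --- it is cited from \cite{MSS17} --- so a genuine proof would have to reproduce the arguments of that reference.

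Even read charitably as a proof of Theorem \ref{thm7.7}, your argument has an admitted gap: for elliptic fibrations over a base of genus $\le 1$ you defer to a relative-Jacobian/Chevalley--Weil/specialization argument that you do not carry out. The paper avoids this entirely: by \cite[Theorem 3.2]{Fuj02} a properly elliptic surface with a non-trivial endomorphism is \'etale-covered by $E\times C$ with $g(C)\ge 2$, hence is not potentially dense by Faltings and Theorem \ref{thm_CW}, so Conjecture \ref{conj_zf} holds trivially and no fibrewise torsion analysis is needed. If your goal is Theorem \ref{thm7.4}, you need to start over with the classification arguments of \cite{MSS17}; if your goal is Theorem \ref{thm7.7}, you should replace the relative-Jacobian sketch with the non-potential-density argument.
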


Lemma \ref{lem7.5} and Lemma \ref{lem7.6} below treat the type (I) and (II), respectively.

\begin{lem}\label{lem7.5}
Let $X$ be a surface and $f$ an endomorphism on $X$ such that 
$\delta_f>1$.
Let $C$ be a curve and $\pi: X \to C$ a surjective morphism such that $\pi \circ f=\pi$.
Then $l_f=0$.
\end{lem}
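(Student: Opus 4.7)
The plan is to reduce the height estimate to curve height theory by exploiting that $\pi \circ f = \pi$ forces every $f$-orbit to lie in a single fiber of $\pi$.

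Fix $x \in X(\overline{\mathbb Q})$ and let $W_x$ be the Zariski closure of $O_f(x)$. Since $\pi(f^n(x)) = \pi(x)$ for all $n$, $W_x$ is contained in the fiber $\pi^{-1}(\pi(x))$, so $\dim W_x \leq 1$. If $\dim W_x = 0$, then $x$ is $f$-preperiodic and the conclusion is trivial. Otherwise $\dim W_x = 1$; because $f$ permutes the finitely many irreducible components of $W_x$, some power $f^N$ fixes each of them, and by Lemma \ref{lem3.5}(i) it is enough to bound $l_{f^N}$, so after replacing $f$ by $f^N$ we may assume $f(W_0) = W_0$, where $W_0 \subset W_x$ is the irreducible component through $x$. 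Let $\nu: \widetilde W_0 \to W_0$ be the normalization and $g: \widetilde W_0 \to \widetilde W_0$ the unique lift of $f|_{W_0}$, and choose $\widetilde x \in \nu^{-1}(x)$, so that $\nu(g^n(\widetilde x)) = f^n(x)$. Because $\nu$ is finite, the pull-back $\nu^*(H|_{W_0})$ of any ample $H$ on $X$ is ample on $\widetilde W_0$, hence $h_X \circ \nu \asymp h_{\widetilde W_0}$, and the desired estimate reduces to $h_{\widetilde W_0}(g^n(\widetilde x)) = O(\delta_f^n)$.

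The central technical point is $\deg g \leq \delta_f$. Since $\widetilde W_0$ is a smooth projective curve, $\delta_g = \deg g$. By Theorem \ref{thm_existence} applied to $g$ there is a point $\widetilde y \in \widetilde W_0(\overline{\mathbb Q})$ with $\alpha_g(\widetilde y) = \deg g$; using $h_X \circ \nu \asymp h_{\widetilde W_0}$ we get $\alpha_f(\nu(\widetilde y)) = \alpha_g(\widetilde y) = \deg g$, and Theorem \ref{thm_KSM} forces $\alpha_f(\nu(\widetilde y)) \leq \delta_f$, giving $\deg g \leq \delta_f$. An alternative but lengthier route proves the stronger equality $\delta_f = \deg f$ by analyzing the $f^*$-invariant filtration $0 \subset \mathbb R F \subset F^{\perp} \subset N^1(X)_{\mathbb R}$ determined by the fiber class $F$ (noting $f^*F = F$) and applying the Hodge index theorem to the intersection form on $F^{\perp}/\mathbb R F$; establishing $\deg g \leq \delta_f$ is the hard part of the argument either way.

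With $\deg g \leq \delta_f$ in hand, curve height theory closes the estimate. If $\deg g \geq 2$, then $g$ is a polarized endomorphism of $\widetilde W_0$ and the Call--Silverman canonical height $\hat h_g = h_{\widetilde W_0} + O(1)$ yields $h_{\widetilde W_0}(g^n(\widetilde x)) = (\deg g)^n \hat h_g(\widetilde x) + O(1) = O(\delta_f^n)$. If $\deg g = 1$, then $g$ is an automorphism of a smooth projective curve and $h_{\widetilde W_0}(g^n(\widetilde x))$ grows at most polynomially in $n$ (linearly when $\widetilde W_0 \cong \mathbb P^1$, at most quadratically in the elliptic case, and bounded in higher genus), which is $O(\delta_f^n)$ since $\delta_f > 1$. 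In both cases $h_X(f^n(x)) = O(\delta_f^n)$, and since $x$ was arbitrary this gives $l_f = 0$.
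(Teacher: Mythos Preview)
Your proof is correct and follows the same architecture as the paper's: restrict to a fiber, pass to an iterate so that a single irreducible component is preserved, normalize to get an endomorphism $g$ on a smooth curve, establish $\delta_g \leq \delta_f$, and finish with curve height theory. The one substantive difference is how you prove $\delta_g \leq \delta_f$. The paper does this by a direct intersection-theoretic comparison: writing $\delta_g = \lim_n (\nu_* Z \cdot (f^n)^*H)^{1/n}$ via the projection formula and bounding the curve class $\nu_* Z$ by a multiple of an ample class $H$, so that the limit is dominated by $\lim_n (H \cdot (f^n)^*H)^{1/n} = \delta_f$. Your route instead goes through arithmetic degrees, producing a point $\widetilde y$ with $\alpha_g(\widetilde y)=\delta_g$ and pushing it down to $X$ to invoke $\alpha_f \leq \delta_f$. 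Both are valid; the paper's argument is more self-contained, while yours trades that for a clean appeal to Theorems~\ref{thm_existence} and~\ref{thm_KSM}.

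Two minor points of presentation. First, working with the Zariski closure $W_x$ of the orbit rather than the full fiber $F=\pi^{-1}(\pi(x))$ makes the phrase ``$f$ permutes the finitely many irreducible components of $W_x$'' slightly delicate: $f$ certainly maps $W_x$ into itself, but surjectivity on $W_x$ is not immediate. The paper avoids this by using $F$, on which $f|_F$ is genuinely surjective (since $f$ is surjective on $X$ and $\pi\circ f=\pi$), so the components really are permuted. Second, your appeal to Lemma~\ref{lem3.5}(i) to justify replacing $f$ by $f^N$ is a bit misleading because $N$ depends on $x$; what you actually need (and what the paper uses implicitly) is the pointwise observation that boundedness of $\{h_X(f^n(x))/\delta_f^n\}_n$ follows from boundedness of $\{h_X((f^N)^n(f^k(x)))/\delta_f^{Nn}\}_n$ for each $k=0,\dots,N-1$. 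Neither issue affects correctness.
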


\begin{proof}
Take any $x \in X(\overline{\mathbb Q})$ and set $F=\pi^{-1}(\pi(x))$.
Since $f|_F$ permutes the irreducible components of $F$,
replacing $f$ by a power, we may assume that $f$ preserves any irreducible components 
of $F$.
So $O_f(x) \subset F_1$ for some irreducible component $F_1$ of $F$.
Take the normalization $Z$ of $F_1$ and let $\nu: Z \to X$ be the induced morphism.
Take $x_0 \in \nu^{-1}(x)$.
$f|_{F_1}: F_1 \to F_1$ induces an endomorphism $g$ on $Z$ such that 
$\nu \circ g=f \circ \nu$.
Take an ample divisor $H$ on $X$.
Then we can take $M>0$ such that 
$(\nu_*Z \cdot D) \leq M(H \cdot D)$ for any nef $\mathbb R$-divisor $D$.
So
\begin{align*}
\delta_g
&=\lim_{n \to \infty}\deg(g^{n*}\nu^*H)^{1/n}
=\lim_{n \to \infty}(\nu_*Z \cdot f^{n*}H)^{1/n} \\
&\leq \lim_{n \to \infty}(M(H \cdot f^{n*}H))^{1/n}
=\delta_f.
\end{align*}
We have $h_X \circ \nu \asymp h_Z$ since $\nu$ is finite.
Hence 
\begin{align*}
\overline h_{f,\delta_f,0}(x)
&= \limsup_{n \to \infty} \frac{h_X(f^n(x))}{\delta_f^n} \\
&= \limsup_{n \to \infty} \frac{h_X(f^n \nu(x_0))}{\delta_f^n} \\
&\asymp \limsup_{n \to \infty} \left( \frac{\delta_g}{\delta_f} \right)^n 
\frac{h_Z(g^n(x_0))}{\delta_g^n} 
< \infty.
\end{align*}
Note that $g$ is an endomorphism on a curve and so  $l_g=0$ if 
$\delta_g>1$ due to Theorem \ref{thm4.1}.
Then it follows that $l_f=0$.
\end{proof}

\begin{lem}\label{lem7.6}
Let $X$ be a surface and $f$ an endomorphism on $X$ such that 
$\delta_f>1$.
Let $C$ be a curve, $g$ an endomorphism on $C$,
and $\pi: X \to C$ a surjective morphism such that $\pi \circ f=g \circ \pi$.
Assume that $(\delta_f,l_f)=(\delta_g,l_g)$.
Then Conjecture \ref{conj_zf} holds for $f$.
\end{lem}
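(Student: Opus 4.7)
The plan is to reduce this statement on the surface $X$ to the corresponding statement on the curve $C$, where Theorem \ref{thm4.1} already settles Conjecture \ref{conj_zf} with sharp information.

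First I would apply Lemma \ref{lem3.8}(ii) to the semiconjugacy $\pi \circ f = g \circ \pi$. The assumption $(\delta_f, l_f) = (\delta_g, l_g)$ is precisely what is needed to invoke part (ii) of that lemma, yielding $\underline h_g \circ \pi \prec \underline h_f$ and hence the inclusion $Z_f(K) \subset \pi^{-1}(Z_g(K))$ for every number field $K$ large enough to contain the fields of definition of $X$, $C$, $f$, $g$ and $\pi$.

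Next I would exploit the fact that $C$ is a curve, so $N^1(C)_{\mathbb R} \cong \mathbb R$ and $\rho(C) = 1$. Consequently any ample divisor $H$ on $C$ automatically satisfies $g^*H \equiv \delta_g H$. Because $\delta_g = \delta_f > 1$, Theorem \ref{thm4.1} applies to the pair $(C, g)$ and gives $Z_g(K) = \Preper_g(K)$, which is a \emph{finite} set by the second half of that theorem.

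Finally, since $\pi \colon X \to C$ is a surjective morphism from a surface to a curve, its fibres are one-dimensional, so $V := \pi^{-1}(Z_g(K))$ is a finite union of fibres of $\pi$, hence a proper closed subset of $X$ containing $Z_f(K)$. To confirm $f$-invariance, observe that Proposition \ref{prop3.2}(iii) gives $\underline h_g \circ g = \delta_g \underline h_g$, so $g(Z_g(K)) \subset Z_g(K)$ (using that $g$ is defined over $K$); consequently, for $x \in V$ we have $\pi(f(x)) = g(\pi(x)) \in Z_g(K)$, i.e. $f(x) \in V$. This verifies Conjecture \ref{conj_zf} for $f$. There is no real obstacle in this reduction: the entire content is packaged by Lemma \ref{lem3.8} and by the fact that every curve has Picard number one, which forces the curve case into the polarized situation already handled by Theorem \ref{thm4.1}.
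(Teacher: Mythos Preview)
Your argument is correct and matches the paper's proof essentially step for step: both use Lemma~\ref{lem3.8}(ii) to get $Z_f(K)\subset\pi^{-1}(Z_g(K))$, then apply Theorem~\ref{thm4.1} on the curve $C$ (Picard number one) to conclude that $Z_g(K)=\Preper_g(K)$ is finite, so that $\pi^{-1}(Z_g(K))$ is an $f$-invariant proper closed subset. Your explicit verification of $f$-invariance via Proposition~\ref{prop3.2}(iii) is a harmless elaboration; the paper simply asserts it, relying on the evident fact that $\Preper_g(K)$ is $g$-stable.
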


\begin{proof}
Take a number field $K$ where all concerned are defined.
It follows from Lemma \ref{lem3.8} (ii) that  
$Z_f(K) \subset \pi^{-1}(Z_g(K))$.
Applying Theorem \ref{thm4.1} to $g$, it follows that 
$Z_g(K)=\Preper_g(K)$ 
and $S=\Preper_g(K)$ is finite.
So $Z_f(K)$ 
is contained in the $f$-invariant proper closed subset 
$\pi^{-1}(S)$.
\end{proof}

\begin{proof}[Proof of Theorem \ref{thm7.7}]
Assume that $X$ has a $(-1)$-curve $E$.
By Lemma \ref{lem7.3}, $f^N(E)=E$ for some $N \in \mathbb Z_{>0}$.
Using Lemma \ref{lem3.5}, we may assume that $f(E)=E$ by replacing $f$ by $f^N$.
Let $\mu: X \to Y$ be the contraction of $E$.
Then an endomorphism $g$ on $Y$ satisfying $\mu \circ f=g \circ \mu$ is induced.
Lemma \ref{lem7.2} implies that 
$Z_{f}(K) \subset \pi^{-1}(Z_g(K))$ for any sufficiently large number field $K$.
Assume that $Z_g(K) \subset W(K)$
for a $g$-invariant proper closed subset $W \subset Y$.
Then $V=\pi^{-1}(W)$ is an $f$-invariant proper closed subset of $X$ satisfying  
$Z_f(K) \subset V(K)$. 
This argument shows that the proof of the theorem for $f$ is reduced to that for $g$.

Continuing this reduction process, we may assume that $X$ has no $(-1)$-curve.
By Lemma \ref{lem3.5} and Lemma \ref{lem3.8.1},
it is sufficient  to apply operations (a) and (b) in Theorem \ref{thm7.4} to $(X,f)$ 
and prove the assertion for the replaced ones.
\begin{itemize}
\item
If $X=\mathbb P^2$,
then $l_f=0$ and Conjecture \ref{conj_zf} holds for $f$ by Theorem \ref{thm4.1}.
\item
If $X$ is a $\mathbb P^1$-bundle, then $\rho(X)=2$ and so 
$l_f=0$ by Theorem \ref{thm4.2} (i).
\begin{itemize}
\item
If $X$ is a $\mathbb P^1$-bundle over a curve of genus $\geq 2$,
then $X$ is not potentially dense, 
and so Conjecture \ref{conj_zf} trivially holds.
\item
If $X$ is a Hirzebruch surface or a $\mathbb P^1$-bundle over an elliptic curve,
then $(X,f)$ is reduced into type (II) by Theorem \ref{thm7.4}.
So there is an endomorphism $g$ on a curve $C$ with $\delta_g=\delta_f$ 
and a surjective morphism $\pi: X \to C$ such that $\pi \circ f=g \circ \pi$.
Since $l_f=0$ and $\delta_f=\delta_g$,
it follows from Lemma \ref{lem3.8} (i) that $l_g \leq l_f=0$.
Then Lemma \ref{lem7.6} implies that Conjecture \ref{conj_zf} holds for $f$.
\end{itemize}
\item
If $X$ is an abelian surface,
the claim is a special case of Theorem \ref{thm5.1}.
\item
If $X$ is a bielliptic surface,
$\rho(X) \leq h^{1,1}(X)=2$ and so $l_f=0$ by Theorem \ref{thm4.2} (i).
By Theorem \ref{thm7.4}, $(X,f)$ is reduced into type (II).
So there is an endomorphism $g$ on a curve $C$ with $\delta_g=\delta_f$ 
and a surjective morphism $\pi: X \to C$ such that $\pi \circ f=g \circ \pi$.
Since $l_f=0$ and $\delta_f=\delta_g$,
it follows from Lemma \ref{lem3.8} (i) that $l_g \leq l_f=0$.
Then Lemma \ref{lem7.6} implies that Conjecture \ref{conj_zf} holds for $f$.
\item
If $X$ is a properly elliptic surface,
then it follows from \cite[Theorem 3.2]{Fuj02} that 
there is an elliptic curve $E$ and a curve $C$ of genus 
$\geq 2$ such that 
$E \times C$ is an \'etale cover of $X$.
Here $E \times C$ is not potentially dense 
(if $E \times C$ is potentially dense, then $C$ is also potentially dense,
but this contradicts Faltings's theorem).
So $X$ is not potentially dense due to Theorem \ref{thm_CW}.
Hence Conjecture \ref{conj_zf} trivially holds for $f$.
Moreover, Theorem \ref{thm7.4} and Lemma \ref{lem7.5} show that $l_f=0$.
\end{itemize}

Eventually, $l_f=0$ if $X$ is not birational to an abelian variety,
and Conjecture \ref{conj_zf} holds for $f$ in any case.
\end{proof}

\section{Applications}\label{sec8}

In this section, we obtain two applications of ample canonical heights.

As we saw in the introduction, the Call--Silverman canonical height 
for a polarized endomorphism is used to show that the number of preperiodic points 
over any fixed number field is finite.
For general endomorphisms, our main conjecture (Conjecture \ref{conj_zf}) implies 
the non-density of preperiodic points over any fixed number fields:

\begin{prop}\label{prop8.0}
Let $X$ be a smooth projective variety and $f$ an endomorphism on $X$ with $\delta_f>1$.
Assume that Conjecture \ref{conj_zf} holds for $f$.
Then $\Preper_f(K)$ is not Zariski dense for any number field $K$.
\end{prop}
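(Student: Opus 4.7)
The proof is essentially a one-step reduction to Conjecture \ref{conj_zf}, so my plan is to make the inclusion $\Preper_f(K) \subset Z_f(K)$ explicit and then invoke the hypothesis.

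First I would observe that any $f$-preperiodic point has finite forward orbit, so for $x \in \Preper_f(\overline{\mathbb Q})$ the sequence $\{h_X(f^n(x))\}_{n \geq 0}$ takes only finitely many values and is therefore bounded. Since we assume $\delta_f > 1$, we have $\delta_f^n n^{l_f} \to \infty$, so
\[
\underline h_f(x) \leq \overline h_f(x) = \limsup_{n \to \infty} \frac{h_X(f^n(x))}{\delta_f^n n^{l_f}} = 0.
\]
(This is already recorded as Proposition \ref{prop3.2}(ii).) Hence $\Preper_f(K) \subset Z_f(K)$ for every subfield $K \subset \overline{\mathbb Q}$.

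Next I would apply the standing hypothesis that Conjecture \ref{conj_zf} holds for $f$: for the given number field $K$ there exists an $f$-invariant proper closed subset $V \subsetneq X$ with $Z_f(K) \subset V(K)$. Combining the two inclusions gives $\Preper_f(K) \subset V(K)$, and since $V$ is a proper closed subset of $X$ its set of $K$-points is certainly not Zariski dense in $X$.

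There is no real obstacle here; the only thing to check carefully is that the assumption $\delta_f > 1$ is enough to push the vanishing of $\overline h_f$ through at preperiodic points (the alternative hypothesis $l_f > 0$ in Proposition \ref{prop3.2}(ii) is not needed, since $\delta_f > 1$ alone forces $\delta_f^n n^{l_f} \to \infty$). Thus the proof is essentially immediate from Proposition \ref{prop3.2}(ii) together with the assumed form of Conjecture \ref{conj_zf}.
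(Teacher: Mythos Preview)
Your proof is correct and follows exactly the same route as the paper: establish $\Preper_f(K)\subset Z_f(K)$ via Proposition~\ref{prop3.2}(ii), then apply the assumed Conjecture~\ref{conj_zf} to trap $Z_f(K)$ inside a proper closed subset. The paper's own proof is the one-line version of what you wrote.
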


\begin{proof}
It is clear that $\Preper_f(K) \subset Z_f(K)$ for any subfield 
$K \subset \overline{\mathbb Q}$.
So the assertion follows.
\end{proof}

Therefore Theorem \ref{thm_main} deduces the following.

\begin{thm}\label{thm8.0.1}
Let $X$ be a smooth projective variety and $f$ an endomorphism on $X$ with $\delta_f>1$.
Assume that $(X,f)$ satisfies one of the following conditions.
\begin{itemize}
\item  
$f^*H \equiv \delta_f H$ for an ample $\mathbb R$-divisor $H$ on $X$.
\item 
$\rho(X) \leq 2$ and $f$ is an automorphism.
\item 
$X$ is an abelian variety.
\item 
$X$ is a smooth projective surface.
\end{itemize}
Then $\Preper_f(K)$ is not Zariski dense for any number field $K$.
\end{thm}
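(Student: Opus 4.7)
The plan is to deduce Theorem \ref{thm8.0.1} as an immediate consequence of Theorem \ref{thm_main} combined with Proposition \ref{prop8.0}. In each of the four listed situations I would first verify that Theorem \ref{thm_main} applies, so that Conjecture \ref{conj_zf} is known to hold for the pair $(X,f)$. The first three bullets are literally the hypotheses of the first three bullets of Theorem \ref{thm_main}. For the fourth bullet, given a surjective endomorphism $f$ on a smooth projective surface $X$, I split into two cases: if $f$ is an automorphism, Conjecture \ref{conj_zf} is supplied by Theorem \ref{thm6.6}; if $f$ is non-trivial (that is, not an automorphism), it is supplied by Theorem \ref{thm7.7}. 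Under the convention fixed in the Notation and Conventions that an endomorphism is surjective, these two alternatives exhaust all possibilities.

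Once Conjecture \ref{conj_zf} is in hand, Proposition \ref{prop8.0} yields the desired conclusion without further work: for any number field $K$, the containment $\Preper_f(K) \subseteq Z_f(K)$ holds by Proposition \ref{prop3.2}(ii), using precisely the hypothesis $\delta_f>1$, and Conjecture \ref{conj_zf} places $Z_f(K)$ inside a proper $f$-invariant Zariski closed subset $V \subsetneq X$. Therefore $\Preper_f(K) \subseteq V(K)$ and so $\Preper_f(K)$ is not Zariski dense in $X$.

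There is no substantive obstacle; all the hard work has already been done in Sections \ref{sec4}--\ref{sec7}. The only point that deserves to be written out carefully is the case split in the surface bullet, making explicit that both Theorem \ref{thm6.6} (for automorphisms) and Theorem \ref{thm7.7} (for non-trivial endomorphisms) are being invoked to cover all surjective self-morphisms of $X$. Accordingly, the write-up can be kept to a few lines: state the dichotomy, cite the corresponding result from Theorem \ref{thm_main} in each case, and close by applying Proposition \ref{prop8.0}.
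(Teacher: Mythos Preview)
Your proposal is correct and matches the paper's approach exactly: the paper states Theorem \ref{thm8.0.1} as an immediate consequence of Theorem \ref{thm_main} via Proposition \ref{prop8.0}, without writing out a separate proof. Your explicit case split for the surface bullet (Theorem \ref{thm6.6} versus Theorem \ref{thm7.7}) simply unpacks what Theorem \ref{thm_main} already packages together.
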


\begin{rem}
(i) As we saw in Section \ref{sec4},
$\Preper_f(K)$ is finite for any number field $K$ in the first two cases.

(ii) We can also prove the abelian variety case by using the nef canonical height 
(cf.~\cite[Theorem 1]{KaSi16b}).
\end{rem}

Let us see another application of ample canonical heights.
Using ample canonical heights, we can investigate the intersection $O_f(x) \cap O_g(y)$ of  
two dense orbits $O_f(x), O_g(y)$ of two endomorphisms on a variety.
The results and arguments in this section are based on the argument appearing in 
\cite[Theorem 5.11.0.1]{BGT16}.

\begin{thm}\label{thm8.1}
Let $X$ be a smooth projective variety and $f,g$ endomorphisms on $X$ 
such that $\delta_f=\delta_g>1$ and $l_f=l_g$.
Assume that Conjecture \ref{conj_main} holds for $f$ and $g$.
Take a dense $f$-orbit $O_f(x)$ and  a dense $g$-orbit $O_g(y)$.
Then the set 
$\{ |n-m| \mid n,m \in \mathbb Z_{\geq 0},\ f^n(x)=g^m(y) \}$ is upper bounded.
\end{thm}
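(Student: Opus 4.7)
The plan is to argue by contradiction. Suppose the set $S=\{|n-m|\mid n,m\in\mathbb Z_{\geq 0},\ f^n(x)=g^m(y)\}$ is unbounded. Set $\delta=\delta_f=\delta_g>1$ and $l=l_f=l_g$. By the symmetry between $(f,x)$ and $(g,y)$, we may pass to a sequence of pairs $(n_k,m_k)$ with $f^{n_k}(x)=g^{m_k}(y)$ and $n_k-m_k\to+\infty$; in particular $n_k\to\infty$.

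Next, I would combine two estimates on the height along orbits. The assumption that Conjecture \ref{conj_main} holds for $f$, together with the density of $O_f(x)$, gives $\underline h_f(x)>0$, so there exist $c>0$ and $N_0$ with
\[
h_X(f^n(x))\ \geq\ c\,\delta^n n^l \qquad \text{for all } n\geq N_0.
\]
On the other hand, Theorem \ref{thm_Mat} supplies a constant $C>0$ such that
\[
h_X(g^m(y))\ \leq\ C\,\delta^m m^l \qquad \text{for all } m\geq 1.
\]

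From $f^{n_k}(x)=g^{m_k}(y)$ I would first deduce $m_k\to\infty$: otherwise, after passing to a subsequence, $m_k$ is constant and $g^{m_k}(y)$ is a single point of bounded height, contradicting $h_X(f^{n_k}(x))\geq c\,\delta^{n_k}n_k^l\to\infty$. Since $n_k-m_k\to\infty$, we have $n_k\geq m_k$ for large $k$ and hence $(m_k/n_k)^l\leq 1$. Equating heights at $n=n_k$, $m=m_k$ for $k$ large then yields
\[
c\,\delta^{n_k}n_k^l\ \leq\ C\,\delta^{m_k}m_k^l,
\]
equivalently $\delta^{n_k-m_k}\leq (C/c)(m_k/n_k)^l\leq C/c$, which contradicts $\delta>1$ and $n_k-m_k\to\infty$. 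The analogous reduction with $m_k-n_k\to\infty$ is handled by swapping the roles of $(f,x)$ and $(g,y)$, using $\underline h_g(y)>0$ instead.

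The main obstacle is essentially bookkeeping: one must verify that the extremal pairs can be chosen with $n_k,m_k\to\infty$ and $n_k\geq m_k$, so that the lower bound coming from $\underline h_f(x)>0$ and the universal upper bound for $g$ at index $m_k$ may be placed on the two sides of a single inequality. Once that is done the contradiction is immediate, because the lower bound is forced to grow at the maximal geometric-polynomial rate $\delta^n n^l$ along the dense orbit while the upper bound for $g$ grows at exactly the same rate in $m$, leaving no room for the factor $\delta^{n_k-m_k}\to\infty$ to be absorbed by the polynomial term $(m_k/n_k)^l$.
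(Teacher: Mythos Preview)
Your argument is correct in outline and matches the paper's approach: bound $h_X(f^n(x))$ from below via $\underline h_f(x)>0$, bound $h_X(g^m(y))$ from above, and note that at a coincidence $f^n(x)=g^m(y)$ the factor $\delta^{n-m}$ cannot be absorbed. The paper does this directly rather than by contradiction, obtaining the explicit bound $n-m\leq\log_\delta(C/\varepsilon)$, but that difference is cosmetic.

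There is, however, one genuine slip. Theorem~\ref{thm_Mat} does \emph{not} give the upper bound $h_X(g^m(y))\leq C\,\delta^m m^{l}$; it only yields $h_X(g^m(y))\leq C\,\delta^m m^{\rho(X)}h_X(y)$ with exponent $\rho(X)\geq l$. With that weaker bound your contradiction does not go through: along a hypothetical sequence such as $n_k=2^k$, $m_k=2^k-k$ one has $\delta^{\,n_k-m_k}=\delta^{k}$ while $m_k^{\rho(X)}/n_k^{l}\leq n_k^{\rho(X)-l}=2^{k(\rho(X)-l)}$, and for $\rho(X)-l$ large this swamps $\delta^{k}$. The bound you actually wrote, with exponent $l$, is correct but comes from the definition of $l_g$ (equivalently, from $\overline h_g(y)<\infty$), which is precisely what the paper invokes. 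With that citation corrected your proof is complete, and the separate verification that $m_k\to\infty$ then becomes unnecessary: the inequality $\delta^{\,n-m}\leq C/c$ holds for every coincidence pair with $n\geq m$ and $n\geq N_0$, giving a uniform bound directly.
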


\begin{rem}
The proof of Theorem \ref{thm8.1} is similar to the proof of 
\cite[Theorem 5.11.0.1]{BGT16},
where  polarized endomorphisms are treated.
\end{rem}

\begin{proof}[Proof of Theorem \ref{thm8.1}]
Set $(\delta,l)=(\delta_f,l_f)$.
Since Conjecture \ref{conj_main} holds for $f$, we have $\underline h_f(x)>0$.
So there is $\varepsilon >0$ such that 
$\delta^{-n} n^{-l} h_X(f^n(x)) \geq \varepsilon$ for every $n \in \mathbb Z_{\geq 0}$.
Moreover, since $\overline h_g(y)<\infty$,
there is $C>0$ such that 
$\delta^{-n} n^{-l} h_X(g^n(x)) \leq C$ for every $n \in \mathbb Z_{\geq 0}$.
Take $n,m \in \mathbb Z_{\geq 0}$ such that $n \geq m$ and $f^n(x)=g^m(y)$.
Then we have 
$$\delta^{n-m} \varepsilon \leq \delta^{n-m} \frac{h_X(f^n(x))}{\delta^n n^l}
= \frac{h_X(g^m(y))}{\delta^m m^l} \left( \frac{m}{n} \right)^l \leq C.$$
So $n-m \leq \log_{\delta}(C/\varepsilon)$.
Similarly, for $n,m \in \mathbb Z_{\geq 0}$ such that $n \leq m$ and $f^n(x)=g^m(y)$,
$m-n$ is upper bounded.
Hence the claim follows.
\end{proof}

We need the following dynamical Mordell--Lang theorem for \'etale endomorphisms 
due to Bell--Ghioca--Tucker.

\begin{thm}[The dynamical Mordell--Lang theorem for \'etale maps, 
{\cite[Theorem 1.3]{BGT10}}]\label{thm_BGT}
Let $X$ be a projective variety, $f$ an \'etale endomorphism on $X$,
and $V$ a closed subvariety of $X$.
Then the set $\{ n \in \mathbb Z_{\geq 0} \mid f^n(x) \in V \}$ is a finite union of sets 
of the form $\{ kn+i\}_{n=0}^\infty$ 
for some $k,i \in \mathbb Z_{\geq 0}$.
\end{thm}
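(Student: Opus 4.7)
My plan is to follow the $p$-adic interpolation method of Bell--Ghioca--Tucker. First I would spread everything out over a finitely generated $\mathbb Z$-subalgebra $R \subset \overline{\mathbb Q}$ on which $X$, $f$, $V$, and $x$ are all defined, and choose a prime $\mathfrak p$ of $R$ of sufficiently large residue characteristic $p$ so that $X$ has smooth reduction, $f$ reduces to an étale endomorphism of the reduction, and $x$ extends to an $\mathcal O$-point, where $\mathcal O$ denotes the $\mathfrak p$-adic completion with residue field $k$. On the special fiber the orbit of $\bar x$ is eventually periodic; discarding a finite prefix and replacing $f$ by $f^L$ for the cycle length $L$ partitions $\mathbb Z_{\geq 0}$ into finitely many residue classes modulo $L$, so I may reduce to the case $\bar f(\bar x) = \bar x$. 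Since the differential $d\bar f_{\bar x}$ lives in the finite group $\mathrm{GL}_d(k)$, a further iteration trivializes it. After these replacements, $f$ preserves a formal polydisc $U \cong \mathcal O^d$ around $x$ and acts on it by a power series of the form $\mathrm{id} + p \cdot (\text{convergent series})$.

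The second and crucial step is the construction of a $p$-adic analytic interpolation $\varphi \colon \mathcal O \to U$ satisfying $\varphi(n) = f^n(x)$ for every $n \in \mathbb Z_{\geq 0}$. The congruence $f \equiv \mathrm{id} \pmod{p}$ obtained above is exactly what one needs to write $f = \exp(pD)$ for a formal vector field $D$ with $\mathcal O$-coefficients; the time-$t$ flow $\exp(t p D)$ then defines a power series in $t$ that converges on the closed disc $t \in \mathcal O$, and specializing to integer $t = n$ recovers $f^n(x)$. The étaleness is essential here: it is what guarantees that the formal disc $U$ is genuinely $f$-stable and that $f$ really is the time-one map of a convergent $p$-adic flow, rather than a map which escapes $U$ or develops ramification.

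Finally, the defining equations of $V$ pulled back through $\varphi$ become a finite collection of $p$-adic analytic functions on the closed unit disc $\mathcal O$. By Strassman's theorem, each such function is either identically zero or has finitely many zeros on $\mathcal O$; intersecting, the set $\varphi^{-1}(V)$ is either all of $\mathcal O$ or finite. Restricting to $\mathbb Z_{\geq 0} \subset \mathcal O$ and undoing the prefix/residue-class decomposition reassembles the return set $\{n : f^n(x) \in V\}$ as a finite union of sets $\{kn+i\}_{n \geq 0}$ (with $k = 0$ absorbing the finite contributions). The main obstacle in the whole program is the $p$-adic analytic interpolation step: the spreading out and the appeal to Strassman are essentially formal, but producing $\varphi$ demands a careful avoidance of bad primes (those for which no iterate of $d\bar f_{\bar x}$ lies in the pro-$p$ part of $\mathrm{GL}_d(\mathcal O)$) and a genuine use of étaleness to keep the orbit trapped in a single linearisable formal neighborhood.
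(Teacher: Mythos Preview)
The paper does not give its own proof of this theorem: it is quoted verbatim as \cite[Theorem~1.3]{BGT10} and used as a black box in the proof of Theorem~\ref{thm8.2}. So there is no ``paper's proof'' to compare against.

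That said, your outline is the correct one --- it is essentially the Bell--Ghioca--Tucker argument from the cited paper. The spreading-out step, the reduction modulo a well-chosen prime to a fixed residue disc with unipotent differential, the $p$-adic analytic interpolation of the orbit via a convergent flow, and the final appeal to Strassman's theorem are exactly the ingredients of their proof. Your identification of the interpolation step as the heart of the matter is also right; the \'etaleness hypothesis enters precisely to keep the orbit inside a single formal polydisc on which the flow can be built. If anything, one could add that the passage from ``$f \equiv \mathrm{id} \pmod p$ on the disc'' to ``$f$ is the time-one map of a convergent $p$-adic flow'' is itself a nontrivial lemma (this is where Bell's earlier work is invoked in \cite{BGT10}), but as a sketch your proposal is sound.
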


Using this theorem,
we can obtain a sharper description of the intersection $O_f(x) \cap O_g(y)$ 
if we assume that $f,g$ are \'etale.

\begin{thm}\label{thm8.2}
Let $X$ be a smooth projective variety and $f,g$ \'etale endomorphisms on $X$ 
such that $\delta_f=\delta_g>1$ and $l_f=l_g$.
Assume that Conjecture \ref{conj_main} holds for $f$ and $g$.
Take a dense $f$-orbit $O_f(x)$ and  a dense $g$-orbit $O_g(y)$.
Then the set $\{ (n,m) \in (\mathbb Z_{\geq 0})^2 \mid f^n(x)=g^m(y) \}$ 
is a finite union of sets of the form $\{ (kn+i,kn+j) \}_{n=0}^\infty$ 
for some $k,i,j \in \mathbb Z_{\geq 0}$.
\end{thm}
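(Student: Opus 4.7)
The plan is to combine Theorem \ref{thm8.1} with the dynamical Mordell--Lang theorem for \'etale maps (Theorem \ref{thm_BGT}) applied to the product dynamical system on $X \times X$.

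First I would invoke Theorem \ref{thm8.1} to obtain an integer $B \geq 0$ such that every solution $(n,m)$ of $f^n(x) = g^m(y)$ satisfies $|n - m| \leq B$. This decomposes the solution set $S := \{(n,m) \in (\mathbb Z_{\geq 0})^2 \mid f^n(x) = g^m(y)\}$ into finitely many pieces indexed by $c = m - n \in \{-B, \ldots, B\}$, so it suffices to show that each slice $S_c := \{(n,m) \in S \mid m - n = c\}$ is a finite union of sets of the desired form.

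Next I would introduce the product endomorphism $h := f \times g$ on $X \times X$. Since $f$ and $g$ are \'etale, so is $h$, and the diagonal $\Delta \subset X \times X$ is a closed subvariety because $X$ is projective, hence separated. For $c \geq 0$, set $z := g^c(y)$; then $(n, n+c) \in S_c$ if and only if $h^n(x,z) \in \Delta$. For $c < 0$, analogously set $w := f^{-c}(x)$ and observe that $(m-c, m) \in S_c$ if and only if $h^m(w,y) \in \Delta$. In either case $S_c$ is put in bijection with the return-time set of a point under $h$ to the closed subvariety $\Delta$.

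Finally I would apply Theorem \ref{thm_BGT} to each of these return-time sets, which expresses $\{n \geq 0 \mid h^n(\cdot) \in \Delta\}$ as a finite union of arithmetic progressions $\{kn + i\}_{n=0}^\infty$; translating back via $m = n + c$ (or $n = m - c$) produces progressions of the form $\{(kn+i, kn+j)\}_{n=0}^\infty$ with $j = i + c$ (resp.\ $i = j - c$). Taking the union over the finitely many values of $c$ yields the theorem. The main substantive input is Theorem \ref{thm8.1}, which provides the bound on $|n-m|$ reducing a two-parameter problem to a one-parameter one; once that is in hand, the rest is essentially a direct translation of the \'etale dynamical Mordell--Lang theorem to the product dynamical system, so I do not anticipate a genuine obstacle beyond a careful bookkeeping of the re-indexing in the two cases $c \geq 0$ and $c < 0$.
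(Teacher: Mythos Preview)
Your proposal is correct and follows essentially the same argument as the paper's proof: invoke Theorem \ref{thm8.1} to bound $|n-m|$, split into finitely many diagonals, and on each diagonal rewrite the coincidence condition as a return-time set for the \'etale product map $f\times g$ on $X\times X$ into the diagonal $\Delta$, then apply Theorem \ref{thm_BGT}. The only differences are notational (the paper indexes by $l=|n-m|$ and treats the two signs separately, while you index by $c=m-n$), so there is nothing substantive to add.
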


\begin{rem}\label{rem8.2.1}
Theorem \ref{thm8.2} essentially says that the intersection of 
two orbits with same height growth has a nice form.
Sano \cite[Theorem 1.2]{San18} proved that 
the intersection of two orbits has a nice form under a weaker assumption on 
height growth of the orbits.
\end{rem}

\begin{proof}[Proof of Theorem \ref{thm8.2}]
Theorem \ref{thm8.1} implies that 
$N=\max \{ |n-m| \mid n,m \in \mathbb Z_{\geq 0},\ f^n(x)=g^m(y) \} < \infty$.
Fix $l \in \{0,1,\ldots,N\}$.
For $n \in \mathbb Z_{\geq 0}$,
$f^{n+l}(x)=g^n(y)$ if and only if $(f \times g)^n((f^l(x),y)) \in \Delta$, where 
$\Delta \subset X \times X$ is the diagonal set.
Moreover, $f \times g$ is an \'etale endomorphism on $X \times X$.
So Theorem \ref{thm_BGT} implies that 
$\{ (n+l,n) \mid n \in \mathbb Z_{\geq 0},\ f^{n+l}(x)=g^n(y) \}$ is a finite union of sets 
of the form 
$\{ (kn+i+l,kn+i)\}_{n=0}^\infty$ for some $k,i \in \mathbb Z_{\geq 0}$.
Similarly,
$\{ (n,n+l) \mid n \in \mathbb Z_{\geq 0},\ f^{n}(x)=g^{n+l}(y) \}$ is a finite union of sets 
of the form 
$\{ (kn+i,kn+i+l)\}_{n=0}^\infty$ for some $k,i \in \mathbb Z_{\geq 0}$.
Therefore 
\begin{align*}
&\{ (n,m) \in (\mathbb Z_{\geq 0})^2 \mid f^n(x)=g^m(y) \} \\
= &\bigcup_{l=0}^N \{ (n+l,n) \mid n \in \mathbb Z_{\geq 0},\ f^{n+l}(x)=g^n(y) \} \\
&\cup \bigcup_{l=0}^N \{ (n,n+l) \mid n \in \mathbb Z_{\geq 0},\ f^{n}(x)=g^{n+l}(y) \}
\end{align*}
is a finite union of sets of the form  $\{ (kn+i,kn+j)\}_{n=0}^\infty$ 
for some $k,i,j \in \mathbb Z_{\geq 0}$.
\end{proof}

Applying Theorem \ref{thm8.1} and Theorem \ref{thm8.2} 
to the endomorphisms on the varieties which we have considered, 
we obtain the following as an application of Theorem \ref{thm_main}.

\begin{thm}
Let $X$ be a smooth projective variety and $f,g$ endomorphisms on $X$ such that $\delta_f=\delta_g>1$ and $l_f=l_g$.
We assume one of the following: 
\begin{itemize}
\item 
$f^*H \equiv \delta_f H$ and  $g^*H' \equiv \delta_g H'$ 
for some ample $\mathbb R$-divisors $H, H'$ on $X$,
\item 
$\rho(X) \leq 2$ and $f,g$ are automorphisms,
\item 
$X$ is an abelian variety, or
\item 
$X$ is a smooth projective surface.
\end{itemize}
Take a dense $f$-orbit $O_f(x)$ and  a dense $g$-orbit $O_g(y)$.
Then the set $\{ |n-m| \mid n,m \in \mathbb Z_{\geq 0},\ f^n(x)=g^m(y) \}$ is upper bounded.
Furthermore, if both $f$ and $g$ are \'etale, 
then the set $\{ (n,m) \in (\mathbb Z_{\geq 0})^2 \mid f^n(x)=g^m(y) \}$ 
is a finite union of sets of the form $\{ (kn+i,kn+j)\}_{n=0}^\infty$ 
for some $k,i,j \in \mathbb Z_{\geq 0}$.
\end{thm}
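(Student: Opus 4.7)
The plan is a bookkeeping argument: verify that the four listed conditions feed into Theorem \ref{thm_main} for both $f$ and $g$, then chain through the implication ``Conjecture \ref{conj_zf} $\Rightarrow$ Conjecture \ref{conj_main}'' so that Theorem \ref{thm8.1} and Theorem \ref{thm8.2} may be applied directly. Since the hypothesis $\delta_f = \delta_g > 1$ gives $\delta_f > 1$ and $\delta_g > 1$ individually, the dynamical degree assumption required throughout Theorem \ref{thm_main} is immediate.

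First I would observe that each bullet point matches one case of Theorem \ref{thm_main}: the polarized case with ample $H$ (resp.\ $H'$) for $f$ (resp.\ $g$); the $\rho(X)\le 2$ automorphism case applied to each of $f$ and $g$; the abelian variety case; and the surface case (covered by Theorem \ref{thm6.6} and Theorem \ref{thm7.7}, with no need to separate automorphisms from non-trivial endomorphisms). In each case Theorem \ref{thm_main} yields Conjecture \ref{conj_zf} for $f$ and for $g$.

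Next I would invoke the implication recorded in the introduction (in the paragraph immediately after the statement of Conjecture \ref{conj_main}): if $O_f(x)$ is Zariski dense and $K$ is a number field with $O_f(x) \subset X(K)$, then $x \in Z_f(K)$ would force $O_f(x) \subset Z_f(K)$ by the $f$-invariance of $Z_f$ established in Proposition \ref{prop3.2}(iii), contradicting Conjecture \ref{conj_zf} for $f$; hence $\underline h_f(x) > 0$. The same argument applied to $g$ yields $\underline h_g(y) > 0$. Thus Conjecture \ref{conj_main} holds for both $f$ and $g$.

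With Conjecture \ref{conj_main} in hand for both maps and with the matching assumption $(\delta_f,l_f) = (\delta_g,l_g)$ provided by hypothesis, the first conclusion is an immediate application of Theorem \ref{thm8.1}. Under the additional assumption that $f$ and $g$ are \'etale, the second conclusion is an immediate application of Theorem \ref{thm8.2}. There is no genuine obstacle here, since the theorem is essentially a corollary packaging Theorem \ref{thm_main} together with the two abstract intersection results; the only point worth double-checking is that the $l_f = l_g$ assumption is a hypothesis (not derived), so that the equality needed to invoke Theorems \ref{thm8.1}--\ref{thm8.2} is given rather than something one has to establish from the listed geometric conditions.
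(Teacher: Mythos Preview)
Your proposal is correct and matches the paper's approach exactly: the paper states this theorem immediately after Theorem~\ref{thm8.2} with only the remark that it follows by applying Theorems~\ref{thm8.1} and~\ref{thm8.2} to the cases covered by Theorem~\ref{thm_main}, and your write-up simply makes explicit the intermediate step (Conjecture~\ref{conj_zf} $\Rightarrow$ Conjecture~\ref{conj_main}) that the paper leaves implicit.
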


%


\end{document}